\newtheorem{lemma}{Lemma}[section]
\newtheorem{proposition}[lemma]{Proposition}
\newtheorem{theorem}[lemma]{Theorem}
\newtheorem{corollary}[lemma]{Corollary}
\newtheorem{conjecture}[lemma]{Conjecture}
\theoremstyle{definition}
\newtheorem{remark}[lemma]{Remark}
\newtheorem{definition}[lemma]{Definition}
\newtheorem{example}[lemma]{Example}
\DeclareMathOperator{\id}{id}
\DeclareMathOperator{\chr}{char}
\DeclareMathOperator{\gl}{gl}
\DeclareMathOperator{\im}{im}
\DeclareMathOperator{\Kh}{Kh}
\DeclareMathOperator{\Sq}{Sq}
\DeclareMathOperator{\Tor}{Tor}
\newcommand{\Z}{\mathbb{Z}}
\newcommand{\Q}{\mathbb{Q}}
\newcommand{\F}{\mathbb{F}}
\newcommand{\K}{\mathbb{K}}
\newcommand{\BN}{H_{\mathrm{BN}}}
\newcommand{\tBN}{\tilde{H}_{\mathrm{BN}}}
\newcommand{\CBN}{C_{\mathrm{BN}}}
\newcommand{\tCBN}{\tilde{C}_{\mathrm{BN}}}
\newcommand{\CKh}{C_{\Kh}}
\newcommand{\Rone}{\mathbb{Z}}
\newcommand{\Rmor}[1]{\mathbb{Z}^{#1}}
\newcommand{\Tone}[1]{\mathbb{Z}/#1 \mathbb{Z}}
\newcommand{\Tmor}[2]{(\mathbb{Z}/#1 \mathbb{Z})^{#2}}
\begin{document}
\parindent0em
\setlength\parskip{.1cm}
\thispagestyle{empty}
\title{On an integral version of the Rasmussen invariant}
\author[Dirk Sch\"utz]{Dirk Sch\"utz}
\address{Department of Mathematical Sciences\\ Durham University\\ United Kingdom}
\email{dirk.schuetz@durham.ac.uk}

\begin {abstract}
We define a Rasmussen $s$-invariant over the coefficient ring $\Z$, and show how it is related to the $s$-invariants defined over a field. A lower bound for the slice genus of a knot arising from it is obtained, and we give examples of knots for which this lower bound is better than all lower bounds coming from the $s$-invariants over fields. We also compare it to the Lipshitz-Sarkar refinement related to the first Steenrod square.
\end {abstract}

\maketitle

\section{Introduction}
The Rasmussen $s$-invariant \cite{MR2729272} is one of the most important and studied objects arising from Khovanov homology \cite{MR1740682}. Originally defined over $\Q$ only, it was quickly realized how it could be defined over fields of any characteristic. Nevertheless, it took a bit of time before Seed found examples where the $s$-invariant of a knot differs for fields of different characteristic. More such examples have been found, but until recently the only characteristic where there was a difference to the original $s$-invariant was characteristic $2$.

In \cite{LewarkZib} Lewark and Zibrowius found the first example of a knot $K$ such that $s^\Q(K)\not=s^{\F_3}(K)$, where $\F_3$ is the field of three elements. This example is a Whitehead double of the $(3,4)$-torus knot. In fact, a Whitehead double of the $(2,3)$-torus knot has this property over the field of two elements, so one may guess that for $p>3$ the $(p,p+1)$-torus knot gives rise to examples with $s^\Q(K)\not=s^{\F_p}(K)$ via Whitehead doubles. At the moment this seems to be out of reach for computer calculations, and we know of no further examples with $s^\Q(K)\not=s^{\F_p}(K)$ for $p\not=2,3$.

In order to deal with the $s$-invariants over all characteristics it seems natural to try to define an $s$-invariant over $\Z$, and in this paper we propose such.a definition. Unlike over a field, where $s^\F(K)\in 2\Z$, we define $s^\Z(K)\in 2\Z\times \Z^{\gl(K)}$, where $\gl(K)$ is a non-negative integer, and indeed, a knot invariant. In many cases $\gl(K)=0$, and then $s^\Z(K)\in 2\Z$ agrees with $s^\Q(K)$ (in fact, the $2\Z$-factor always coincides with $s^\Q(K)$). If $\gl(K)>0$, the entries in $\Z^{\gl(K)}$ correspond to orders of finite cyclic groups related to a spectral sequence.

Rasmussen's definition is based on a filtration of the Lee complex \cite{MR2173845}, but we will work over the Bar-Natan complex \cite{MR2174270}, see also \cite{MR2320159, MR3189434}. This complex has the advantage that it also works in characteristic $2$, and has a reduced version. The cohomology of the reduced Bar-Natan complex is concentrated in homological degree $0$, where it is of rank $1$. Over a field, the spectral sequence corresponding to the aforementioned filtration ends with exactly one group $E^{0,q}_\infty\not=0$, and this $q$ is the $s$-invariant.

We still have this spectral sequence over $\Z$, but the $E_\infty$-term may not be as well behaved, and there can be several $q$ with $E^{0,q}_\infty\not=0$. Nonetheless, for the largest such $q$ the group is infinite cyclic, while for all the other $q$'s the group is finite cyclic. Our $s^\Z(K)$ essentially just collects this information.

If a prime $p$ does not divide any of these orders of finite groups, we can show that
\begin{equation}\label{eq:ineq}
s^{\F_p}(K) \geq s^\Q(K).
\end{equation}
Using a mirror knot argument we can now easily see that for any knot $K$ we get
\[
s^{\F_p}(K) = s^\Q(K)
\]
for all but finitely many $p$.

One may ask whether (\ref{eq:ineq}) implies that $p$ does not divide any of the orders of the $E^{0,q}_\infty$. It turns out that in general this is not true. In fact, we get the following result.

\begin{theorem}\label{thm:firstmain}
There exists a knot $K$ with $s^\F(K) = 0$ for every field $\F$ and $\gl(K) = 1$. Furthermore, $K$ can be chosen so that its signature is $0$ and its determinant a square.
\end{theorem}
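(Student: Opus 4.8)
The plan is to produce the knot by an explicit search guided by the structure of the problem rather than by a clever closed-form construction. The quantity $\gl(K)$ detects torsion in the $E_\infty$-page of the Bar-Natan spectral sequence, and by the discussion preceding the theorem this torsion is invisible to every $s^{\F}(K)$ precisely when the relevant prime $p$ does divide one of the orders of $E^{0,q}_\infty$ --- so what we want is a knot whose Bar-Natan homology over $\Z$ carries a single piece of cyclic torsion that forces $\gl(K)=1$ while the field-valued $s$-invariants all collapse to $0$. The natural place to look is among knots whose rational (and $\F_p$) invariants are already known to vanish, i.e. knots that are topologically slice-like from the point of view of $s$, but which have interesting integral Khovanov homology. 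I would therefore first recall (or re-derive) the relation between $\gl(K)$, the orders of the finite cyclic $E^{0,q}_\infty$, and the integral Bar-Natan complex, so that $\gl(K)$ becomes something one can read off from a chain-level computation.

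Next I would turn to computation. Using a program that computes integral Khovanov and Bar-Natan homology together with the filtration spectral sequence (the same kind of machinery behind Seed's and Lewark--Zibrowius's examples), I would scan families of knots that are plausible candidates: pretzel knots, twisted doubles, and in particular knots already flagged in the literature as having $s^{\F_2}\neq s^{\Q}$ or unusual $\Z$-Khovanov torsion, since those are exactly the places where the $E_\infty$-page misbehaves. For each candidate I would check three things in turn: that $s^{\Q}(K)=0$ (equivalently the infinite-cyclic $E^{0,q}_\infty$ sits in $q=0$); that $\gl(K)=1$, i.e. there is exactly one extra $q$ with $E^{0,q}_\infty$ finite cyclic and nontrivial; and that $s^{\F_p}(K)=0$ for the finitely many primes $p$ dividing that order together with $p=2$ (the other primes being automatic by inequality~(\ref{eq:ineq}) applied to $K$ and its mirror). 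The signature-zero and square-determinant conditions I would impose as filters on the search: the signature is trivial to compute from the Seifert form, and the determinant is $|\Delta_K(-1)|$, so one simply restricts attention to knots passing both tests, and since these are ``generic-looking'' conditions there should be no tension between them and the Khovanov-theoretic requirements.

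The main obstacle, and the only genuinely delicate point, is verifying that $s^{\F}(K)=0$ for \emph{every} field $\F$, not merely for $\Q$ and a handful of small primes. The argument has two parts. For primes not dividing the order of the finite $E^{0,q}_\infty$ one uses (\ref{eq:ineq}): that inequality and its mirror version give $s^{\F_p}(K)=s^{\Q}(K)=0$, and since $s^{\F}$ depends only on the characteristic this handles all but finitely many fields at once. For the finitely many ``bad'' primes --- those dividing the torsion order, plus $p=2$ which always needs separate care because of $2$-torsion in Bar-Natan homology --- one must compute $s^{\F_p}(K)$ directly from the Bar-Natan complex over $\F_p$ and check it is $0$; this is a finite computation but it is where the example could fail, so the candidate must be chosen with this constraint in mind from the start. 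I would expect that keeping the torsion order small (ideally a single small prime like $2$ or $3$) makes this final check short, and that the bulk of the work is really the search rather than the verification. Once a knot survives all of these checks, recording its diagram, its signature, its determinant, and the table of $E_\infty^{0,q}$ completes the proof.
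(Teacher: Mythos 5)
Your plan is a reasonable high-level description of how one would go looking for such a knot, and the logic for controlling $s^{\F_p}$ away from the bad primes (inequality~(\ref{eq:ineq}) applied to $K$ and $\overline{K}$) is exactly right. But there is a genuine gap: you are proposing to find, by direct search, a single prime knot with $s^{\Q}(K)=0$, $\gl(K)=1$, and $s^{\F_p}(K)=0$ for the bad primes. The computations in the paper show this does \emph{not} happen among prime knots with up to $18$ crossings: the $25$ knots found with $\gl(K)=1$ and $s^{\F_2}(K)=s^{\Q}(K)$ all have $s^{\Q}(K)=2$, not $0$. The missing idea, and the actual engine of the paper's proof, is to take one of these knots and form the connected sum with a trefoil. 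Since $s^{\F}$ is additive under connected sum for every field, this shifts all $s^{\F}$-values simultaneously to $0$, while a separate computation confirms that $\gl(K\# T(2,3))=1$, the signature is $0$, and the determinant is $45^2$. Without that connected-sum step your search, at least within feasible crossing ranges, would come up empty.

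A secondary point: you suggest focusing on ``knots already flagged in the literature as having $s^{\F_2}\neq s^{\Q}$.'' That is the wrong pool for this theorem. You need $s^{\F}(K)=0$ for \emph{all} fields, hence in particular $s^{\F_2}(K)=s^{\Q}(K)$; the interesting phenomenon here is precisely that $\gl(K)$ can be positive even when every field-valued $s$-invariant agrees. So the candidates are knots whose $\Z$-graded Bar-Natan $E_\infty$-page carries torsion that is invisible to all $s^{\F_p}$, not knots where a field-level discrepancy is already known. Your discussion of the signature and determinant as filters is fine and matches the paper, which verifies both by direct computation for the chosen example.
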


So in general $s^\Z$ is a finer invariant than all of the field coefficient $s$-invariants combined. We also get that $s^\Z$ is a concordance invariant, and the following relation to the slice genus of a knot.

\begin{theorem}\label{thm:secondmain}
Let $K$ be a knot. If $K$ is smoothly slice, then $\gl(K) = 0$. In general, we get a lower bound for the slice genus $g_4(K)$ by
\[
g_4(K)\geq  |s^\Q(K)/2 - \gl(K)|.
\]
\end{theorem}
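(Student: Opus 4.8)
The plan is to run Rasmussen's cobordism argument over $\Z$, in the reduced Bar-Natan theory, keeping careful track of the filtration jumps produced by the torsion in the associated graded. By the discussion above, $\hBN(K)\cong\Z$ — freeness follows from the universal coefficient theorem together with the fact that the reduced cohomology is concentrated in degree $0$ over every field — and $\hBN(K)$ carries a bounded filtration whose associated graded is $\Z\oplus\bigoplus_{i=1}^{\gl(K)}\Tone{o_i}$, with the free summand in filtration degree $s^\Q(K)$ and the torsion summands in $\gl(K)$ distinct smaller degrees, all of the same parity as $s^\Q(K)$. For $0\neq x\in\hBN(K)$ let $s(x)$ be the largest $q$ with $x\in F^q\hBN(K)$. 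A generator $g$ of $\hBN(K)$ lies in the bottom filtration step but not the next one, so $s(g)\le s^\Q(K)-2\gl(K)$; writing $N=o_1\cdots o_{\gl(K)}$, the element $Ng$ generates $F^{s^\Q(K)}\hBN(K)$ (a subgroup of index $N$), so $s(Ng)=s^\Q(K)$. Finally, the reduced Bar-Natan complex of the $0$-crossing diagram gives $\hBN(U)\cong\Z$ with the one-step filtration in degree $0$, so $s(1_U)=0$.

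Next, with $g=g_4(K)$, realise a minimal genus surface in $B^4$ bounding $K$ as a connected genus-$g$ cobordism $\Sigma\colon U\to K$ in $S^3\times[0,1]$, and let $\bar\Sigma\colon K\to U$ be the reversed cobordism. Since $\Sigma$ is connected it carries a basepoint path, so it induces (up to sign and filtered homotopy) a map $\phi=\phi_\Sigma\colon\hBN(U)\to\hBN(K)$ filtered of degree $\chi(\Sigma)=-2g$, i.e.\ $s(\phi(x))\ge s(x)-2g$; likewise $\bar\Sigma$ induces $\psi\colon\hBN(K)\to\hBN(U)$ with $s(\psi(y))\ge s(y)-2g$. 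The composite $\psi\phi$ is induced by the genus-$2g$ cobordism $U\to U$ obtained by gluing $\Sigma$ and $\bar\Sigma$ along $K$; writing it as a cylinder with $2g$ handles and using that the Bar-Natan handle operator $m\circ\Delta$ is multiplication by $2X-1$ on $\Z[X]/(X^2-X)$ with $(2X-1)^2=1$, hence acts on the rank-one reduced homology of the unknot as $\pm\id$, we get $\psi\phi=\pm\id$ on $\hBN(U)\cong\Z$. Consequently, writing $\phi(1_U)=a\,g$ and $\psi(g)=b\,1_U$, we have $ab=\pm1$, so $a=b=\pm1$; in particular $\phi(1_U)=\pm g$ is a generator of $\hBN(K)$, not a proper multiple of one.

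Now combine the estimates. From $\phi$: $s(g)=s(\phi(1_U))\ge s(1_U)-2g=-2g$, and since $s(g)\le s^\Q(K)-2\gl(K)$ this yields $2\gl(K)-s^\Q(K)\le 2g$, i.e.\ $g_4(K)\ge\gl(K)-s^\Q(K)/2$. From $\psi$ applied to $Ng$: $0=s(\pm N\,1_U)\ge s(Ng)-2g=s^\Q(K)-2g$, i.e.\ $g_4(K)\ge s^\Q(K)/2$. As $\gl(K)\ge0$, these give $g_4(K)\ge\max\{s^\Q(K)/2,\ \gl(K)-s^\Q(K)/2\}\ge|s^\Q(K)/2-\gl(K)|$, as claimed (a mirror argument would also give $g_4(K)\ge|s^\Q(K)/2|$, but this is not needed). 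If moreover $K$ is slice, then $g=0$, so $\phi$ and $\psi$ are filtered maps of degree $0$ with $\psi\phi=\pm\id$ and $\phi\psi=\pm\id$; hence $\hBN(K)$ is isomorphic as a filtered group to $\hBN(U)\cong\Z$ placed in degree $0$, which forces $s^\Q(K)=0$ and $\gl(K)=0$.

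The step I expect to be the main obstacle is establishing the middle paragraph rigorously: one must check that the reduced Bar-Natan cobordism maps are well defined over $\Z$ and filtered of the stated degree (the sign and homotopy ambiguities cause no trouble, but the integral filtered refinement needs care), and in particular that the handle operator acts on the integral reduced Bar-Natan homology of the unknot as a unit, so that $\phi(1_U)$ really lands on a generator of $\hBN(K)$ rather than on $N$ times one — this is the point that makes the $\gl$-term appear. Granting that, the slice-genus bound is just the comparison of filtration levels above, with the inequality $s(g)\le s^\Q(K)-2\gl(K)$ carrying all the new information.
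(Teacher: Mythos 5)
Your proof is correct and supplies precisely the ``slight variation'' of the concordance argument that the paper itself leaves unwritten for Theorem~\ref{thm:genusbound}: running Rasmussen's cobordism argument in the filtered reduced Bar-Natan theory over~$\Z$, using that the Bar-Natan handle operator $2x-1$ squares to~$1$ so that the composite $\psi\phi$ is $\pm\id$ and hence $\phi(1_U)$ lands on a generator rather than a proper multiple, and then comparing the filtration levels of the generator $g$ (at $s^\Q(K)-2\gl(K)$) and of $Ng$ (at $s^\Q(K)$). The one remark worth making is that your two estimates actually yield the stronger inequality $g_4(K)\ge\max\{s^\Q(K)/2,\ \gl(K)-s^\Q(K)/2\}$, from which the stated bound $g_4(K)\ge|s^\Q(K)/2-\gl(K)|$ follows as you observe because $\gl(K)\ge 0$; and for the slice case, $\gl(K)=0$ can also be seen directly from the concordance invariance of $s^\Z$ (Theorem~\ref{thm:sliceinv}), which is the route the paper indicates.
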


A slightly different approach to get $s$-invariants over $\Z$ was suggested in \cite[\S 6]{MR3189434}. We believe that our approach essentially captures the information therein, even though \cite{MR3189434} works over the unreduced complex. Furthermore, \cite[Qn 6.2]{MR3189434} asks about the image of the homomorphism from the smooth concordance group $\mathcal{C}$ which sends a knot to all the $s$-invariants over prime fields. As a first approximation to this question we propose a slightly different homomorphism.

For every prime $p$ define a homomorphism $\Sigma_p\colon \mathcal{C}\to \Z$ by 
\[
\Sigma_p(K) =  \frac{s^\Q(K)-s^{\F_p}(K)}{2}.
\]
If $P$ is the set of all prime numbers, we can combine these to  a homomorphism $\Sigma\colon \mathcal{C}\to\bigoplus_{p\in P} \Z$.

\begin{conjecture}
\label{con:bigconj}
The homomorphism $\Sigma$ is surjective.
\end{conjecture}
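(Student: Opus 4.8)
A possible route to Conjecture~\ref{con:bigconj} runs as follows. Since $\Sigma$ is a group homomorphism, $\im\Sigma$ is a subgroup of $\bigoplus_{p\in P}\Z$, and such a subgroup is everything once it contains every standard generator $e_p$. Thus it suffices to produce, for each prime $p$, a knot $K_p$ with $\Sigma(K_p)=e_p$; equivalently, $s^{\F_p}(K_p)=s^\Q(K_p)-2$ while $s^{\F_q}(K_p)=s^\Q(K_p)$ for all primes $q\neq p$. (Additivity of $\Sigma$ under connected sum then gives surjectivity, and in fact permits cancellation, so one really needs only a generating set of images rather than the $e_p$ themselves.) By the mirror-knot argument together with~(\ref{eq:ineq}), the condition $s^{\F_q}(K_p)=s^\Q(K_p)$ for all $q\neq p$ is implied by the cleaner requirement that every torsion order occurring in the $E_\infty$-page of the Bar--Natan filtration spectral sequence of $K_p$, and of its mirror $\overline{K_p}$, be a power of $p$. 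The remaining condition $s^{\F_p}(K_p)=s^\Q(K_p)-2$ forces $\gl(K_p)\geq 1$, with one torsion order a positive power of $p$ and the $\F_p$-differentials arranged so that the single surviving $E^{0,q}_\infty$ over $\F_p$ has $q=s^\Q(K_p)-2$.

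For $p=2$ the candidate is a Whitehead double of the trefoil, and for $p=3$ it is the Whitehead double $D(T_{3,4})$ of Lewark and Zibrowius; for both, $s^{\F_p}\neq s^\Q$ is already known, so what remains is to check that the difference equals $2$ and that no prime other than $p$ appears as a torsion order of the relevant complexes --- plausibly a finite computation, at least for $p=2$. For $p>3$ the natural guess, as flagged in the introduction, is $K_p=D(T_{p,p+1})$, the positive-clasped untwisted Whitehead double of the $(p,p+1)$-torus knot, reducing the problem to: (a) $s^{\F_p}(K_p)=s^\Q(K_p)-2$; and (b) the Bar--Natan spectral sequences of $K_p$ and $\overline{K_p}$ carry only $p$-primary torsion.

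The crux, and the reason the conjecture is presently out of reach, is (a) and (b) for $p>3$: these knots are far too large for machine computation, and no knot with $s^{\F_p}\neq s^\Q$ is currently known for any such $p$. One would therefore want a structural substitute for a calculation --- most naturally a satellite/cabling formula for Bar--Natan homology and its filtration spectral sequence, computing the $E_\infty$-page, and hence all the $s^\F(K_p)$ at once, from Khovanov-type invariants of cables of the companion $T_{p,p+1}$; one could then try to identify the relevant torsion in $E^{0,q}_\infty$ explicitly, show it is $p$-primary, and argue that the unavoidable torsion at other primes in so large a satellite never reaches homological degree $0$ of the Bar--Natan spectral sequence. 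Even granting such a formula, this last point --- controlling \emph{every} prime $q\neq p$ at once --- is the delicate one; a more modest fallback would be to localise the whole $s$-invariant construction at a single prime $p$, so that only the $p$-primary part of the $E_\infty$-page is ever visible, which might establish that $\im\Sigma$ contains $m_p\,\Z\, e_p$ for some $m_p\neq 0$, leaving the sharpening to $m_p=\pm1$ as a final step.
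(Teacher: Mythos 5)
This statement is a \emph{conjecture}, and the paper offers no proof of it; indeed the paper states explicitly that ``at the moment we do not see a viable strategy to prove this conjecture.'' Your submission is honest on the same point --- it is a strategy outline that repeatedly flags the key steps as out of reach --- so there is no proof here to compare against, but the reduction you set up is sound and does match the paper's informal remarks: reduce surjectivity of $\Sigma$ to producing, for each prime $p$, a knot hitting (a generating set including) $e_p$; use additivity under connected sum; control the other primes via the mirror argument and the graded-$p$-torsion-free criterion of Lemma~\ref{lm:sinequ}; and take Whitehead doubles of torus knots as the candidates. Your final paragraph about needing a satellite/cabling formula for the Bar--Natan filtration, and the difficulty of excluding all primes $q\neq p$ at once, is a reasonable diagnosis of why this remains open.

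One concrete inaccuracy: you propose the \emph{untwisted} positive-clasped Whitehead double $D(T_{p,p+1})$. The paper's own Conjecture in \S6.2, which it notes ``would prove Conjecture~\ref{con:bigconj},'' takes $K_p=W_+(T(p,p+1),p^2-1)$ with $p^2-1$ twists, and this choice is forced by the known cases: the Lewark--Zibrowius example is $W_+(T(3,4),8)$ with $8=3^2-1$ twists, and the analogous $p=2$ knot is $W_+(T(2,3),3)$. The twist parameter shifts where in the filtration the relevant torsion sits, so ``untwisted'' would not give the right candidate. Beyond that, your claim that $s^{\F_p}-s^\Q$ equals exactly $-2$ (so that $\Sigma(K_p)=e_p$ on the nose) is itself conjectural even for $p=2,3$; as you note, one only really needs a generating set, but some care is needed that the difference is nonzero and the image is not a proper subgroup. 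The substance of the conjecture, and the reason it is stated as such, is precisely the absence of any structural argument controlling the $E_\infty$-page of the Bar--Natan spectral sequence for these large satellites; your proposal does not close that gap, and the paper does not claim to either.
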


Evidence for Conjecture \ref{con:bigconj} is scant, but the already mentioned examples of Whitehead doubles of certain torus knots make it seem plausible. At the moment we do not see a viable strategy to prove this conjecture.

The paper is organized as follows. In Section 2 we recall the definition of the Bar-Natan complex and the definition of the $s$-invariants. In Section 3 we consider the reduced version of the Bar-Natan complex, and obtain some splitting results for the unreduced version. In particular, we show that the corresponding unreduced spectral sequence over $\F_2$ splits into twice the reduced version.

In Section 4 we define the integral $s$-invariant and prove its main properties. Section 5 deals with the Lipshitz-Sarkar refinement of $s^{\F_2}$ involving the first Steenrod square, and in the final section we list the results of some extensive calculations, together with a few conjectures.

{\bf Acknowledgements.} the author would like to thank Lukas Lewark for valuable comments and discussions, and information on computations.

\section{The Bar-Natan Complex}

Let $\K$ be a commutative field with $1$, and let
\[
A= \K[x]/\langle x^2-x\rangle.
\]
As in \cite{MR2320159} we have a Frobenius system $(A,\iota, \Delta, \varepsilon)$ given by the co-multiplication $\Delta\colon A \to A\otimes A$
\[
\Delta(1) = 1\otimes x + x \otimes 1 - 1\otimes 1, \hspace{1cm} \Delta(x) = x\otimes x,
\]
the unit $\iota\colon \K \to A$ given by inclusion, and a co-unit $\varepsilon\colon A\to \K$ sending $x$ to $1$ and $1$ to $0$.

Given a link diagram $D$ we can use this Frobenius system to get a cochain complex, compare \cite{MR2232858}, and following \cite{MR3189434} we call it the {\em Bar-Natan complex} $\CBN^\ast(D;\K)$. The underlying cochain groups agree with the usual Khovanov complex $\CKh^\ast(D;\K)$, and are therefore bi-graded. The Bar-Natan differential does not preserve the $q$-grading, but also does not decrease it. We therefore get a filtration 
\[
\cdots \subset \mathcal{F}_{q+2} \subset \mathcal{F}_q\subset \mathcal{F}_{q-2} \subset \cdots \subset \CBN^\ast(D;\K),
\]
where
\[
\mathcal{F}_q = \bigoplus_{j\geq q} \CKh^{\ast, j}(D;\K).
\]
A thorough proof that $\BN^\ast(D;\K)$ is free over $\K$ with a total rank of $2^c$, where $c$ is the number of components of the link can be found in \cite{MR4079621}. Furthermore, for a knot diagram the two generators are in homological degree $0$.

We will need specific generators below, so we are going to describe the generators that go back to \cite{MR2173845}. First note that $A$ is a free $\K$-module of rank $2$, with a basis given by $1, x\in A$. We can get a new basis $x_-, x_\shortmid$ by setting
\[
x_- = x, \hspace{1cm} x_\shortmid = 1 - x.
\]
Multiplication and co-multiplication diagonalize in this basis, see \cite[(2.1)]{MR3189434}, which is the reason the cohomology is so well-behaved, compare \cite{MR4079621}.

 Define $I'\colon A \to A$ by
\[
I'(1) = 1 \hspace{1cm} I'(x) = 1 - x.
\]
In terms of the basis $x_-, x_\shortmid$ this simply permutes these two elements. This is not quite an involution of the Frobenius system $(A, \iota, \Delta, \varepsilon)$, but we get an isomorphism between $(A, \iota, \Delta, \varepsilon)$ and $(A, \iota, \Delta', \varepsilon')$, where
\[
\Delta'(a) = \Delta(-a), \hspace{1cm}\varepsilon'(a) = \epsilon(-a).
\]
This is the result of twisting the original Frobenius system by the unit $-1\in A$. If we denote the corresponding link complex $C^\ast(D;\K)$, we get an isomorphism $\Theta$ between this complex and the Bar-Natan complex, see \cite[Prop.3]{MR2232858}. With this isomorphism we now get an involution $I\colon \CBN^\ast(D;\K) \to \CBN^\ast(D;\K)$. 

In order to say something meaningful about this involution, we need to take a closer look at the isomorphism constructed in \cite{MR2232858}. But restricting this isomorphism to each direct summand $C_S$ corresponding to a smoothing $S$ is just multiplication by a power of $-1$. Furthermore, the isomorphism is obtained by starting with identity on the $0$-smoothing of $D$, then extending over the vertices over the cube. If a smoothing $S'$ is obtained from a smoothing $S$ by a merge, we use the same factor of the identity on $C_{S'}$ that is used on $C_S$, and if it is a split, we multiply the factor by an extra $-1$.


\begin{lemma}\label{lm:pushqup}
Let $D$ be a link diagram. Then there exists a sign $\varepsilon_j\in \{1, -1\}$ which only depends on $j \bmod 4$, such that the chain map
\[
\id+\varepsilon_j I\colon \CBN^\ast(D;\K)\to \CBN^\ast(D;\K)
\]
satisfies
\[
\id+\varepsilon_j I(\mathcal{F}_j)\subset \mathcal{F}_{j+2}.
\]
Furthermore, $\varepsilon_j = -\varepsilon_{j+2}$.
\end{lemma}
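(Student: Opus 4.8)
The plan is to split the involution $I$ according to its effect on the $q$-grading: write $I=\Phi+R$ with $\Phi$ the part that preserves $q$-degree and $R$ the part that strictly raises it, and then show that $\Phi$ acts on each homogeneous summand $\CKh^{\ast,j}(D;\K)$ as multiplication by a single sign depending only on $j\bmod 4$. Granting this, $\varepsilon_j$ is simply the negative of that sign.

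First I would record the $q$-degree decomposition of $I'$ on $A$: in the basis $\{1,x\}$ one has $I'=\phi+N$, where $\phi$ is the diagonal map $1\mapsto 1$, $x\mapsto -x$ (which preserves $q$-degree) and $N$ is $1\mapsto 0$, $x\mapsto 1$ (which raises $q$-degree by exactly $2$). Since $I$ restricted to the summand $C_S$ attached to a smoothing $S$ with $c=c(S)$ circles equals $\Theta_S\,(I')^{\otimes c}$ for the sign $\Theta_S$ described above (both $\Theta$ and the circle-wise $I'$ are diagonal with respect to the summands), expanding $(\phi+N)^{\otimes c}=\phi^{\otimes c}+(\text{terms containing at least one }N)$ gives $I=\Phi+R$, where $\Phi|_{C_S}=\Theta_S\,\phi^{\otimes c}$ preserves $q$-degree and $R$ raises it by at least $2$. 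In particular $R(\mathcal{F}_j)\subset\mathcal{F}_{j+2}$ and $I(\mathcal{F}_{j+2})\subset\mathcal{F}_{j+2}$.

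Next I would analyse $\Phi$ on $\CKh^{\ast,j}(D;\K)$. Inside a fixed vertex $u$ of the cube of resolutions the $q$-degree of a standard generator determines its number of $x$-labels, so every degree-$j$ generator there has the same number $\mu(u,j)$ of $x$-labels and $\Phi$ acts on this piece as multiplication by $(-1)^{\mu(u,j)}\Theta_u$. The decisive point is that this scalar is unchanged along every edge of the cube: along a merge $c$ drops by $1$ while $|u|$ rises by $1$, so $\mu(u,j)$ and $\Theta_u$ are both unchanged; along a split $c$ and $|u|$ both rise by $1$, so $\mu(u,j)$ changes by $1$ while $\Theta_u$ is multiplied by $-1$, and the product is again unchanged. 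As the cube is connected, $\Phi$ acts on all of $\CKh^{\ast,j}(D;\K)$ as multiplication by one sign $\sigma_j\in\{1,-1\}$. Comparing degrees $j$ and $j+2$ at a fixed vertex gives $\mu(u,j+2)=\mu(u,j)-1$ with $\Theta_u$ unchanged, hence $\sigma_{j+2}=-\sigma_j$, so $\sigma_{j+4}=\sigma_j$ and $\sigma_j$ depends only on $j\bmod 4$.

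Finally, set $\varepsilon_j:=-\sigma_j$. On $\CKh^{\ast,j}(D;\K)$ we then have $\id+\varepsilon_j\Phi=\id-\sigma_j(\sigma_j\,\id)=0$, so $(\id+\varepsilon_j I)(\CKh^{\ast,j})=\varepsilon_j R(\CKh^{\ast,j})\subset\mathcal{F}_{j+2}$; since $\CKh^{\ast,\bullet}(D;\K)$ is supported in $q$-degrees of a single parity, $\mathcal{F}_j=\CKh^{\ast,j}\oplus\mathcal{F}_{j+2}$, and combined with $I(\mathcal{F}_{j+2})\subset\mathcal{F}_{j+2}$ this yields $(\id+\varepsilon_j I)(\mathcal{F}_j)\subset\mathcal{F}_{j+2}$; moreover $\varepsilon_{j+2}=-\sigma_{j+2}=\sigma_j=-\varepsilon_j$. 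That $\id+\varepsilon_j I$ is a chain map is immediate from $I$ being one. The only genuine work is the sign bookkeeping, and the crux — the step I expect to be the main obstacle — is the observation that the factor $-1$ contributed by $\Theta$ at each split in the cube exactly compensates the flip in the parity of the number of $x$-labels, so that $\Phi$ acts globally by a scalar rather than merely summand-by-summand; once that is in place the rest is formal, and the whole argument is uniform in $\K$ (including characteristic $2$, where $\phi$ and all the $\Theta_S$ become the identity and the conclusion reduces to $\id+I\equiv 0$ on $\CKh^{\ast,j}$ modulo $\mathcal{F}_{j+2}$).
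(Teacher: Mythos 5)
Your proof is correct and follows essentially the same route as the paper: you isolate the $q$-degree-preserving part of $I$ on each cube summand and show that the scalar $(-1)^{\mu(u,j)}\Theta_u$ by which it acts is preserved along merges and splits, which is precisely the paper's argument that $(-1)^{e_S+f^j_S}$ depends only on $j$. The explicit $I=\Phi+R$ decomposition and the observation that $\Phi$ acts globally by a scalar is a mildly cleaner packaging of the same bookkeeping, not a different proof.
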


\begin{proof}
Clearly $u\pm I(u)\in \mathcal{F}_{j+2}$ for all $u\in \mathcal{F}_{j+2}$, so let us focus on the generators of the $C_S$ with $q$-degree $j$, where $S$ is a smoothing of the diagram. Let $d_S$ be the number of circles in $S$, and $e_S$ be an integer such that $\Theta|C_S = (-1)^{e_S}\id$.

Now let $u=\xi_1\otimes \cdots \otimes \xi_{d_S} \in C_S$ be a generator of $q$-degree $j$, where $\xi_i \in \{1,x\}$. For all of these generators of fixed $q$-degree $j$ the number of $x$ among the $\xi_i$ is the same, so denote this by $f^j_S$. Then
\begin{align*}
I(u) &= (-1)^{e_S} I'(\xi_1)\otimes \cdots\otimes I'(\xi_{d_S}) \\
&= (-1)^{e_S+f^j_S} (\xi_1\otimes \cdots \otimes \xi_{d_S} + v),
\end{align*}
where $v\in \mathcal{F}_{j+2}$. In particular
\[
u + (-1)^{e_S+f^j_S+1} I(u) \in \mathcal{F}_{j+2}.
\]
Every smoothing $S$ can be reached from the all $0$-smoothing $S_0$ by a finite sequence of surgeries. If $S$ is obtained from a smoothing $S'$ by a merge, then $(-1)^{e_S}=(-1)^{e_{S'}}$, and $f^j_S=f^j_{S'}$.

On the other hand, if $S$ is obtained from $S'$ by a split, we get $(-1)^{e_S} = -(-1)^{e_{S'}}$, but $f^j_S = f^j_{S'}+1$. Therefore $(-1)^{e_S+f^j_S+1}$ only depends on $j$, and we can use it for $\varepsilon_j$.

Since $f^j_S = f^{j+2}_S+1$ we get the remaining statements about the dependency of $\varepsilon_j$ on $j$.
\end{proof}

Let $\mathcal{O}$ be an orientation for the link diagram. There is a unique smoothing $S$ of $D$ with every circle in $S$ naturally oriented from $\mathcal{O}$. We can assign a mod $2$ invariant $i(C)$ to each circle $C$ in $S$, which is the number (mod $2$) of other circles in $S$ that separate it from the point at infinity, plus $1$ if the orientation on $C$ is clockwise. 

Now $S$ generates a direct summand $C_S\leq \CBN^\ast(D;\K)$ corresponding to a vertex in the cube. We get $C_S\cong A^{\otimes c}$, where $c$ is the number of circles in $S$, and we can define $s_\mathcal{O}\in C_S$ by
\[
s_\mathcal{O} = x_{C_1}\otimes \cdots \otimes x_{C_c},
\]
where $x_{C_k} = x_-$, if $i(C_k) = 0$, and $x_{C_k} = x_\shortmid$, if $i(C_k) = 1$.

It follows from \cite{MR4079621} that $\BN^\ast(D;\K)$ is freely generated by the cohomology classes of the $s_\mathcal{O}$, where $\mathcal{O}$ runs through the orientations of the link diagram.

We are now going to assume that $D$ is a knot diagram. The filtration on $\CBN^\ast(D;\K)$ induces a filtration on $\BN^0(D;\K)$ given by
\[
\BN^0(D;\K)_q = \im (H^0(\mathcal{F}_q;\K) \to \BN^0(D;\K)).
\]
Clearly, $\BN^0(D;\K)_q = 0$ for large $q$, and $\BN^0(D;\K)_q = \K\oplus \K$, but a crucial property of this filtration for a field $\F$ is that
\[
s^\F_{\max}(D) = \max \{ q \in 2\Z+1 \mid \BN^0(D;\F)_q \not= 0 \}
\]
and
\[
s^\F_{\min}(D) = \max \{ q \in 2\Z+1 \mid  \BN^0(D;\F)_q = \F \oplus \F \}
\]
satisfy
\begin{equation}\label{eq:nicejump}
s^\F_{\max}(D) = s^\F_{\min}(D) + 2,
\end{equation}
and these numbers do not depend on the diagram. A proof for all fields can be found in \cite{MR3189434}, but the result goes back to \cite{MR2729272}. We can therefore define the {\em Rasmussen invariant over $\F$} by
\[
s^{\F}(K) = s^\F_{\max}(D) - 1,
\]
where $K$ is a knot with a diagram $D$. Clearly this invariant only depends on the prime field, and we write $\Q$, and $\F_p$ for a prime number $p$, for these.

\begin{remark}
The proof of (\ref{eq:nicejump}) in \cite{MR3189434} distinguishes the cases of $\chr(\F) = 2$ or not. The case of $\chr(\F) \not= 2$ is reduced to the statement for the Lee-complex using \cite{MR2320159}, while the case $\chr(\F) = 2$ uses the involution $I$. Lemma \ref{lm:pushqup} can be used to extend the characteristic $2$ proof given in \cite{MR3189434} to arbitrary characteristic.
\end{remark}

\section{A reduced Bar-Natan complex}

One advantage of the Bar-Natan complex over the Lee complex is that we can form a reduced version of it. This was already observed in \cite{MR2286127} over $\F_2$, and in \cite[\S 3.3.2]{kotelskiy2019immersed} in general. See also \cite[\S 2]{MR3458146}.

Putting a basepoint on the link diagram $D$ turns $\CBN^\ast(D;\K)$ into a free $A$-cochain complex. The action of $x\in A$ on the direct summand $C_S$ corresponding to a smoothing $S$ is given by multiplying the $A$-factor in the tensor product $A^{\otimes c}=C_S$ belonging to the circle containing the basepoint, by $x$.

We now define
\[
\tCBN^\ast(D;\K) = x\CBN^\ast(D;\K) = \im (x \colon \CBN^\ast(D;\K) \to \CBN^\ast(D;\K)).
\]
\begin{proposition}\label{prp:sesred}
Let $D$ be a link diagram and $\K$ a commutative ring. Then there is a short exact sequence of cochain complexes
\[
0\longrightarrow \tCBN^\ast(D;\K) \stackrel{i}{\longrightarrow} \CBN^\ast(D;\K) \stackrel{T}{\longrightarrow} \tCBN^\ast(D;\K)\longrightarrow 0
\]
where $i$ is inclusion, and $T$ is given by $T(u) = xI(u)$. Furthermore, this sequence splits as cochain complexes over $\K$.
\end{proposition}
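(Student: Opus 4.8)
\emph{Proof strategy.} The plan is to combine two ingredients: that choosing a basepoint makes $\CBN^\ast(D;\K)$ a cochain complex of free $A$-modules (the basepoint action commutes with the merge and split maps, hence with the Bar-Natan differential), and the cube-wise description of the involution $I$ established in the proof of Lemma~\ref{lm:pushqup}, namely $I|_{C_S}=(-1)^{e_S}(I')^{\otimes c_S}$. The purely algebraic input is that $x=x_-$ and $1-x=x_\shortmid$ form complementary orthogonal idempotents of $A$, so that every $A$-module $M$ splits canonically as $M=xM\oplus(1-x)M$ with $xM=\ker(1-x\colon M\to M)$ and $(1-x)M=\ker(x\colon M\to M)$; this holds over an arbitrary commutative ring $\K$.

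First I would apply this to $M=\CBN^\ast(D;\K)$. Because the differential is $A$-linear, $\tCBN^\ast(D;\K)=x\CBN^\ast(D;\K)$ and $(1-x)\CBN^\ast(D;\K)$ are subcomplexes and $\CBN^\ast(D;\K)=\tCBN^\ast(D;\K)\oplus(1-x)\CBN^\ast(D;\K)$ as cochain complexes. In particular multiplication by $x$ is a cochain map $r\colon\CBN^\ast(D;\K)\to\tCBN^\ast(D;\K)$ with $r\circ i=\id$; once the sequence is shown exact, such a cochain retraction of $i$ furnishes the splitting over $\K$, so this part is essentially formal.

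The substance is exactness. Injectivity of $i$ is clear, and $T=x\circ I$ is a cochain map (a composition of the cochain map $I$ with multiplication by $x$) with image in $x\CBN^\ast(D;\K)=\tCBN^\ast(D;\K)$. The key point is that $I$ interchanges the two summands $\tCBN^\ast(D;\K)$ and $(1-x)\CBN^\ast(D;\K)$: since $I'$ is a $\K$-linear automorphism of $A$ that swaps $\K x_-$ and $\K x_\shortmid$, the formula $I|_{C_S}=(-1)^{e_S}(I')^{\otimes c_S}$ together with an inspection of the tensor factor belonging to the basepoint circle gives $I(x\,C_S)=(1-x)\,C_S$ and conversely, for every smoothing $S$. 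Summing over $S$ and using $I^2=\id$ then yields $\ker T=I\bigl((1-x)\CBN^\ast(D;\K)\bigr)=\tCBN^\ast(D;\K)=\im i$, while $T$ restricted to $(1-x)\CBN^\ast(D;\K)$ coincides with $I$ there and hence is an isomorphism onto $\tCBN^\ast(D;\K)$, which gives surjectivity of $T$ (equivalently, the explicit cochain section $s=I|_{\tCBN^\ast(D;\K)}$).

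The only genuinely non-formal matter is the interplay between the $A$-module structure, which produces the decomposition and the retraction $r$, and the involution $I$, which is merely $\K$-linear — this is precisely why $T$ is not $A$-linear and why the statement is phrased over $\K$ — yet exchanges the two pieces of that decomposition. A secondary point to treat with care is verifying that the basepoint action commutes with the Bar-Natan differential, so that $x\CBN^\ast(D;\K)$ and $(1-x)\CBN^\ast(D;\K)$ are honest subcomplexes; this is the standard fact underpinning the reduced theory and is checked directly on merges and splits.
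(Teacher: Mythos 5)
Your proof is correct, but it takes a genuinely different route from the paper's. You organize everything around the orthogonal idempotents $x$ and $1-x$ of $A$, giving the canonical decomposition $\CBN^\ast(D;\K)=x\CBN^\ast(D;\K)\oplus(1-x)\CBN^\ast(D;\K)$ of cochain complexes, and then observe (smoothing by smoothing, via $I|_{C_S}=(-1)^{e_S}(I')^{\otimes c_S}$) that $I$ interchanges the two summands. Once that is in place, exactness and the splitting are entirely formal: $\ker T=I^{-1}\bigl((1-x)\CBN^\ast\bigr)=x\CBN^\ast=\im i$, and $T$ restricted to $(1-x)\CBN^\ast$ equals $I$ there, hence is an isomorphism onto $\tCBN^\ast$. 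The paper instead argues directly: it checks $T\circ i=0$ by a one-line computation, produces the explicit section $S(xu)=xu+I(xu)$ to get surjectivity and the splitting, and then identifies the kernel by writing its elements as $u-xI(u)-I(xI(u))$ and checking on standard basis elements of $C_S$ that these land in $x\CBN^\ast$. Note that the two arguments produce different splittings of the sequence (your complement to $\im i$ is $(1-x)\CBN^\ast$, i.e.\ the section is $I|_{\tCBN^\ast}$ or equivalently the retraction is multiplication by $x$; the paper's complement is the image of $S$), though of course both are valid. Your version buys conceptual economy --- no case-by-case check on basis elements is needed once the idempotent decomposition and the $I$-swap are established --- at the cost of first setting up the $A$-module splitting and verifying the swap. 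One small presentational point: you should flag explicitly that $I$ is only $\K$-linear, not $A$-linear (indeed $I(xu)\ne xI(u)$ in general), so the swap of the two $A$-submodules is a genuine observation and not an instance of $A$-linearity; you do gesture at this at the end, and it is exactly why the splitting is only asserted over $\K$.
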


Over $\F_2$ this result can also be obtained from \cite{MR3482492}.

\begin{proof}
If $xu\in \tCBN^\ast(D;\K)$, then $I(xu) = (1-x)v$ for some $v\in \CBN^\ast(D;\K)$. Therefore $xI(xu) = x(1-x)v = 0$, and $T\circ i = 0$. To see that $T$ is surjective, define $S\colon \tCBN^\ast(D;\K)\to\CBN^\ast(D;\K)$ by $S(xu) = xu+I(xu)$. Then
\[
T(S(xu)) = xI(xu) +xI(I(xu)) = 0 + x(xu) = xu,
\]
so the sequence splits. It remains to show that $\ker T = \tCBN^\ast(D;\K)$. From the splitting we get that the elements of the kernel are given by $u - xI(u) - I(x I(u))$. We claim that these elements are of the form $xv$ for some $v\in \CBN^\ast(D;\K)$. To see this assume that $u$ is a standard basis element of $\CBN^\ast(D;\K)$. 
We can write $u = 1\otimes s$ or $x\otimes s$, where the first element of the tensor product refers to the component of the smoothing containing the basepoint, and $s\in A^{\otimes k-1}$ where $k$ is the number of components in the smoothing. If $u = 1\otimes s$, then $I(u) = \pm1\otimes I'(s)$, where the sign only depends on the position of the smoothing in the cube. Then
\begin{align*}
u-xI(u) -I(x I(u)) & = u  \pm x \otimes I'(s) - I(\pm x\otimes I'(s)) \\
& = u  \pm x\otimes I'(s) - (1-x)\otimes s  = x\otimes s\pm I'(s).
\end{align*}
If $u = x\otimes s$, then
\[
u-xI(u) - I(x I(u)) = u - 0 - 0 = x\otimes s.
\]
In both cases, $u-xI(u)-I(x I(u))\in \tCBN^\ast(D;\K)$.
\end{proof}

Note that if $\mathcal{O}$ is the choice of an orientation for $D$, then either $s_\mathcal{O}$ is in $\tCBN^\ast(D;\K)$ or $-\mathcal{O}$ is, the orientation obtained by reversing the direction on all components. Since $I(s_\mathcal{O}) = \pm s_{-\mathcal{O}}$, we get for $s_\mathcal{O}\in \tCBN^\ast(D;\K)$ that $T(s_{-\mathcal{O}}) = \pm s_\mathcal{O}$, and the splitting behaves on cohomology the way we expect, namely, we get free $\K$-modules of rank $2^{c-1}$, with $c$ the number of components in $D$.

We get a filtration 
\[
\cdots \subset \tilde{\mathcal{F}}_{q+1} \subset \tilde{\mathcal{F}}_{q-1} \subset \cdots \subset \tCBN^\ast(D;\K),
\]
by setting
\[
\tilde{\mathcal{F}}_{q+1} = x\mathcal{F}_{q+2}.
\]
The short exact sequence of Proposition \ref{prp:sesred} restricts to a short exact sequence
\[
0\longrightarrow \tilde{\mathcal{F}}_{q+1} \longrightarrow \mathcal{F}_q \longrightarrow \tilde{\mathcal{F}}_{q-1} \longrightarrow 0.
\]
If $\K$ has characteristic $2$, the splitting $S\colon \tCBN^\ast(D;\K)\to\CBN^\ast(D;\K)$ given by $S(xu) = xu+I(xu)$ also restricts to $S\colon \tilde{\mathcal{F}}_{q-1} \to \mathcal{F}_q$ by Lemma \ref{lm:pushqup}. We therefore get

\begin{theorem}\label{thm:splitspec}
Let $D$ be a link diagram. Then there is an isomorphism of filtered cochain complexes
\[
(\CBN^\ast(D;\F_2), (\mathcal{F}_q))\cong (\tCBN^\ast(D;\F_2), (\tilde{\mathcal{F}}_{q+1})) \oplus (\tCBN^\ast(D;\F_2), (\tilde{\mathcal{F}}_{q-1})).
\]
In particular, the unreduced spectral sequence over $\F_2$ splits into two copies of the reduced spectral sequence. \hfill\qed
\end{theorem}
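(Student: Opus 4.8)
The plan is to obtain the splitting directly from Proposition~\ref{prp:sesred} and then to verify that it is filtered. Since the short exact sequence of Proposition~\ref{prp:sesred} is split by $S$, the composite $S\circ T$ is an idempotent endomorphism of $\CBN^\ast(D;\F_2)$ with image $S(\tCBN^\ast(D;\F_2))$ and kernel $\ker T=i(\tCBN^\ast(D;\F_2))$, so that $\CBN^\ast(D;\F_2)=i(\tCBN^\ast(D;\F_2))\oplus S(\tCBN^\ast(D;\F_2))$ and
\[
\phi\colon \tCBN^\ast(D;\F_2)\oplus\tCBN^\ast(D;\F_2)\longrightarrow\CBN^\ast(D;\F_2),\qquad \phi(a,b)=i(a)+S(b),
\]
is an isomorphism of cochain complexes; here $i$ is a chain map as an inclusion of a subcomplex, and $S$ is a chain map because $I$ is and the $A$-action commutes with the Bar-Natan differential. (This much is valid over any coefficient ring.) It therefore remains to see that $\phi$ is filtered, with $\mathcal{F}_q$ corresponding to $\tilde{\mathcal{F}}_{q+1}$ on the first summand and to $\tilde{\mathcal{F}}_{q-1}$ on the second.

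For the inclusion $\phi(\tilde{\mathcal{F}}_{q+1}\oplus\tilde{\mathcal{F}}_{q-1})\subseteq\mathcal{F}_q$ note that $i(\tilde{\mathcal{F}}_{q+1})=x\mathcal{F}_{q+2}\subseteq\mathcal{F}_q$, since multiplication by $x$ lowers the $q$-grading by at most two, while $S(\tilde{\mathcal{F}}_{q-1})\subseteq\mathcal{F}_q$ is precisely the restriction of $S$ recorded just before the theorem, which rests on Lemma~\ref{lm:pushqup} (in characteristic $2$ the sign $\varepsilon_j$ plays no role). For the reverse inclusion, take $u\in\mathcal{F}_q$ and write $u=i(a)+S(b)$ with $a,b\in\tCBN^\ast(D;\F_2)$; then $b=T(u)=xI(u)$. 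By Lemma~\ref{lm:pushqup} the map $\id+I$ carries $\mathcal{F}_q$ into $\mathcal{F}_{q+2}$, so in characteristic $2$ one has $u+I(u)\in\mathcal{F}_{q+2}$ and hence $I(u)=u+(u+I(u))\in\mathcal{F}_q$; therefore $b\in x\mathcal{F}_q=\tilde{\mathcal{F}}_{q-1}$, $S(b)\in\mathcal{F}_q$, and $a=u+S(b)\in\mathcal{F}_q$. Since $\tCBN^\ast(D;\F_2)=x\CBN^\ast(D;\F_2)$ is the span of those standard generators carrying $x$ on the based circle, it is a graded submodule, and a direct grading count gives $\tCBN^\ast(D;\F_2)\cap\mathcal{F}_q=x\mathcal{F}_{q+2}=\tilde{\mathcal{F}}_{q+1}$; thus $a\in\tilde{\mathcal{F}}_{q+1}$ and $u\in\phi(\tilde{\mathcal{F}}_{q+1}\oplus\tilde{\mathcal{F}}_{q-1})$. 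Hence $\phi^{-1}(\mathcal{F}_q)=\tilde{\mathcal{F}}_{q+1}\oplus\tilde{\mathcal{F}}_{q-1}$ for every $q$, which is the asserted isomorphism of filtered cochain complexes.

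For the final statement, the spectral sequence of a direct sum of filtered complexes is the direct sum of the spectral sequences, and re-indexing the filtration of $(\tCBN^\ast(D;\F_2),(\tilde{\mathcal{F}}_\bullet))$ by an even shift leaves the isomorphism type of its spectral sequence unchanged. As each summand is the reduced filtered complex up to such a shift, the unreduced spectral sequence over $\F_2$ is the direct sum of two copies of the reduced one.

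The routine parts are the chain-level splitting (already contained in Proposition~\ref{prp:sesred}) and the inclusion $\phi(\tilde{\mathcal{F}}_{q+1}\oplus\tilde{\mathcal{F}}_{q-1})\subseteq\mathcal{F}_q$ (already prepared in the discussion before the theorem). I expect the one genuinely delicate point to be the reverse filtration inclusion: deducing $I(u)\in\mathcal{F}_q$ in characteristic $2$ from Lemma~\ref{lm:pushqup}, and, above all, identifying $\tCBN^\ast(D;\F_2)\cap\mathcal{F}_q$ with $\tilde{\mathcal{F}}_{q+1}=x\mathcal{F}_{q+2}$ by keeping careful track of how multiplication by $x$ on the based circle shifts the $q$-grading.
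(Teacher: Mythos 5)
Your proof is correct and follows the same route as the paper: it combines the splitting $S$ from Proposition~\ref{prp:sesred} with the observation (valid in characteristic $2$ by Lemma~\ref{lm:pushqup}) that $S$ restricts to the filtration, so that the short exact sequence restricts to $0\to\tilde{\mathcal{F}}_{q+1}\to\mathcal{F}_q\to\tilde{\mathcal{F}}_{q-1}\to 0$ compatibly with the section. The paper presents this argument compressed into the two sentences preceding the theorem statement; your write-up makes explicit the same ingredients, in particular the check that $\tCBN^\ast(D;\F_2)\cap\mathcal{F}_q=\tilde{\mathcal{F}}_{q+1}$, which the paper leaves implicit in the claim that the sequence ``restricts.''
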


To fix our notation, and since we are mainly going to use the reduced version, we write the $n$-th page of the spectral sequence corresponding to $(\tBN^\ast(K;\K), \tilde{\mathcal{F}}_q)$ as $E^{\ast,\ast}_{n, \K}$ (thus omitting the knot from the notation). We use the convention that $E^{p,q}_{1, \K} = \tilde{H}_{\Kh}^{p,q}(K;\K)$.


\begin{definition}
Let $D$ be a knot diagram and $\F$ a field. Define
\[
\tilde{s}^\F(D) = \max\{ q \in 2\Z \mid \tBN^0(D;\F)_q \not = 0\}
\]
\end{definition}

\begin{proposition}\label{prp:reduceds}
Let $K$ be a knot and $\F$ a field. Then
\[
s^\F(K) = \tilde{s}^\F(D)
\]
for all diagrams $D$ of $K$.
\end{proposition}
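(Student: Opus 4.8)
The plan is to deduce the proposition from the identity
\[
\dim_\F\BN^0(D;\F)_q=\dim_\F\tBN^0(D;\F)_{q-1}+\dim_\F\tBN^0(D;\F)_{q+1},
\]
valid for every odd $q$. Granting it, observe that $\tBN^0(D;\F)$ is one-dimensional: the sequence of Proposition \ref{prp:sesred} splits, so $\BN^0(D;\F)\cong\tBN^0(D;\F)^{\oplus2}$, while $\BN^0(D;\F)\cong\F^2$; and its filtration is monotone, so $\tBN^0(D;\F)_j\neq0$ exactly for $j\le a$, where $a:=\tilde s^\F(D)$ is even. Substituting makes $\dim_\F\BN^0(D;\F)_q$ equal to $2$ for $q\le a-1$, to $1$ for $q=a+1$, and to $0$ for $q\ge a+3$; hence $s^\F_{\max}(D)=a+1$ and $s^\F(K)=s^\F_{\max}(D)-1=a=\tilde s^\F(D)$ for every diagram $D$ (this also re-proves \eqref{eq:nicejump}). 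When $\chr\F=2$ the identity is immediate from Theorem \ref{thm:splitspec}: applying $H^0(-)$ to the filtered isomorphism there and taking images in $\BN^0(D;\F)$ gives even $\BN^0(D;\F)_q=\tBN^0(D;\F)_{q+1}\oplus\tBN^0(D;\F)_{q-1}$.

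For $\chr\F\neq2$ I would use the eigenspace splitting of the involution $I$. Since $2$ is invertible and $I(\mathcal F_j)=\mathcal F_j$ for all $j$ (from the proof of Lemma \ref{lm:pushqup}), the idempotents $e_\pm=\tfrac12(\id\pm I)$ are filtered chain maps, so $\CBN(D;\F)=\CBN(D;\F)^+\oplus\CBN(D;\F)^-$ splits as a filtered complex, with $\mathcal F_q\cap\CBN(D;\F)^\pm=e_\pm\mathcal F_q$. Because $d$ is $A$-linear we also have $\CBN(D;\F)=x\CBN(D;\F)\oplus(1-x)\CBN(D;\F)$ as complexes, multiplication by $x$ is the projection onto $\tCBN(D;\F)=x\CBN(D;\F)$, and $I$ interchanges the two summands, which meet only in $0$. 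Hence $\phi_\pm\colon\tCBN(D;\F)\to\CBN(D;\F)^\pm$, $u\mapsto e_\pm(u)$, are injective chain maps, so chain isomorphisms by a dimension count; writing $\BN^0(D;\F)^\pm:=H^0(\CBN(D;\F)^\pm)$ with its induced $q$-filtration, each $\BN^0(D;\F)^\pm$ is one-dimensional and $\BN^0(D;\F)_q=\BN^0(D;\F)^+_q\oplus\BN^0(D;\F)^-_q$. The filtrations are related by the two-sided estimate
\[
\phi_+(\tilde{\mathcal F}_m)\subseteq\mathcal F_{m-1}\cap\CBN(D;\F)^+\subseteq\phi_+(\tilde{\mathcal F}_{m-2}),
\]
and likewise for $\phi_-$: the left inclusion because $\tilde{\mathcal F}_m=x\mathcal F_{m+1}\subseteq\mathcal F_{m-1}$, which $I$ and $e_+$ preserve; the right because every $w\in\mathcal F_{m-1}\cap\CBN(D;\F)^+$ equals $\phi_+(2xw)$ with $2xw\in x\mathcal F_{m-1}=\tilde{\mathcal F}_{m-2}$. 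Passing to $H^0(-)$ and images, the threshold $c^\pm:=\max\{q\mid\BN^0(D;\F)^\pm_q\neq0\}$ lies in $\{a-1,a+1\}$. Finally, Lemma \ref{lm:pushqup} gives $\bar I=-\varepsilon_j\cdot\id$ on $\mathcal F_j/\mathcal F_{j+2}$, so the associated graded of the filtration on $\CBN(D;\F)^+$ is supported in one of the two residue classes of odd integers modulo $4$, and that of $\CBN(D;\F)^-$ in the other; thus $c^+\not\equiv c^-\pmod4$. As $a-1$ and $a+1$ lie in distinct classes modulo $4$, we get $\{c^+,c^-\}=\{a-1,a+1\}$, whence $\dim_\F\BN^0(D;\F)_q$ indeed equals $2,1,0$ according as $q\le a-1$, $q=a+1$, $q\ge a+3$.

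I expect the crux to be this last part of the $\chr\F\neq2$ argument: $\phi_+$ is not homogeneous for the $q$-grading, so on its own it only delivers the ``wobble-by-$2$'' estimate above, and sharpening it to the equality we need genuinely uses the modulo-$4$ behaviour of the signs $\varepsilon_j$ of Lemma \ref{lm:pushqup} to separate the two $I$-eigenspaces of $\CBN(D;\F)$ by $q$-degree. In characteristic $2$ all of this is absorbed into the clean filtered splitting of Theorem \ref{thm:splitspec}.
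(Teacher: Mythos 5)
Your proof is correct, but it takes a genuinely different route from the paper's. The paper argues directly at the level of cocycles: starting from a cocycle $u\in\tilde{\mathcal F}_{\tilde s}$ cohomologous to $s_\mathcal{O}$, it observes that $u$ and $I(u)$ both lie in $\mathcal F_{\tilde s-1}$ and together generate $\BN^0(D;\F)$, giving $s\geq\tilde s$; conversely, from a cocycle $u\in\mathcal F_{s-1}$ representing $[s_\mathcal O]$ it forms $v=u\pm I(u)\in\mathcal F_{s+1}$ via Lemma \ref{lm:pushqup} and passes to $xv\in\tilde{\mathcal F}_s$, giving $\tilde s\geq s$. That argument presupposes (\ref{eq:nicejump}) and never decomposes the complex. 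You instead split $\CBN^\ast(D;\F)=\CBN^+\oplus\CBN^-$ into $I$-eigenspaces, identify each factor with $\tCBN^\ast(D;\F)$ via $e_\pm$, bound the filtration on $\CBN^\pm$ by the reduced filtration up to a wobble of $2$, and then pin down the offsets exactly using the mod-$4$ alternation of the signs $\varepsilon_j$. This is the precise characteristic-$\neq2$ analogue of Theorem \ref{thm:splitspec} (which over $\F_2$ gives a clean filtered splitting with offsets $\pm1$), and it buys you a self-contained re-derivation of (\ref{eq:nicejump}) — exactly the uniform argument the remark after (\ref{eq:nicejump}) alludes to — plus the extra structural fact that the two eigenspaces sit in complementary residue classes of odd $q$ modulo $4$. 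The paper's proof is shorter and avoids the eigenspace machinery, but your version makes the parallel with the $\F_2$ splitting explicit and is arguably cleaner to reuse. (One typo: in the line ``which $I$ and $e_+$ preserve'' for the $\phi_-$ case you mean $e_-$, but the computation $\phi_-(2xw)=xw+(1-x)w=w$ goes through just as for $\phi_+$.)
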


This is proven in \cite[Prop.3.7]{kotelskiy2019immersed}. We give a proof below as we are going to have use for the techniques later.

\begin{proof}
Let us write $\tilde{s} = \tilde{s}^\F(D)$ and $s = s^\F(K)$. Then $H^0(\tilde{\mathcal{F}}_{\tilde{s}};\F) \to \tBN^0(D;\F)$ is surjective. Since $\tBN^0(D;\F)\cong \F$ is generated by the cohomology class of $s_\mathcal{O}$, there is a cocycle $u\in \tilde{\mathcal{F}}_{\tilde{s}}$ with $u-s_\mathcal{O}$ a coboundary in $\tCBN^\ast(D;\F)$. This means $u\in \mathcal{F}_{\tilde{s}-1}$ is cohomologous to $s_\mathcal{O}$ in $\CBN^\ast(D;\F)$.
As $I$ preserves the filtration, we have $I(u)\in \mathcal{F}_{\tilde{s}-1}$ cohomologous to $s_\mathcal{-O}$ in $\CBN^\ast(D;\F)$.
But this implies that $H^0(\mathcal{F}_{\tilde{s}-1};\F)\to \BN^0(D; \F)$ is surjective, so that $s \geq \tilde{s}$.

We know that $\BN^0(D;\F)$ is freely generated by the cohomology classes of $s_\mathcal{O}$ and $s_{-\mathcal{O}}$. Since $H^0(\mathcal{F}_{s-1};\F)\to \BN^0(D;\F)$ is surjective, there exists a cocycle $u\in \mathcal{F}_{s-1}$ that represents the cohomology class of $s_\mathcal{O}$ in $\BN^0(D;\F)$. By Lemma \ref{lm:pushqup} we have $v = u\pm I(u)\in \mathcal{F}_{s+1}$, and as $I$ is a cochain map, $v$ is a cocycle, which represents the cohomology class of $s_\mathcal{O}\pm s_{-\mathcal{O}}$ in $\BN^0(D;\F)$, a non-zero element. 

Now $xv \in x\mathcal{F}_{s+1} = \tilde{\mathcal{F}}_s$ is cohomologous to $x s_\mathcal{O} \pm x s_{-\mathcal{O}} = s_{\mathcal{O}}$ in $\tCBN^0(D;\F)$. Since the cohomology class of $s_\mathcal{O}$ generates $\tBN^0(D;\F)$, we get $\tilde{s}\geq s$.
\end{proof}

\section{An integral approach to the Rasmussen invariant}

\begin{definition}
Let $C^\ast$ be a cochain complex of abelian groups, that has a filtration
\[
\cdots \subset \mathcal{F}_{q+2} \subset \mathcal{F}_q\subset \mathcal{F}_{q-2} \subset \cdots \subset C^\ast.
\]
The {\em induced filtration}
\[
\cdots H^\ast(C)_{q+2} \subset H^\ast(C)_q \subset H^\ast(C)_{q-2} \subset \cdots \subset H^\ast(C)
\]
on $H^\ast(C)$ is defined by
\[
H^\ast(C)_q = \im (H^\ast(\mathcal{F}_q) \to H^\ast(C))
\]
and the {\em associated graded cohomology groups} $(H^\ast(C)^{(q)})$ are defined by
\[
H^\ast(C)^{(q)} = H^\ast(C)_q / H^\ast(C)_{q+2}.
\]
\end{definition}

Notice that $H^{p}(C)^{(q)} = E^{p,q}_\infty$ for the corresponding spectral sequence.

So in the case of a knot diagram $D$ and a field $\F$ the only non-trivial groups among the associated graded cohomology groups of the Bar-Natan complex $\CBN^\ast(D;\F)$ are given by $\BN^0(D;\F)^{(s^\F\pm 1)}$, and both are $\F$. In the reduced case the only non-zero group is $\tBN^0(D;\F)^{(s^\F)}=\F$.

In the case of the Bar-Natan complex over $\Z$ we can use that $\Q$ is a localization of $\Z$, and therefore $\BN^0(D;\Z)^{(s^\Q\pm 1)}$ are the only ones with non-zero rank. We also get that $\BN^0(D;\Z)^{(q)} = 0$ for $q> s^\Q+1$, but for $q\leq s^\Q-1$ these groups can contain torsion. Because of the spectral sequence this can only occur if $H^{0,q}_{\Kh}(D;\Z)$ is non-zero.

\begin{remark}\label{rem:gradedreduced}
The reduced version is even simpler. We get that $q=s^\Q$ is the largest $q$ for which $H^0(\tilde{\mathcal{F}}_q;\Z) \to \tBN^0(D;\Z)$ is non-zero. Therefore $\tBN^0(D;\Z)^{(q)} = 0$ for $q>s^\Q$,
\[
\tBN^0(D;\Z)^{(s^\Q)} = \Z
\]
with this being the only such group of positive rank. Some of the $\tBN^0(D;\Z)^{(q)}$ for $q < s^\Q$ can be cyclic of finite order, while from some point onward they will be $0$.
\end{remark}

\begin{definition}
Let $C^\ast$ be a cochain complex of abelian groups together with a descending filtration
\[
\cdots \subset \mathcal{F}_{q+2} \subset \mathcal{F}_q\subset \mathcal{F}_{q-2} \subset \cdots \subset C^\ast.
\]
We say that $H^\ast(C^\ast)$ is {\em graded torsion free}, if none of the groups $H^\ast(C)^{(q)}$ contain elements of finite order. If $p$ is a prime number, we say that $H^\ast(C^\ast)$ is {\em graded $p$-torsion free}, if none of the $H^\ast(C)^{(q)}$ contain elements of order $p$.
\end{definition}

In the case of a knot, we would like to get that the graded cohomology groups of the Bar-Natan complex (both reduced and unreduced) do not depend on the diagram. This is indeed the case. To prove this, it appears to be easiest to check the Reidemeister moves via Gauss elimination, as in \cite[\S 9]{MR2320156}. Notice that rather than working with a tangle version, one can simply perform the Gauss eliminations in parallel on the Bar-Natan complex, compare \cite{MR4244204}. Each cancellation is filtration preserving, and therefore we get the desired result.

We can now define our integral $s$-invariant as follows.

\begin{definition}
Let $K$ be a knot. We define the {\em graded length}, $\gl(K)$, to be the largest integer $l$ such that $\tBN^0(K;\Z)^{(s^\Q(K)-2l)}$ is non-zero. The {\em integral $s$-invariant} is the tupel
\[
s^\Z(K) = (s^\Q(K), |\tBN^0(K;\Z)^{(s^\Q(K)-2)}|,\ldots, |\tBN^0(K;\Z)^{(s^\Q(K)-2\gl(K))}|),
\]
where $|G|$ of a finite group $G$ denotes its cardinality.
In case $\gl(K) = 0$ we simply write $s^\Z(K) = s^\Q(K)$.
\end{definition}

\begin{lemma}\label{lm:sinequ}
Let $K$ be a knot such that $\tBN^\ast(K;\Z)$ is graded $p$-torsion free for a prime number $p$. Then
\[
s^{\F_p}(K) \geq s^{\Q}(K).
\]
\end{lemma}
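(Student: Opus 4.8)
The statement to prove is $s^{\F_p}(K)\geq s^\Q(K)$ under the hypothesis that $\tBN^\ast(K;\Z)$ is graded $p$-torsion free. The natural strategy is to compare the $\Z$-coefficient picture with the $\F_p$-coefficient picture via a universal coefficient argument applied at the level of the filtration. Fix a diagram $D$ of $K$. By Remark \ref{rem:gradedreduced}, over $\Z$ we have $\tBN^0(D;\Z)^{(s^\Q)}=\Z$ and all higher associated graded groups vanish; the graded $p$-torsion freeness hypothesis says that for every $q$ the group $\tBN^0(D;\Z)^{(q)}$ has no $p$-torsion, so after tensoring with $\F_p$ (and accounting for $\Tor$ terms that land one filtration step away) the $\F_p$-associated graded group in degree $s^\Q$ survives nontrivially. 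By Proposition \ref{prp:reduceds}, $s^{\F_p}(K)=\tilde s^{\F_p}(D)=\max\{q\in 2\Z\mid \tBN^0(D;\F_p)_q\neq 0\}$, so it suffices to show $\tBN^0(D;\F_p)_{s^\Q}\neq 0$.

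The cleanest route to produce a nonzero class in $\tBN^0(D;\F_p)_{s^\Q}$ is to start from the integral generator and reduce it mod $p$. Over $\Z$, since $H^0(\tilde{\mathcal F}_{s^\Q};\Z)\to \tBN^0(D;\Z)$ hits a generator of $\Z$, there is a cocycle $u\in \tilde{\mathcal F}_{s^\Q}$ whose class in $\tBN^0(D;\Z)$ generates. First I would argue that the reduction $\bar u\in \tilde{\mathcal F}_{s^\Q}\otimes \F_p = \tilde{\mathcal F}_{s^\Q}$ (the filtered pieces are free, so this is unambiguous) represents a nonzero class in $\tBN^0(D;\F_p)$: this follows because $\tBN^0(D;\Z)=\Z$ is free, so $\tBN^0(D;\Z)\otimes\F_p=\F_p$ injects into $\tBN^0(D;\F_p)$ under the universal coefficient map (the $\Tor$ obstruction sits in homological degree $1$, not $0$), and $[\bar u]$ is the image of the generator. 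Hence $[\bar u]\neq 0$ in $\tBN^0(D;\F_p)$, it is represented by a cocycle in $\tilde{\mathcal F}_{s^\Q}$, and therefore $\tBN^0(D;\F_p)_{s^\Q}\neq 0$, giving $s^{\F_p}(K)=\tilde s^{\F_p}(D)\geq s^\Q=s^\Q(K)$.

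The subtle point — and where the graded $p$-torsion freeness hypothesis is genuinely needed rather than just the freeness of $\tBN^0(D;\Z)$ — is ensuring that $[\bar u]$ is not merely nonzero but actually lies in filtration level $s^\Q$ in the \emph{induced} filtration on $\tBN^0(D;\F_p)$, i.e. that nothing degenerates the filtration when passing to $\F_p$. The worry is that a $p$-torsion element in some $\tBN^0(D;\Z)_q$ for $q<s^\Q$ could, after reduction, merge with the free part and force $[\bar u]$ to be expressible by a cocycle of strictly higher filtration while simultaneously a \emph{different} class occupies level $s^\Q$ — but more to the point, one must rule out that the image $H^0(\tilde{\mathcal F}_q;\F_p)\to\tBN^0(D;\F_p)$ jumps. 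I would handle this by comparing the long exact sequences (from universal coefficients, applied to the subquotient complexes $\tilde{\mathcal F}_q$ and $\tilde{\mathcal F}_q/\tilde{\mathcal F}_{q+2}$) and tracking the associated graded groups: graded $p$-torsion freeness gives short exact sequences $0\to \tBN^0(D;\Z)^{(q)}\otimes\F_p\to \tBN^0(D;\F_p)^{(q)}\to \Tor(\tBN^1(D;\Z)^{(q)},\F_p)\to 0$ for the reduced theory, and since $\tBN^1(D;\Z)=0$ (reduced Bar-Natan cohomology of a knot is concentrated in degree $0$) the $\Tor$ term vanishes. Thus $\tBN^0(D;\F_p)^{(s^\Q)}\cong \tBN^0(D;\Z)^{(s^\Q)}\otimes\F_p=\F_p\neq 0$, which is exactly the statement $\tBN^0(D;\F_p)_{s^\Q}\neq\tBN^0(D;\F_p)_{s^\Q+2}=0$, and we are done. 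The main obstacle is thus bookkeeping the universal coefficient spectral sequence compatibly with the filtration — essentially verifying that "graded $p$-torsion free" is precisely the condition that makes mod-$p$ reduction commute with taking associated graded groups in degree $0$ — rather than any single hard estimate.
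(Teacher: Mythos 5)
Your overall target is right---by Proposition \ref{prp:reduceds} it suffices to produce a nonzero class in $\tBN^0(D;\F_p)$ represented by a cocycle in $\tilde{\mathcal F}_{s^\Q}$---but both arguments you offer for this step have genuine gaps, and they miss where the hypothesis actually enters.

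In the first paragraph you assert that $H^0(\tilde{\mathcal F}_{s^\Q};\Z)\to\tBN^0(D;\Z)=\Z$ \emph{hits a generator}. That is not automatic. Remark \ref{rem:gradedreduced} only gives that the image $\tBN^0(D;\Z)_{s^\Q}$ is a nonzero subgroup of $\Z$, i.e.\ $n\Z$ for some $n\geq 1$; nothing forces $n=1$, since the filtration on $\tBN^0(D;\Z)$ can step down through several finite cyclic quotients below $s^\Q$ whose orders multiply to $n$. If $p\mid n$ then $[\bar u]=0$ in $\tBN^0(D;\F_p)$ and the argument collapses. Freeness of $\tBN^0(D;\Z)$ alone does not rule this out. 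You correctly sense in the second paragraph that something more is needed, but you mis-locate the danger: the worry is not that $[\bar u]$ could be forced into higher filtration (that would only make $s^{\F_p}$ larger, still giving the desired inequality), it is that $[\bar u]$ could be zero.

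In the second paragraph, the proposed short exact sequence $0\to \tBN^0(D;\Z)^{(q)}\otimes\F_p\to \tBN^0(D;\F_p)^{(q)}\to \Tor(\tBN^1(D;\Z)^{(q)},\F_p)\to 0$ is not a consequence of the universal coefficient theorem: the associated graded pieces are $E_\infty$-terms of a spectral sequence, and there is no UCT at the $E_\infty$ level. Worse, the claimed sequence is false even under the lemma's hypothesis. Since $\tBN^1=0$, it would yield $\tBN^0(D;\F_p)^{(q)}\cong\tBN^0(D;\Z)^{(q)}\otimes\F_p$ for all $q$ and hence the \emph{equality} $s^{\F_p}(K)=s^\Q(K)$ whenever $\tBN^\ast(K;\Z)$ is graded $p$-torsion free. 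The mirror $\overline{K}$ of $K=14^n_{19265}$, discussed in the remark following Lemma \ref{lm:easytorfree}, is a counterexample: $\tBN^\ast(\overline{K};\Z)$ is graded torsion free with a single surviving graded piece $\Z$ at $q=0$, yet $s^{\F_2}(\overline{K})=2\neq 0=s^\Q(\overline{K})$; so $\tBN^0(\overline{K};\F_2)^{(0)}=0$ while $\tBN^0(\overline{K};\Z)^{(0)}\otimes\F_2=\F_2$.

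The paper instead localizes. Over $\Z_{(p)}$, graded $p$-torsion free becomes graded torsion free, and since $\tBN^0(K;\Z_{(p)})=\Z_{(p)}$ has only one graded piece of positive rank, torsion-freeness forces the filtration to jump in a single step from $0$ to all of $\Z_{(p)}$ at $q=s^\Q$; equivalently the integer $n$ above is a unit in $\Z_{(p)}$, i.e.\ coprime to $p$---precisely what your first paragraph needs and lacks. Then $H^0(\tilde{\mathcal F}_{s^\Q};\Z_{(p)})\to\tBN^0(K;\Z_{(p)})$ is surjective, and surjectivity persists after tensoring down to $\F_p$, which finishes the proof.
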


\begin{proof}
Let $\Z_{(p)}$ be the subring of $\Q$ obtained from $\Z$ by inverting all primes different from $p$. As this is a localization of $\Z$, we get that $\tBN^\ast(K;\Z_{(p)})$ is graded torsion free. Also, since $\Q$ is a localization of $\Z_{(p)}$, the only non-zero group among the associated graded cohomology groups is $\tBN^0(K;\Z_{(p)})^{(s^\Q(K))} \cong \Z_{(p)}$, and $H^0(\tilde{\mathcal{F}}_{s^\Q(K)};\Z_{(p)}) \to \tBN^0(K;\Z_{(p)})$ is surjective.

Since there is a surjective ring homomorphism from $\Z_{(p)}$ to $\F_p$ the corresponding map $H^0(\tilde{\mathcal{F}}_{s^\Q(K)};\F_p) \to \tBN^0(D;\F_p)$ is also surjective. This implies
\[
s^{\F_p}(K) \geq s^{\Q}(K),
\]
which is what is claimed.
\end{proof}

\begin{corollary}
Let $K$ be a knot such that both $\tBN^\ast(K;\Z)$ and $\tBN^\ast(\overline{K};\Z)$ are graded $p$-torsion free for a prime number $p$, where $\overline{K}$ is the mirror diagram of $K$. Then
\[
s^{\F_p}(K) = s^{\Q}(K).
\]
\end{corollary}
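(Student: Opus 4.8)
The plan is to derive this from Lemma~\ref{lm:sinequ} together with the standard mirror-knot relation for the Rasmussen invariant. Applying Lemma~\ref{lm:sinequ} directly to $K$ gives $s^{\F_p}(K)\geq s^\Q(K)$, using that $\tBN^\ast(K;\Z)$ is graded $p$-torsion free. Applying it to the mirror $\overline{K}$, which is also graded $p$-torsion free by hypothesis, gives $s^{\F_p}(\overline{K})\geq s^\Q(\overline{K})$.

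Next I would invoke the well-known identity $s^\F(\overline{K}) = -s^\F(K)$, valid over any field $\F$; this follows from the fact that the Khovanov/Bar-Natan complex of the mirror is the dual complex, so the filtration on cohomology gets reversed, interchanging the roles of $s_{\max}$ and $-s_{\min}$. (This identity is already implicit in the literature cited in Section~2, e.g.\ \cite{MR2729272, MR3189434}, and holds over $\Q$ and over each $\F_p$.) Substituting into the second inequality yields $-s^{\F_p}(K)\geq -s^\Q(K)$, i.e.\ $s^{\F_p}(K)\leq s^\Q(K)$.

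Combining the two inequalities $s^{\F_p}(K)\geq s^\Q(K)$ and $s^{\F_p}(K)\leq s^\Q(K)$ gives $s^{\F_p}(K) = s^\Q(K)$, as claimed.

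There is essentially no obstacle here: the only non-formal ingredient is the mirror identity $s^\F(\overline{K}) = -s^\F(K)$, which must be cited or recalled, but this is entirely standard and already underlies the remark in the introduction that $s^{\F_p}(K)=s^\Q(K)$ for all but finitely many $p$ (one applies the corollary's hypotheses, which hold for all but finitely many $p$ since only finitely many primes can divide the finitely many torsion orders appearing in $\tBN^0(K;\Z)$ and $\tBN^0(\overline{K};\Z)$). If one wanted to avoid quoting the mirror identity, one could instead note that graded $p$-torsion-freeness of $\overline{K}$ forces the torsion in the associated graded groups of $\tBN^\ast(\overline{K};\Z)$ to be $p$-free and rerun the localization argument of Lemma~\ref{lm:sinequ} with the filtration reversed, but quoting the known field-level identity is cleaner.
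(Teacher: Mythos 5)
Your proof is correct and takes essentially the same approach as the paper: apply Lemma~\ref{lm:sinequ} to both $K$ and $\overline{K}$, then combine the two inequalities via the standard mirror identity $s^\F(\overline{K}) = -s^\F(K)$. The paper's own proof consists of exactly this observation, stated in one line.
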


\begin{proof}
This follows from $s^{\F}(K) = -s^{\F}(\overline{K})$ for any field $\F$.
\end{proof}

\begin{corollary}
Let $K$ be a knot. Then 
\[
s^{\F_p}(K) = s^{\Q}(K).
\]
for all but finally many primes $p$.\hfill\qed
\end{corollary}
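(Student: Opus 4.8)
The plan is to deduce this from the corollary immediately preceding it, together with the elementary fact that the reduced Khovanov homology of a knot is a finitely generated abelian group supported in only finitely many degrees. First I would fix a diagram $D$ of $K$ and observe that the reduced Bar--Natan complex $\tCBN^\ast(D;\Z)$ has finitely generated free underlying cochain groups, nonzero in only finitely many bidegrees; hence $E^{p,q}_{1,\Z} = \tilde{H}^{p,q}_{\Kh}(K;\Z)$ is finitely generated and vanishes for all but finitely many pairs $(p,q)$.

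Next I would use that $\tBN^0(K;\Z)^{(q)} = E^{0,q}_{\infty,\Z}$ is a subquotient of $E^{0,q}_{1,\Z}$, so it is itself finitely generated and vanishes for all but finitely many $q$. Consequently the direct sum $\bigoplus_q \Tor\big(\tBN^0(K;\Z)^{(q)}\big)$ of the torsion subgroups is a finite abelian group, and only finitely many primes divide its order. For every prime $p$ outside this finite set, none of the groups $\tBN^0(K;\Z)^{(q)}$ contains an element of order $p$, i.e.\ $\tBN^\ast(K;\Z)$ is graded $p$-torsion free.

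Finally I would run the same argument for the mirror knot $\overline{K}$, obtaining a second finite set of exceptional primes, and let $P_0$ be the union of the two. For every prime $p\notin P_0$ both $\tBN^\ast(K;\Z)$ and $\tBN^\ast(\overline{K};\Z)$ are graded $p$-torsion free, so the corollary asserting that $s^{\F_p}(K)=s^\Q(K)$ under this hypothesis (equivalently, Lemma~\ref{lm:sinequ} applied to $K$ and to $\overline{K}$ together with $s^\F(K)=-s^\F(\overline{K})$) yields $s^{\F_p}(K)=s^\Q(K)$ for all $p\notin P_0$, which is finitely many exceptions.

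I do not expect a genuine obstacle here. The only point requiring care is the observation that the associated graded groups $\tBN^0(K;\Z)^{(q)}$ are finitely generated and supported in finitely many degrees, and even that is immediate from the finiteness of the cochain complex via the identification $\tBN^0(K;\Z)^{(q)} = E^{0,q}_{\infty,\Z}$ noted after the definition of the induced filtration.
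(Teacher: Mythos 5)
Your argument is correct and is precisely the intended one: the finitely generated, finitely supported $E_1$-page of the reduced spectral sequence forces all but finitely many primes to be graded-torsion-free for $K$ and $\overline{K}$, and then the preceding corollary applies. This matches the paper, which leaves the proof implicit with a \qed.
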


\begin{corollary}
Let $K$ be a knot such that $s^\Z(K)=s^\Q(K)$ and $s^\Z(\overline{K})=s^\Q(\overline{K})$. Then
\[
s^{\F_p}(K) = s^{\Q}(K).
\]
for all primes $p$.\hfill\qed
\end{corollary}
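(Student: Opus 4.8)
The plan is to reduce the statement to the corollary asserting that $s^{\F_p}(K)=s^\Q(K)$ whenever both $\tBN^\ast(K;\Z)$ and $\tBN^\ast(\overline{K};\Z)$ are graded $p$-torsion free. Concretely, I would first establish that for any knot $L$,
\[
s^\Z(L)=s^\Q(L)\iff \tBN^\ast(L;\Z)\text{ is graded }p\text{-torsion free for every prime }p,
\]
and then apply this to $L=K$ and $L=\overline{K}$, for which the left-hand side holds by hypothesis; feeding both into the earlier corollary immediately gives $s^{\F_p}(K)=s^\Q(K)$ for every $p$.

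For the equivalence, first unpack the left-hand side: by definition $s^\Z(L)=s^\Q(L)$ means $\gl(L)=0$, which says exactly that $\tBN^0(L;\Z)^{(q)}=0$ for every $q<s^\Q(L)$. By Remark \ref{rem:gradedreduced} the only remaining associated graded group in homological degree $0$ is $\tBN^0(L;\Z)^{(s^\Q(L))}\cong\Z$, which is torsion free; and since the reduced Bar-Natan cohomology of a knot is concentrated in homological degree $0$ (compare \cite{MR4079621}), there are no associated graded groups in other homological degrees. Hence $\gl(L)=0$ forces every $\tBN^\ast(L;\Z)^{(q)}$ to be torsion free, so $\tBN^\ast(L;\Z)$ is graded $p$-torsion free for all $p$. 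Conversely, if all the $\tBN^\ast(L;\Z)^{(q)}$ are torsion free, then $\tBN^0(L;\Z)$ is free of rank $1$ and its induced filtration is a descending, exhaustive, bounded-above chain of subgroups of $\Z$ whose successive quotients below $s^\Q(L)$ are the finite cyclic groups $\tBN^0(L;\Z)^{(q)}$; torsion freeness forces these to be trivial, i.e. $\tBN^0(L;\Z)_{s^\Q(L)}=\tBN^0(L;\Z)$, which is $\gl(L)=0$.

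With the equivalence in hand the proof finishes in one line: $K$ and $\overline{K}$ both satisfy $s^\Z(\cdot)=s^\Q(\cdot)$, hence for every prime $p$ both $\tBN^\ast(K;\Z)$ and $\tBN^\ast(\overline{K};\Z)$ are graded $p$-torsion free, and the earlier corollary yields $s^{\F_p}(K)=s^\Q(K)$.

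I expect the only non-formal point to be the appeal to the fact that $\tBN^\ast(K;\Z)$ is concentrated in homological degree $0$: this is what guarantees that the condition $\gl(L)=0$, which a priori constrains only homological degree $0$, rules out torsion in all associated graded groups. If one prefers not to invoke it, one can bypass the earlier corollary and simply rerun the proof of Lemma \ref{lm:sinequ}: $\gl(K)=0$ directly gives $\tBN^0(K;\Z)^{(q)}=0$ for $q<s^\Q(K)$, so after localizing at any prime $p$ the map $H^0(\tilde{\mathcal F}_{s^\Q(K)};\Z_{(p)})\to \tBN^0(K;\Z_{(p)})$ is surjective; composing with $\Z_{(p)}\to\F_p$ and using $\tBN^0(D;\F_p)\cong\F_p$, so is $H^0(\tilde{\mathcal F}_{s^\Q(K)};\F_p)\to \tBN^0(D;\F_p)$, whence $s^{\F_p}(K)\geq s^\Q(K)$; the mirror inequality for $\overline{K}$ together with $s^\F(\overline{K})=-s^\F(K)$ then gives equality. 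Either way the argument is essentially bookkeeping with exactness of localization, so I anticipate no real obstacle.
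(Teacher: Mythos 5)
Your argument is correct and matches the paper's intended route: the corollary follows from the immediately preceding one via the observation that $\gl(K)=0$ (i.e.\ $s^\Z(K)=s^\Q(K)$) forces $\tBN^\ast(K;\Z)$ to be graded $p$-torsion free for every prime $p$, and likewise for $\overline{K}$. The subtlety you flag — that $\gl(K)=0$ a priori only constrains homological degree $0$, so one must also know $\tBN^\ast(K;\Z)$ is concentrated there — is genuine but handled exactly as you say, and your alternative rerun of Lemma~\ref{lm:sinequ} sidesteps it cleanly.
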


We can refine the contrapositive of Lemma \ref{lm:sinequ} by considering the graded length. We omit the proof.

\begin{corollary}
Let $K$ be a knot. Then
\[
\gl(K) \geq \frac{s^\Q(K)-s^{\F_p}(K)}{2}
\]
for every prime $p$.\qed
\end{corollary}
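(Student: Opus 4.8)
The plan is to prove the contrapositive of a degree-sensitive strengthening of Lemma~\ref{lm:sinequ}. First I would dispose of the trivial case: if $s^{\F_p}(K)\geq s^\Q(K)$ the inequality holds because $\gl(K)\geq 0$ by definition. So assume $s^{\F_p}(K)<s^\Q(K)$, write $s:=s^\Q(K)$, and set $m:=\tfrac12\bigl(s-s^{\F_p}(K)\bigr)$; this is an integer $\geq 1$ since both $s$-invariants are even. By the definition of the graded length it then suffices to produce an even integer $q\leq s-2m$ with $\tBN^0(K;\Z)^{(q)}\neq 0$: such a $q$ equals $s-2l$ for some $l\geq m$, and hence witnesses $\gl(K)\geq m$.

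I would argue by contradiction and assume $\tBN^0(K;\Z)^{(q)}=0$ for every $q\leq s-2m$. Using that $\tBN^0(K;\Z)^{(q)}=\tBN^0(K;\Z)_q/\tBN^0(K;\Z)_{q+2}$ and that the induced filtration exhausts $\tBN^0(K;\Z)$ in sufficiently negative degrees, the vanishing of all these associated graded groups in degrees $\leq s-2m$ forces the induced filtration to have already reached the whole module by degree $s-2m+2$, that is, $\tBN^0(K;\Z)_{s-2m+2}=\tBN^0(K;\Z)$. In contrast with the hypothesis of Lemma~\ref{lm:sinequ}, the remaining graded pieces $\tBN^0(K;\Z)^{(q)}$ with $s-2m+2\leq q\leq s$ are allowed to contain $p$-torsion; they play no role here.

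Next I would localize at $p$, writing $\Z_{(p)}$ for the localization of $\Z$ at $p$ as in the proof of Lemma~\ref{lm:sinequ}. Exactness of localization, and its commuting with the formation of images, give $\tBN^0(K;\Z_{(p)})_{s-2m+2}=\tBN^0(K;\Z_{(p)})$. Moreover $\tBN^0(K;\Z_{(p)})$ is finitely generated of rank one over the PID $\Z_{(p)}$ (its rationalization is $\tBN^0(K;\Q)\cong\Q$) and has no $p$-torsion, since $\dim_{\F_p}\tBN^0(K;\F_p)=1$ together with the universal coefficient theorem forbids $p$-torsion in $\tBN^0(K;\Z_{(p)})$; hence $\tBN^0(K;\Z_{(p)})\cong\Z_{(p)}$. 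Consequently $H^0(\tilde{\mathcal{F}}_{s-2m+2};\Z_{(p)})\to\tBN^0(K;\Z_{(p)})$ is surjective, and reducing along the surjection $\Z_{(p)}\to\F_p$ exactly as in Lemma~\ref{lm:sinequ} makes $H^0(\tilde{\mathcal{F}}_{s-2m+2};\F_p)\to\tBN^0(K;\F_p)$ surjective as well. Thus $\tilde{s}^{\F_p}(D)\geq s-2m+2$ for any diagram $D$ of $K$, and by Proposition~\ref{prp:reduceds} this reads $s^{\F_p}(K)\geq s-2m+2=s^{\F_p}(K)+2$, a contradiction. Therefore some $\tBN^0(K;\Z)^{(q)}$ with even $q\leq s-2m$ is nonzero, so $\gl(K)\geq m=\tfrac12\bigl(s^\Q(K)-s^{\F_p}(K)\bigr)$.

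I do not expect a genuine obstacle: the whole argument is a localized, degree-tracked rerun of Lemma~\ref{lm:sinequ}. The two steps needing a little care are the implication that vanishing of all associated graded pieces in degrees $\leq s-2m$ makes the induced filtration equal the whole module from degree $s-2m+2$ downward, which is routine bookkeeping for a bounded decreasing filtration, and the mod-$p$ reduction, which is already carried out in the proof of Lemma~\ref{lm:sinequ}. The conceptual point is only that the hypothesis $\gl(K)<m$ supplies exactly the vanishing of low-degree graded pieces that the localized argument consumes; no graded $p$-torsion-freeness is required.
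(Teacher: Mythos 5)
Your proof is correct and is precisely the ``refinement of the contrapositive of Lemma~\ref{lm:sinequ}'' that the paper announces and then omits: you rule out $\gl(K)<m$ by observing that the vanishing of the graded pieces in degrees $\le s-2m$ forces the filtration to stabilize by degree $s-2m+2$, then you localize at $p$ and reduce mod $p$ exactly as in Lemma~\ref{lm:sinequ} to contradict $s^{\F_p}(K)=s-2m$ via Proposition~\ref{prp:reduceds}. One small simplification: you do not need the UCT/rank detour to identify $\tBN^0(K;\Z_{(p)})\cong\Z_{(p)}$, since the paper already records $\tBN^0(K;\Z)\cong\Z$ (used in the proof of Theorem~\ref{thm:sliceinv}); localizing that gives the identification immediately.
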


\begin{remark}
One can show that $\tCBN^\ast(\overline{D};\Z)$ is isomorphic to the dual complex of $\tCBN^\ast(D;\Z)$ in a filtration preserving way, but this does not imply that one is graded torsion free if and only if the other is. Indeed, consider the knot $K = 14^n_{19265}$ in knotscape notation. Figure \ref{fig:14n19265} contains the reduced integral Khovanov homology in quantum degrees $-8$ to $2$.
\begin{figure}[ht]
\begin{tabular}{|c||c|c|c|c|c|c|c|c|}
\hline
\backslashbox{\!$q$\!}{\!$h$\!} & $-4$ & $-3$ & $-2$ & $-1$ & $0$ & $1$ & $2$ & $3$ \\
\hline
\hline
$2$  &   &   &   &   &   &   & $ \Rone $ & $ \Rmor{2} $ \\
\hline
$0$  &   &   &   &   & $ \Rone $ & $ \Rone $ & $ \Rone $ &   \\
\hline
$-2$  &   &   &   &   & $ \Tmor{2}{2} $ & $ \Rone $ &   &   \\
\hline
$-4$  &   &   & $ \Rmor{2} $ & $ \Rone \oplus \Tone{2} $ & $ \Rone $ &   &   &   \\
\hline
$-6$  &   & $ \Rmor{3} $ & $ \Rone \oplus \Tone{2} $ &   &   &   &   &   \\
\hline
$-8$  & $ \Rmor{2} $ & $ \Tmor{2}{2} $ &   &   &   &   &   &   \\
\hline
\end{tabular}
\caption{\label{fig:14n19265}The reduced integral Khovanov homology of $14^n_{19265}$ in quantum degrees $-8$ to $2$.}
\end{figure}
No other quantum degree has non-zero groups in homological degrees between $-3$ and $3$. The $\Z$-summand in bi-degree $(-2,-6)$ has to cancel the $\Z$-summand in bi-degree $(-1,-4)$. Therefore one of the $\Z/2\Z$ summands in bi-degree $(0,-2)$ survives to the $E_\infty$ stage of the spectral sequence, and $\tBN^0(K;\Z)^{(-2)} = \Z/2\Z$.

On the other hand, consider the reduced Khovanov homology of $\overline{K}$ in Figure \ref{fig:themirror}.
\begin{figure}[ht]
\begin{tabular}{|c||c|c|c|c|c|c|c|c|}
\hline
\backslashbox{\!$q$\!}{\!$h$\!} & $-3$ & $-2$ & $-1$ & $0$ & $1$ & $2$ & $3$ & $4$ \\
\hline
\hline
$8$  &   &   &   &   &   &   &   & $ \Rmor{2} \oplus \Tmor{2}{2} $ \\
\hline
$6$  &   &   &   &   &   & $ \Rone $ & $ \Rmor{3} \oplus \Tone{2} $ &   \\
\hline
$4$  &   &   &   & $ \Rone $ & $ \Rone $ & $ \Rmor{2} \oplus \Tone{2} $ &   &   \\
\hline
$2$  &   &   & $ \Rone $ &   & $ \Tmor{2}{2} $ &   &   &   \\
\hline
$0$  &   & $ \Rone $ & $ \Rone $ & $ \Rone $ &   &   &   &   \\
\hline
$-2$  & $ \Rmor{2} $ & $ \Rone $ &   &   &   &   &   &   \\
\hline
\end{tabular}
\caption{\label{fig:themirror}The reduced integral Khovanov homology of the mirror of $14^n_{19265}$ in quantum degrees $-2$ to $8$.}
\end{figure}

The $\Z$ in bi-degree $(1,4)$ has to cancel the $\Z$ in bi-degree $(2,6)$, and so the $\Z$ in bi-degree $(0,0)$ survives to the $E_\infty$ term. By Remark \ref{rem:gradedreduced} the term in bi-degree $(0,4)$ will not survive to the $E_\infty$-stage.

The knot $K$ is of course the first knot (in knotscape ordering) for which $s^{\F_2}(K) \not= s^\Q(K)$, see \cite{MR3189434}.
\end{remark}

The simplest criterion to get $\BN^\ast(K;\Z)$ graded torsion free for a knot is to have only one $q$-degree with non-trivial $\tilde{H}_{\Kh}^{0,j}(K;\Z)$. 

\begin{lemma}\label{lm:easytorfree}
Let $K$ be a knot such that there is exactly one $q$ with $\tilde{H}_{\Kh}^{0,q}(K;\Z)$. Then $\tBN^\ast(K;\Z)$ is graded torsion free, and $q=s^\Q(K)$.\hfill\qed
\end{lemma}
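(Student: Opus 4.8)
The plan is to read everything off from the reduced Bar--Natan spectral sequence over $\Z$, whose first page is $E^{p,q}_{1,\Z} = \tilde{H}_{\Kh}^{p,q}(K;\Z)$ and whose limit satisfies $E^{p,q}_{\infty,\Z} = \tBN^{p}(K;\Z)^{(q)}$, as noted above. I first need two facts about the \emph{total} reduced Bar--Natan cohomology of $K$: that $\tBN^{p}(K;\Z) = 0$ for $p\neq 0$, and $\tBN^{0}(K;\Z)\cong\Z$. Over every field $\F$ the reduced Bar--Natan cohomology of a knot is $\F$ concentrated in homological degree $0$; feeding this into the universal coefficient theorem, the only possibility is that $\tBN^{n}(K;\Z)$ vanishes for $n\neq 0$ (its reduction mod $p$ vanishes for every $p$, and it is finitely generated) and is torsion free of rank $1$ in degree $0$. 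This is also implicit in the discussion preceding Remark~\ref{rem:gradedreduced}.

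Given these, the homological degrees $p\neq 0$ are immediate: there $\tBN^{p}(K;\Z) = 0$, so every associated graded group $\tBN^{p}(K;\Z)^{(q)}$ vanishes, in particular is torsion free. For $p = 0$, I would observe that $E^{0,q}_{\infty,\Z}$ is a subquotient of $E^{0,q}_{1,\Z} = \tilde{H}_{\Kh}^{0,q}(K;\Z)$, and by hypothesis the latter is non-zero for exactly one value $q_0$ of $q$; hence $\tBN^{0}(K;\Z)^{(q)} = 0$ for all $q\neq q_0$. Now the induced filtration $(\tBN^{0}(K;\Z)_q)$ on $\tBN^{0}(K;\Z)\cong\Z$ is exhaustive and vanishes for $q\gg 0$ (the filtration of the complex does), and all of its consecutive quotients vanish except at $q_0$. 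A descending induction starting from $\tBN^{0}(K;\Z)_q=0$ for $q\gg 0$ gives $\tBN^{0}(K;\Z)_{q_0+2} = 0$, and, moving down, $\tBN^{0}(K;\Z)_{q_0} = \tBN^{0}(K;\Z)_{q_0-2}=\cdots$, which together with exhaustiveness forces $\tBN^{0}(K;\Z)_{q_0} = \tBN^{0}(K;\Z)$. Therefore $\tBN^{0}(K;\Z)^{(q_0)}\cong\Z$, which is torsion free, and combining the two cases $\tBN^{\ast}(K;\Z)$ is graded torsion free.

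It remains to identify $q_0$ with $s^\Q(K)$. By the previous paragraph $\tBN^{0}(K;\Z)_q = \Z$ for $q\leq q_0$ and $\tBN^{0}(K;\Z)_q = 0$ for $q>q_0$; equivalently $q_0$ is the largest $q$ for which $H^0(\tilde{\mathcal{F}}_q;\Z)\to\tBN^{0}(K;\Z)$ is non-zero. By Remark~\ref{rem:gradedreduced} this largest value is $s^\Q(K)$, so $q_0 = s^\Q(K)$.

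The argument is little more than bookkeeping with the spectral sequence; the only steps needing care are ensuring the vanishing $\tBN^{p}(K;\Z) = 0$ for $p\neq 0$ is in hand (so that the conclusion genuinely covers all homological degrees, not merely $p = 0$), and the short induction that upgrades ``only one associated graded piece of $\tBN^{0}(K;\Z)$ is non-zero'' to ``that piece is all of $\Z$''.
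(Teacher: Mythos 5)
Your proof is correct, and it is essentially the argument the paper leaves to the reader: since each $E_\infty$-page entry is a subquotient of the corresponding $E_1$-page entry $\tilde{H}_{\Kh}^{p,q}(K;\Z)$, the hypothesis kills every associated graded piece in homological degree $0$ except one, which must then be all of $\tBN^0(K;\Z)\cong\Z$, and Remark~\ref{rem:gradedreduced} identifies the surviving $q$ with $s^\Q(K)$. The only inessential difference is your detour through the universal coefficient theorem to establish $\tBN^\ast(K;\Z)\cong\Z$ concentrated in degree~$0$; the paper obtains this directly from the cited freeness result over arbitrary coefficient rings together with the splitting in Proposition~\ref{prp:sesred}.
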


For knots with small crossing numbers the condition of Lemma \ref{lm:easytorfree} is often satisfied, but maybe not as often as one might hope. Still, one can often analyze the spectral sequence to conclude that the Bar-Natan complex is graded torsion free.

Let us introduce the following notation to study this further.

\begin{definition}
Let $K$ be a knot, $n\geq 1$ and $k\leq l$ be integers. We say that $K$ is {\em $n$-thin in degrees $k$ to $l$}, if all non-trivial reduced Khovanov homology groups $\tilde{H}_{\Kh}^{i,q}(K;\Z)$ with $k\leq i \leq l$ lie on $n$ adjacent diagonals, and are torsion free.  We say that $K$ is {\em $n$-thin}, if $K$ is $n$-thin in degrees $-c$ to $c$, where $c$ is the crossing number of $K$.
\end{definition}

\begin{lemma}
Let $K$ be a knot which is $2$-thin in degrees $-1$ to $2$ and whose reduced Khovanov homology is also torsion free in homological degree $3$, and $p$ a prime number. If $K$ is not graded $p$-torsion free, then $s^\Q(K) =s^{\F_p}(K)+2$.
\end{lemma}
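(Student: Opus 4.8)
The plan is to run the reduced Bar-Natan spectral sequence $E^{\ast,\ast}_{\ast,\K}$ for $\K=\Q$, $\K=\Z_{(p)}$ and $\K=\F_p$ in parallel, exploiting that $2$-thinness makes it collapse quickly. Write $a=s^\Q(K)$, recall $E^{p,q}_{1,\K}=\tilde H_{\Kh}^{p,q}(K;\K)$, that $\tBN^\ast(K;\K)$ is concentrated in homological degree $0$ where it has rank one, and that $d_n$ has bidegree $(1,2n)$, hence changes the diagonal grading $\delta=q-2p$ by $2(n-1)$. By hypothesis $E^{i,\ast}_{1,\Z}$ is $\Z$-free and supported on two diagonals $\delta=\delta_{\mathrm{bot}}$ and $\delta=\delta_{\mathrm{bot}}+2$ for $i\in\{-1,0,1,2\}$; since $\tilde H_{\Kh}^i(K;\Z)$ is torsion free for $i\in\{-1,\dots,3\}$, universal coefficients give $E^{i,\ast}_{1,\F_p}\cong E^{i,\ast}_{1,\Z}\otimes\F_p$ for $i\in\{-1,0,1,2\}$, so this two-diagonal picture persists over $\F_p$ and $\Z_{(p)}$ in those degrees. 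A diagonal count then shows $d_n=0$ for $n\geq3$ whenever its source and target both lie in homological degrees $-1$ to $2$; in particular, over each of the three coefficient rings, $E^{0,q}_\infty=E^{0,q}_3$ and $E^{1,q}_\infty=E^{1,q}_3$ are built using only $d_1$ (which preserves each diagonal) and $d_2$ (which carries the lower diagonal to the upper one).

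First I would locate $a$. Over $\Q$ the unique surviving class is in bidegree $(0,a)$, so $a$ is one of the two quantum degrees occupied by $\tilde H^{0,\ast}_{\Kh}(K;\Q)$; these differ by $2$, hence are $\{a,a+2\}$ or $\{a-2,a\}$. As $K$ is not graded $p$-torsion free, some $\tBN^0(K;\Z)^{(q)}$ has $p$-torsion, and since $\tBN^\ast$ sits in homological degree $0$, Remark~\ref{rem:gradedreduced} forces $q<a$; thus there is an occupied quantum degree below $a$, so the occupied degrees are $\{a-2,a\}$, the $p$-torsion lies in $\tBN^0(K;\Z)^{(a-2)}$, and $\tBN^0(K;\Z)^{(q)}=0$ for $q\notin\{a,a-2\}$. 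Hence $\tBN^0(K;\Z)_{a-2}=\tBN^0(K;\Z)$, so $H^0(\tilde{\mathcal F}_{a-2};\Z)\to\tBN^0(K;\Z)$ is onto; reducing mod $p$ gives $\tBN^0(K;\F_p)_{a-2}=\tBN^0(K;\F_p)$, i.e.\ $s^{\F_p}(K)\geq a-2$. It remains to prove $s^{\F_p}(K)<a$.

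So I must show $E^{0,a}_{\infty,\F_p}=0$. Since $\dim_{\F_p}\tBN^0(K;\F_p)=1$ and $E^{0,\ast}_{\infty,\F_p}$ is supported on $q\in\{a-2,a\}$, this amounts to $E^{0,a-2}_{\infty,\F_p}\neq0$. By the collapse above $E^{0,a-2}_\infty=E^{0,a-2}_3=\ker\bigl(d_2\colon E^{0,a-2}_2\to E^{1,a+2}_2\bigr)$ (the $d_2$ into $E^{0,a-2}$ vanishes, arising on a diagonal unoccupied in homological degree $-1$); moreover $E^{1,a+2}_\infty=E^{1,a+2}_3=0$ and the vanishing of the outgoing $d_2$ from $E^{1,a+2}_2$ force this $d_2$ to be onto, over $\Z_{(p)}$ as over $\F_p$. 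Over $\Z_{(p)}$ we thus get
\[
0\longrightarrow E^{0,a-2}_{3,\Z_{(p)}}\longrightarrow E^{0,a-2}_{2,\Z_{(p)}}\stackrel{d_2}{\longrightarrow}E^{1,a+2}_{2,\Z_{(p)}}\longrightarrow 0,
\]
with $E^{0,a-2}_{3,\Z_{(p)}}=\tBN^0(K;\Z)^{(a-2)}\otimes\Z_{(p)}\cong\Z/p^k$, $k\geq1$; so $E^{0,a-2}_{2,\Z_{(p)}}$ carries $p$-torsion while having the same $\Z_{(p)}$-rank as $E^{1,a+2}_{2,\Z_{(p)}}$. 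As $E^{0,a-2}_2$ and $E^{1,a+2}_2$ are computed from $\tilde H_{\Kh}^{i,\ast}$ with $i\in\{-1,0,1\}$, resp.\ $i\in\{0,1,2\}$, the $E_1$-complexes along the two diagonals base change from $\Z_{(p)}$ to $\F_p$ (here torsion freeness of $\tilde H^3_{\Kh}(K;\Z)$ is used, to get $E^{2,\ast}_{1,\F_p}\cong E^{2,\ast}_{1,\Z}\otimes\F_p$). Comparing $\F_p$-dimensions via universal coefficients should then give $\dim_{\F_p}E^{0,a-2}_{2,\F_p}>\dim_{\F_p}E^{1,a+2}_{2,\F_p}$; granting this, the surjection $d_2$ over $\F_p$ has nonzero kernel, so $E^{0,a-2}_{\infty,\F_p}=\F_p$ and $s^{\F_p}(K)=a-2=s^\Q(K)-2$.

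The step I expect to be the main obstacle is establishing that last inequality. The passage from $\Z_{(p)}$ to $\F_p$ introduces $\Tor$-correction terms $\Tor(E^{1,\ast}_{2,\Z_{(p)}},\F_p)$ and $\Tor(E^{2,\ast}_{2,\Z_{(p)}},\F_p)$ on the two diagonals, and one must rule out that they absorb the extra $\F_p$-dimension that the summand $\Z/p^k$ injects into $E^{0,a-2}_{2,\F_p}$. I would control these terms by reading off from the $\Q$-spectral sequence the rigidity that, over $\Q$, $d_2$ is injective along the entire lower diagonal and surjective onto the upper diagonal apart from a one-dimensional cokernel in homological degree $0$; transporting this back over $\Z_{(p)}$ pins down the torsion in the neighbouring $E_2$-groups, after which the dimension count closes.
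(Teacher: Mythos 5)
Your overall strategy matches the paper's: pin down the two occupied quantum degrees $\{a-2,a\}$ in homological degree $0$, observe that the torsion must sit in $E^{0,a-2}_\infty$, deduce $s^{\F_p}(K)\geq a-2$ via surjectivity of $H^0(\tilde{\mathcal F}_{a-2};\F_p)\to\tBN^0(K;\F_p)$, and then show $s^{\F_p}(K)<a$ by analysing the $E_2$-page. The first two-thirds of your argument are correct and essentially the same as the paper's.

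However, the final step is a genuine gap, and you acknowledge it yourself. You reduce to showing $\dim_{\F_p}E^{0,a-2}_{2,\F_p}>\dim_{\F_p}E^{1,a+2}_{2,\F_p}$, but what you say to close the gap does not obviously work. After cancelling ranks, the inequality becomes a statement about $\F_p$-dimensions of $\Tor$-groups of $E^{0,a-2}_2$, $E^{1,a}_2$, $E^{1,a+2}_2$, $E^{2,a+4}_2$, and the short exact sequence $0\to\Z/p^k\to E^{0,a-2}_{2,\Z_{(p)}}\to E^{1,a+2}_{2,\Z_{(p)}}\to 0$ by itself does \emph{not} force $\dim\Tor(E^{0,a-2}_2,\F_p)>\dim\Tor(E^{1,a+2}_2,\F_p)$ (the standard counterexample $0\to\Z/p\to\Z/p^2\to\Z/p\to 0$ gives equal $\Tor$-dimensions). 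Your suggestion to ``transport rigidity from the $\Q$-spectral sequence'' only controls ranks, not the torsion, which is exactly what the inequality is about; and $d_2\colon E^{1,a}_2\to E^{2,a+4}_2$ is only forced to be injective, not an isomorphism, unless one also controls homological degree $3$ more strongly than the hypotheses allow. So the proposed patch does not close the gap.

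The paper's own argument at this step is shorter and takes a slightly different tack: it invokes the UCT splitting $E^{0,q}_{2,\F_p}\cong E^{0,q}_2\otimes\F_p\oplus\Tor(E^{1,q+2}_2,\F_p)$ and the compatibility of $d_{2,\F_p}$ with $d_2\otimes\id$ on the tensor summands, to argue that the $p$-torsion of $E^{0,q}_2$ survives directly as an $\F_p$-summand of $E^{0,q}_{\infty,\F_p}$, rather than arguing via a dimension count. It would be worth making explicit in a final write-up why the $p$-torsion class $t\in T=\ker d_2$ does not vanish in $E^{0,q}_2\otimes\F_p$ — i.e., why $t\notin pE^{0,q}_2$ — since this is precisely the point your dimension-count reformulation was meant to handle and where the danger lies.
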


\begin{proof}
Let $q$ and $q+2$ be the only $q$-degrees which have non-trivial reduced Khovanov homology in homological degree $0$. For the spectral sequence coming from the filtration we write $E^{i,j}_1 = \tilde{H}_{\Kh}^{i,j}(K;\Z)$. By assumption, $E_1^{0,q}$ is torsion free, and $E_3^{0,q}=E_\infty^{0,q}$. The $p$-torsion from the assumption has to sit in $E_\infty^{0,q}$. If $E_2^{0,q}$ does not contain $p$-torsion, neither will $E^{0,q}_\infty$, as $E_2^{-1,q-2} = 0$ by assumption. So $E_2^{0,q}$ contains $p$-torsion that survives to $E_\infty^{0,q}$.

Let us write $E_{n,\F_p}^{i,j}$ for the corresponding spectral sequence with coefficients in $\F_p$. Since the reduced Khovanov homology is torsion free in homological degrees $-1$ to $3$, we get
\begin{align*}
E_{2,\F_p}^{0,q} &= E_2^{0,q}\otimes \F_p \oplus \Tor(E_2^{1,q+2}, \F_p),\\
E_{2,\F_p}^{1,q+2} &= E_2^{1,q+2}\otimes \F_p \oplus \Tor(E_2^{2,q+4}, \F_p),
\end{align*}
by the Universal Coefficient Theorem. Furthermore, the next differential $d_2^{\F_p}$ is just $d_2\otimes \id$ on the tensor product summands. In particular, the $p$-torsion of $E_2^{0,q}$ leads to a surviving $\F_p$-summand in $E_{\infty, \F_p}^{0,q}$. Since we have $E_\infty^{0,q+2}=\Z$ by Remark \ref{rem:gradedreduced}, and $\F_p$ is a field, we get $s^\Q(K) = q+2$, while $s^{\F_p} = q$.
\end{proof}

The only knot with up to $13$ crossings which is not $2$-thin in degrees $-1$ to $3$ is $13^n_{3663}$. This knot satisfies the conditions of Lemma \ref{lm:easytorfree}, although its mirror does not.

We do not have an example of a knot for which $\tilde{H}_{\Kh}^{\ast}(K)$ is torsion-free, but $\tBN^0(K)$ not graded torsion-free. Indeed, with knots up to $16$ crossings torsion in reduced Khovanov homology is rare, and for these knots we have both $s^\Z(K)=s^\Q(K)$ and  $s^\Z(\overline{K})=s^\Q(\overline{K})$, unless $s^\Q(K)\not=s^{\F_2}(K)$.

Torsion can occur in later stages of the spectral sequence. For example, for the torus knot $T(8,9)$ we get that the spectral sequence over $\Q$ collapses at a different page than the spectral sequence over $\F_7$ (both reduced and unreduced), despite the Khovanov homology not containing any $7$-torsion. Of course, this stays away from homological degree $0$ in this case.

The next example is the easiest way to produce algebraically a filtered cochain complex $C^\ast$ whose cohomology is supported in homological degree $0$ only, with this group being $\Z$, and such that it is not graded torsion-free. We suspect that most of our examples contain this complex (up to filtered chain homotopy equivalence) in their reduced Bar-Natan complex as a direct summand. For knots $K$ with up to 15 crossings and $s^{\F_2}(K)\not=s^\Q(K)$ this has been checked computationally, see \cite[Rm.4.5]{iltgen2021khovanov}.

\begin{example}\label{ex:staircase}
Let $p$ be a prime number and $C^\ast$ be the cochain complex with generators and coboundary as follows:
\[
\begin{tikzpicture}
\node at (0,0) {$\Z$};
\node at (2,0) {$\Z$};
\node at (2,1) {$\Z$};
\draw[->] (0.2,0) -- node [above, near end] {$p$} (1.8,0);
\draw[->] (0.2, 0.1) -- node [above, near end] {$1$} (1.8, 1);
\draw[-, dashed] (-0.4, 0.5) -- (2.4, 0.5);
\end{tikzpicture}
\]
The left-most copy of $\Z$ is in homological degree $-1$, and the two copies of $\Z$ in the middle form $C^0 = \Z^2$. The filtration is indicated by the dashed line, with $\mathcal{F}_0$ only containing one copy of $\Z$, with $\mathcal{F}_{-2} = C^\ast$.

The $E_1$-term of the spectral sequence has only two non-zero groups, namely $E_1^{0,0} = \Z$ and  $E_1^{0,-2} = \Z/p\Z$. No further cancellations are possible, so we have graded $p$-torsion here.
\end{example}

One might think that it is easier for the reduced Bar-Natan complex to be graded torsion free than for the unreduced version, but this is not the case.

\begin{lemma}\label{lm:redunred}
Let $K$ be a knot and $p$ a prime number. Then $\tBN^\ast(K;\Z)$ is graded $p$-torsion free if and only if $\BN^\ast(K;\Z)$ is graded $p$-torsion free.
\end{lemma}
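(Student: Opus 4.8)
\emph{Overview.} The plan is to localise at $p$, to reduce both the reduced and the unreduced case to a condition on a \emph{single} level of the induced filtration (the top one on $\BN^0$ respectively $\tBN^0$), and then to compare these two conditions using the short exact sequence of Proposition~\ref{prp:sesred} together with the sections produced by Lemma~\ref{lm:pushqup}. Set $R=\Z_{(p)}$. Localisation is exact and flat, so $\tBN^0(K;R)^{(q)}=\tBN^0(K;\Z)^{(q)}\otimes R$ and likewise for the unreduced groups; hence being graded $p$-torsion free over $\Z$ is the same as being graded torsion free over $R$. Moreover $R$ is a discrete valuation ring, $\tBN^0(K;R)\cong R$ and $\BN^0(K;R)=R[s_{\mathcal O}]\oplus R[s_{-\mathcal O}]\cong R^2$ are free, and, reading Remark~\ref{rem:gradedreduced} over $R$, the only associated graded group of positive rank is $\tBN^0(K;R)^{(s^\Q)}=\tBN^0(K;R)_{s^\Q}$ (reduced) resp. $\BN^0(K;R)^{(s^\Q+1)}=\BN^0(K;R)_{s^\Q+1}$ (unreduced), with $\tBN^0(K;R)_q=0$ for $q>s^\Q$ and $\BN^0(K;R)_q=0$ for $q>s^\Q+1$.

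\emph{Reducing each side to one filtration level.} On the rank-one free module $\tBN^0(K;R)$ the induced filtration is a chain of submodules $p^{a_q}R$ with $a_q$ non-decreasing, $a_q=0$ for $q\ll0$, $a_q=\infty$ for $q>s^\Q$, and $\tBN^0(K;R)^{(q)}\cong R/p^{\,a_{q+2}-a_q}$ for $q<s^\Q$. So $\tBN^\ast(K;R)$ is graded torsion free iff the $a_q$ are all equal, i.e. iff $a_{s^\Q}=0$, i.e. iff $\tBN^0(K;R)_{s^\Q}=\tBN^0(K;R)$; and in that case $\tBN^0(K;R)_m=\tBN^0(K;R)$ for every $m\le s^\Q$. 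For the unreduced complex I only need one implication here: if $\BN^\ast(K;R)$ is graded torsion free, then $\BN^0(K;R)/\BN^0(K;R)_{s^\Q+1}$ is an iterated extension of the torsion free modules $\BN^0(K;R)^{(q)}$ with $q\le s^\Q-1$, hence torsion free, so the finitely generated submodule $\BN^0(K;R)_{s^\Q+1}$ of the free module $\BN^0(K;R)$ is pure, hence a rank-one direct summand.

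\emph{The bridge.} For each $q$ take the restriction
\[
0\longrightarrow\tilde{\mathcal F}_{q+1}\stackrel{i}{\longrightarrow}\mathcal F_q\stackrel{T}{\longrightarrow}\tilde{\mathcal F}_{q-1}\longrightarrow 0
\]
of the short exact sequence of Proposition~\ref{prp:sesred} over $R$, and put $\sigma_q=(\id+\varepsilon_{q-2}I)|_{\tCBN^\ast(K;R)}$ with $\varepsilon_j$ the sign of Lemma~\ref{lm:pushqup}. Since every element of $\tilde{\mathcal F}_{q-1}=x\mathcal F_q$ lies in $\mathcal F_{q-2}$, Lemma~\ref{lm:pushqup} gives $\sigma_q(\tilde{\mathcal F}_{q-1})\subseteq\mathcal F_q$; and, using $xI(xu)=0$ (as in the proof of Proposition~\ref{prp:sesred}), $I^2=\id$ and $x\cdot xu=xu$, one computes $T\circ\sigma_q=\varepsilon_{q-2}\,\id$. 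Thus $\sigma_q$ is a cochain splitting of the sequence above (up to the unit $\varepsilon_{q-2}$), so $H^0(\mathcal F_q;R)=i_\ast H^0(\tilde{\mathcal F}_{q+1};R)\oplus\sigma_{q\ast}H^0(\tilde{\mathcal F}_{q-1};R)$; taking images in $\BN^0(K;R)$ gives
\[
\BN^0(K;R)_q=i_\ast\bigl(\tBN^0(K;R)_{q+1}\bigr)+\sigma_{q\ast}\bigl(\tBN^0(K;R)_{q-1}\bigr).
\]
Both $i_\ast$ and $\sigma_{q\ast}$ send the generator $[s_{\mathcal O}]$ of $\tBN^0(K;R)$ to a primitive vector of $\BN^0(K;R)$ — to $[s_{\mathcal O}]$ and to $[s_{\mathcal O}]+\varepsilon_{q-2}[I(s_{\mathcal O})]=[s_{\mathcal O}]\pm[s_{-\mathcal O}]$ respectively — hence each is injective with image a rank-one direct summand of $\BN^0(K;R)$. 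Taking $q=s^\Q+1$, and using $\tBN^0(K;R)_{s^\Q+2}=0$, we get $\BN^0(K;R)_{s^\Q+1}=\sigma_{s^\Q+1,\ast}\bigl(\tBN^0(K;R)_{s^\Q}\bigr)$. Now: if $\BN^\ast(K;R)$ is graded torsion free, the previous paragraph makes this a rank-one direct summand of $\BN^0(K;R)$; as $\sigma_{s^\Q+1,\ast}$ identifies $\tBN^0(K;R)$ with a direct summand of $\BN^0(K;R)$, its submodule $\tBN^0(K;R)_{s^\Q}$ must be a direct summand of $\tBN^0(K;R)\cong R$, hence (being non-zero) all of it, so $\tBN^\ast(K;R)$ is graded torsion free. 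Conversely, if $\tBN^\ast(K;R)$ is graded torsion free, then $\tBN^0(K;R)_m=\tBN^0(K;R)$ for all $m\le s^\Q$, so the displayed formula gives $\BN^0(K;R)_q=R[s_{\mathcal O}]+R([s_{\mathcal O}]\pm[s_{-\mathcal O}])=\BN^0(K;R)$ for $q\le s^\Q-1$, while $\BN^0(K;R)_{s^\Q+1}=\sigma_{s^\Q+1,\ast}(\tBN^0(K;R))$ is a rank-one direct summand and $\BN^0(K;R)_q=0$ for $q>s^\Q+1$; hence every $\BN^0(K;R)^{(q)}$ is $0$ or free of rank one, so $\BN^\ast(K;R)$ is graded torsion free.

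\emph{Where the difficulty lies.} The delicate point is the construction of the sections $\sigma_q$: the short exact sequence above does \emph{not} split as a sequence of \emph{filtered} complexes over $\Z$ — it does only over $\F_2$, by Theorem~\ref{thm:splitspec} — so one cannot transport a single filtered splitting, and must instead work with the family $\sigma_q$ from Lemma~\ref{lm:pushqup}, whose sign $\varepsilon_{q-2}$ genuinely alternates with $q$, while checking that these level-wise splittings still pin down the induced filtration on $\BN^0(K;R)$ at every level. Conceptually, what the lemma says is that the obstruction to graded torsion-freeness lives entirely in the induced filtration on the rank-one module $\tBN^0(K)$ (which, unlike over a field, can be non-trivial over $\Z$), and the reduced and unreduced complexes see exactly the same obstruction.
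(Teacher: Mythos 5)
Your proof is correct and follows essentially the same route as the paper: both arguments hinge on localising at $p$, the short exact sequence of Proposition~\ref{prp:sesred}, and the sign $\varepsilon_j$ from Lemma~\ref{lm:pushqup} to push cocycles between the reduced and unreduced filtrations (the paper's $u\pm I(u)$ is exactly your $\sigma_{s^\Q+1}$ applied to a representative of $[s_{\mathcal O}]$). You have merely packaged the comparison more systematically, via the level-wise sections $\sigma_q$ and the identity $\BN^0(K;R)_q=i_\ast\bigl(\tBN^0(K;R)_{q+1}\bigr)+\sigma_{q\ast}\bigl(\tBN^0(K;R)_{q-1}\bigr)$, and you carry out in detail the converse implication that the paper leaves to the reader.
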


\begin{proof}
Note that $\BN^\ast(K;\Z)$ is graded $p$-torsion free if and only if $\BN^\ast(K;\Z_{(p)})$ is graded torsion free, and similarly in the reduced case. We therefore work over $\Z_{(p)}$ and omit coefficients.

Assume that $\tBN^\ast(K)$ is graded torsion free. Then $H^0(\tilde{\mathcal{F}}_{s^\Q})\to \tBN^0(K)$ is surjective, and there is a cocycle $u\in \tilde{\mathcal{F}}_{s^\Q}$ which is cohomologous to $s_\mathcal{O}$ in $\tCBN^0(K)$. We now view $u, I(u) \in \mathcal{F}_{s^\Q-1}$ and get that additionally $I(u)$ is cohomologous to $s_{-\mathcal{O}}$ in $\CBN^0(K)$. In particular, $H^0(\mathcal{F}_{s^\Q-1})\to \BN^0(K)$ is surjective.

By Lemma \ref{lm:pushqup} $u\pm I(u)\in \mathcal{F}_{s^\Q+1}$, and therefore
\[
\BN^0(K)^{(s^\Q-1)} \cong \Z_{(p)} \cong \BN^0(K)^{(s^\Q+1)}.
\]
That $\BN^\ast(K)$ graded torsion free implies $\tBN^\ast(K)$ is graded torsion free is similar, but easier, and left to the reader.
\end{proof}

\begin{theorem}\label{thm:sliceinv}
$s^\Z$ is a concordance invariant.
\end{theorem}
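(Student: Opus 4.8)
The plan is as follows. The first entry of $s^\Z(K)$ is $s^\Q(K)$, which is a concordance invariant by Rasmussen \cite{MR2729272}, so it is enough to show that the associated graded groups $\tBN^0(K;\Z)^{(q)}$ are unchanged under concordance, since $\gl(K)$ and every cardinality occurring in $s^\Z(K)$ is determined by these groups together with $s^\Q(K)$. I would prove this by showing that a concordance between two knots induces a filtered isomorphism of filtered degree $0$ on reduced integral Bar-Natan cohomology, and that such an isomorphism automatically identifies the induced filtrations term by term.

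Let $K_0$ and $K_1$ be smoothly concordant, let $\Sigma\subset S^3\times[0,1]$ be a smoothly embedded annulus realising the concordance, and let $\bar\Sigma$ be the reversed concordance from $K_1$ to $K_0$. Decomposing $\Sigma$ and $\bar\Sigma$ into Reidemeister moves and elementary Morse cobordisms, I would first assemble chain maps $\phi_\Sigma$ and $\phi_{\bar\Sigma}$ on the reduced integral Bar-Natan complexes: the Reidemeister moves contribute filtered chain homotopy equivalences (the Gauss eliminations proving invariance in Section 4 are filtration preserving), and the elementary Morse moves contribute the Bar-Natan Frobenius structure maps, which do not decrease the $q$-filtration and shift it by exactly the Euler characteristic contribution of the move. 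Since $\chi(\Sigma)=\chi(\bar\Sigma)=0$, the resulting maps are filtered of degree $0$, and they are functorial up to filtered chain homotopy (after fixing a basepoint arc on each cobordism), as in \cite{MR2320159, MR3189434, kotelskiy2019immersed}.

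The key point is the effect of these maps on the canonical generator. Following Rasmussen's computation \cite{MR2729272}, but carried out over $\Z$ using the basis $x_-, x_\shortmid$ in which the Bar-Natan multiplication and comultiplication are diagonal, one sees that $\phi_\Sigma$ sends $s_{\mathcal O_0}$ to $\pm s_{\mathcal O_1}$ plus a coboundary plus terms of strictly higher $q$-filtration, with a unit coefficient (the genus-$0$ instance of this computation; for the Bar-Natan operations the coefficient is in fact $\pm 1$ for a connected cobordism of any genus). Now $\tBN^0(K_i;\Z)\cong\Z$ is torsion free — this follows from the universal coefficient theorem together with $\tBN^1(K_i;\Z)=0$ and the one-dimensionality of $\tBN^0(K_i;\F)$ over every field $\F$ — and the class $[s_{\mathcal O_i}]$ generates it and, by the proof of Proposition \ref{prp:reduceds}, lies in the top filtration piece $\tBN^0(K_i;\Z)_{s}$, where $s:=s^\Q(K_0)=s^\Q(K_1)$. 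Since $\tBN^0(K_i;\Z)_{s+2}=0$ by Remark \ref{rem:gradedreduced}, those higher-filtration terms die in cohomology, so $\phi_\Sigma$ carries the generator of $\tBN^0(K_0;\Z)$ to $\pm$ the generator of $\tBN^0(K_1;\Z)$, hence is an isomorphism; the same holds for $\phi_{\bar\Sigma}$. As $\phi_\Sigma$ and $\phi_{\bar\Sigma}$ are filtered of degree $0$, under the identifications $\tBN^0(K_i;\Z)\cong\Z$ each filtration subgroup $\tBN^0(K_i;\Z)_q$ becomes a subgroup $m_i(q)\Z\subseteq\Z$, and the two maps being filtration preserving forces $m_1(q)\mid m_0(q)$ and $m_0(q)\mid m_1(q)$, so $m_0(q)=\pm m_1(q)$ for every $q$. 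Therefore $\tBN^0(K_0;\Z)^{(q)}\cong\tBN^0(K_1;\Z)^{(q)}$ for all $q$, which gives $\gl(K_0)=\gl(K_1)$ and equality of all cardinalities appearing in $s^\Z$, that is, $s^\Z(K_0)=s^\Z(K_1)$.

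I expect the main obstacle to be purely technical: one must set up cobordism maps for the \emph{reduced} Bar-Natan complex over $\Z$ as filtered chain maps that are functorial up to filtered chain homotopy (standard after Bar-Natan for the unreduced integral theory, with the reduced and integral refinements available from \cite{kotelskiy2019immersed, MR2320159, MR3189434} but needing to be recorded carefully), and — the genuinely delicate part — one must carry out Rasmussen's analysis of the effect of an elementary cobordism on the canonical generator over $\Z$, checking that no denominators appear and that the error terms genuinely lie in strictly higher $q$-filtration, which is where the diagonalisation of the Bar-Natan Frobenius operations in the $x_-, x_\shortmid$ basis does the work. Everything after that is formal.
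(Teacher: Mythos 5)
Your proof is correct and follows essentially the same strategy as the paper's: produce filtered degree-$0$ chain maps in both directions that are isomorphisms on (reduced, integral) Bar-Natan cohomology, and then use the resulting two containments (your divisibility of $m_0(q)$ and $m_1(q)$) to conclude that the filtration subgroups, and hence the associated graded groups, coincide. The only difference is that where you reconstruct the functoriality and the effect on the canonical generator from scratch, the paper obtains the filtered maps $\varphi,\psi$ directly by citing \cite[Thm.\,2.3, Prop.\,2.4]{MR3189434} and then restricts to the reduced complex by choosing a basepoint arc avoiding births and deaths.
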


\begin{proof}
Let $C$ be a cylinder between the knots $K$ and $K'$ which makes them concordant. By \cite[Thm.2.3, Prop.2.4]{MR3189434} there exist filtration preserving chain maps
\[
\varphi\colon \CBN^\ast(K;\Z)\to \CBN^\ast(K';\Z) \mbox{ and } \psi\colon \CBN^\ast(K';\Z)\to \CBN^\ast(K;\Z)
\]
which induce isomorphisms on cohomology. By picking a path of basepoints on $C$ that stays away from births and deaths, we also get this statement on the reduced Bar-Natan complexes. 

Consider the diagram
\[
\begin{tikzpicture}
\node at (0,0) {$\tBN^\ast(K;\Z)$};
\node at (4,0) {$\tBN^\ast(K';\Z)$};
\node at (0,1.5) {$H^0(\tilde{\mathcal{F}}_q(K);\Z)$};
\node at (4,1.5) {$H^0(\tilde{\mathcal{F}}_q(K');\Z)$};
\draw[->] (1.1,1.6) -- node [above] {$\varphi$} (2.8,1.6);
\draw[<-] (1.1, 1.4) -- node [below] {$\psi$} (2.8, 1.4);
\draw[->] (0,1.2) -- (0,0.3);
\draw[->] (4,1.2) -- node [right] {$i_K$} (4, 0.3);
\draw[->] (0.9,0) -- node [above] {$\cong$} (3.05,0);
\end{tikzpicture}
\]
After identifying $\tBN^\ast(K;\Z)\cong \Z \cong \tBN^\ast(K';\Z)$ we get from $\phi$ that $\tBN^0(K;\Z)_q\subset \tBN^0(K';\Z)_q$, and from $\psi$ that $\tBN^0(K';\Z)_q\subset \tBN^0(K;\Z)_q$. Hence the associated graded cohomology groups $\tBN^0(K;\Z)^{(q)}\cong \tBN^0(K';\Z)^{(q)}$ for all $q$. The result follows.
\end{proof}

%
%

A slight variation of this argument gives a lower bound on the smooth slice genus $g_4(K)$, taking the graded length into account. We omit the details.

\begin{theorem}\label{thm:genusbound}
Let $K$ be a knot. Then
\[
\pushQED{\qed} 
g_4(K) \geq |s^\Q(K)/2 - \gl(K)|. \qedhere
\popQED
\]
\end{theorem}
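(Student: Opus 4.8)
The plan is to re-run the proof of Theorem~\ref{thm:sliceinv} with a slice surface in place of a concordance cylinder, keeping track of how far the induced maps move the filtration. Write $g = g_4(K)$ and let $F\subset B^4$ be a smooth connected surface of genus $g$ with $\partial F = K$. Removing a small open disk from $F$ and pushing the resulting boundary circle slightly into the interior yields a connected cobordism $\Sigma$, with two boundary components, from the unknot $U$ to $K$; reversing it gives a connected genus-$g$ cobordism $\bar\Sigma$ from $K$ to $U$. In both cases the Euler characteristic is $-2g$.

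As in the proof of Theorem~\ref{thm:sliceinv}, picking a path of basepoints avoiding births and deaths, I would use \cite[Thm.2.3, Prop.2.4]{MR3189434} to obtain chain maps on the reduced Bar-Natan complexes over $\Z$
\[
\varphi\colon \tCBN^\ast(U;\Z)\to\tCBN^\ast(K;\Z),\qquad \psi\colon \tCBN^\ast(K;\Z)\to\tCBN^\ast(U;\Z),
\]
which are no longer filtration preserving but now satisfy $\varphi(\tilde{\mathcal F}_q(U))\subseteq\tilde{\mathcal F}_{q-2g}(K)$ and $\psi(\tilde{\mathcal F}_q(K))\subseteq\tilde{\mathcal F}_{q-2g}(U)$, since the cobordisms have Euler characteristic $-2g$. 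The point on which everything hinges is that $\varphi$ and $\psi$ induce isomorphisms $\Z\to\Z$ on $H^0$. Over $\Q$ this is the classical fact that a connected cobordism between knots induces a nonzero, hence bijective, map on reduced Lee homology, which coincides with reduced Bar-Natan homology in characteristic zero. To pass to $\Z$, note that $\tBN^0(\,\cdot\,;\Z)\cong\Z$ is torsion free, so by the Universal Coefficient Theorem $\tBN^0(\,\cdot\,;\Q)=\tBN^0(\,\cdot\,;\Z)\otimes\Q$ and $\tBN^0(\,\cdot\,;\Z)$ sits inside it as $\Z\subset\Q$; hence $\varphi_\ast$ and $\psi_\ast$ over $\Z$ are the restrictions of the rational isomorphisms, i.e. multiplication by $\pm1$.

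Now I would identify $\tBN^0(U;\Z)\cong\Z\cong\tBN^0(K;\Z)$ by choosing generators. The filtration shift together with $\varphi_\ast=\pm1=\psi_\ast$ gives, exactly as in Theorem~\ref{thm:sliceinv}, $\tBN^0(U;\Z)_q\subseteq\tBN^0(K;\Z)_{q-2g}$ and $\tBN^0(K;\Z)_q\subseteq\tBN^0(U;\Z)_{q-2g}$ for all $q$. From the definition of $\gl(K)$ (and exhaustiveness of the filtration) one has $\tBN^0(K;\Z)_q=\Z$ exactly when $q\leq s^\Q(K)-2\gl(K)$, while for the unknot $\tBN^0(U;\Z)_q=\Z$ exactly when $q\leq 0$. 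Taking $q=0$ in the first inclusion forces $\tBN^0(K;\Z)_{-2g}=\Z$, hence $-2g\leq s^\Q(K)-2\gl(K)$, i.e. $g\geq\gl(K)-s^\Q(K)/2$. Taking $q=s^\Q(K)-2\gl(K)$ in the second inclusion forces $\tBN^0(U;\Z)_{s^\Q(K)-2\gl(K)-2g}=\Z$, hence $s^\Q(K)-2\gl(K)-2g\leq 0$, i.e. $g\geq s^\Q(K)/2-\gl(K)$. Combining the two inequalities gives $g_4(K)\geq|s^\Q(K)/2-\gl(K)|$; applying this to $K$ and to $\overline K$ when $K$ is slice, and using $s^\Q(\overline K)=-s^\Q(K)$, also recovers $\gl(K)=0$ for slice knots.

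I expect the main obstacle to be exactly the claim that the integral cobordism maps are \emph{isomorphisms} on $H^0$, not merely injective: injectivity alone yields only the classical bound $g_4(K)\geq|s^\Q(K)|/2$ together with the single inequality $g_4(K)\geq s^\Q(K)/2-\gl(K)$ (whose proof above does not use the $\pm1$ statement), and the second inequality $g_4(K)\geq\gl(K)-s^\Q(K)/2$ genuinely needs $\varphi_\ast$ to carry a generator to a generator. Once that is in hand, the remainder is the bookkeeping of Theorem~\ref{thm:sliceinv}, with the genus of the cobordism entering only through the replacement of ``filtration preserving'' by ``shifts the filtration by $-2g$''.
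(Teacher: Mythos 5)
Your overall strategy is exactly what the paper has in mind when it says the genus bound follows from ``a slight variation'' of the concordance argument: puncture a genus-$g$ slice surface to get connected genus-$g$ cobordisms $U\to K$ and $K\to U$, obtain the induced chain maps on reduced Bar-Natan complexes (with a marked arc avoiding births and deaths) shifting the filtration by $-2g=\chi$, and read off the two inequalities from the resulting inclusions of filtration levels. The bookkeeping is correct, including the key identification that $\tBN^0(K;\Z)_q=\Z$ exactly when $q\le s^\Q(K)-2\gl(K)$.

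The one genuine gap is the step that makes $\varphi_\ast$ and $\psi_\ast$ equal to $\pm1$ on $H^0\cong\Z$. The inference ``$\varphi_\ast\otimes\Q$ is an isomorphism and $\tBN^0$ is torsion free, hence $\varphi_\ast$ is $\pm1$'' is a non sequitur: multiplication by any nonzero integer $n$ on $\Z$ is the restriction of a rational isomorphism, so the universal coefficient argument only gives injectivity of $\varphi_\ast$ and $\psi_\ast$. As you yourself observe, injectivity is enough for $g_4(K)\ge s^\Q(K)/2-\gl(K)$ (the unknot's filtration is all-or-nothing), but $g_4(K)\ge\gl(K)-s^\Q(K)/2$ genuinely needs $\varphi_\ast$ to carry a generator to a generator. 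The correct source of the $\pm1$ statement is Rasmussen's explicit description of the cobordism maps on the canonical generators: in the diagonalizing basis $x_-,x_\shortmid$ set up in Section~2, each elementary cobordism (birth, death, merge, split) is given by a matrix with entries in $\{0,\pm1\}$, so a connected cobordism between knots sends $[s_\mathcal{O}]$ to $\pm[s_{\mathcal{O}'}]$ already over $\Z$, not just over $\Q$. This is the integral content behind \cite[Prop.2.4]{MR3189434}, and it descends to the reduced complex because the basepoint arc makes the cobordism maps $A$-linear, hence preserving $x\CBN^\ast=\tCBN^\ast$. With that substituted for the UCT step, the rest of the proof goes through as you wrote it.
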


Theorems \ref{thm:sliceinv} and \ref{thm:genusbound} combine to a proof of Theorem \ref{thm:secondmain}.

\section{The Lipshitz-Sarkar refinements of Rasmussen's invariant}

In \cite{MR3189434} Lipshitz and Sarkar showed how stable cohomology operations on singular cohomology give rise to refinements of the various $s$-invariants. While they were able to make interesting calculations for the second Steenrod square, more effective calculations can so far only be done on Bockstein homomorphisms, with only the first Steenrod square being implemented thus far.

To describe these refinements, assume that $\alpha\colon \tilde{H}^\ast(\cdot;\F) \to \tilde{H}^{\ast+n_\alpha}(\cdot; \F)$ is a stable cohomology operation on singular cohomology with coefficients in a field $\F$. The suspension spectrum defined in \cite{MR3230817} turns this into a cohomology operation $\alpha\colon H_{\Kh}^{i,q}(K;\F)\to H_{\Kh}^{i+n_\alpha,q}(K,\F)$ for every $q$. Therefore we have a zig-zag of maps
\[
 H_{\Kh}^{-n_\alpha,q}(K;\F) \stackrel{\alpha}{\longrightarrow} H_{\Kh}^{0,q}(K;\F) \longleftarrow H^0(\mathcal{F}_q;\F) \longrightarrow \BN^0(K;\F)\cong \F^2.
\]
Consider the following configurations related to this zig-zag:
\begin{equation}\label{eq:fullconfigs}
\begin{tikzpicture}[baseline={([yshift=-.5ex]current bounding box.center)}]
\node at (0,0) {$\langle \tilde{a},\tilde{b}\rangle$};
\node at (3,0) {$\langle \hat{a}, \hat{b} \rangle$};
\node at (6,0) {$\langle a,b\rangle$};
\node at (9,0) {$\langle \bar{a},\bar{b}\rangle$};
\node at (0,1) {$H_{\Kh}^{-n_\alpha,q}(K;\F)$};
\node at (3,1) {$H_{\Kh}^{0,q}(K;\F)$};
\node at (6,1) {$H^0(\mathcal{F}_q;\F)$};
\node at (9,1) {$\BN^0(K;\F)$};
\node at (0,2) {$\langle \tilde{a}\rangle$};
\node at (3,2) {$\langle \hat{a}\rangle$};
\node at (6,2) {$\langle a\rangle$};
\node at (9,2) {$\langle \bar{a}\rangle\not=0$};
\draw[right hook->] (0,0.25) -- (0,0.75);
\draw[right hook->] (3,0.25) -- (3,0.75);
\draw[right hook->] (6,0.25) -- (6,0.75);
\draw[-] (9.05,0.25) -- (9.05,0.75);
\draw[-] (8.95,0.25) -- (8.95,0.75);
\draw[right hook->] (0,1.75) -- (0,1.25);
\draw[right hook->] (3,1.75) -- (3,1.25);
\draw[right hook->] (6,1.75) -- (6,1.25);
\draw[right hook->] (9,1.75) -- (9,1.25);
\draw[->] (0.5,0) -- (2.5,0);
\draw[<-] (3.5,0) -- (5.5,0);
\draw[->] (6.5,0) -- (8.5,0);
\draw[->] (0.4,2) -- (2.6,2);
\draw[<-] (3.4,2) -- (5.6,2);
\draw[->] (6.4,2) -- (8.3,2);
\draw[->] (1.1,1) -- node[above] {$\alpha$} (2.1,1);
\draw[<-] (3.95,1) -- node[above] {$p$} (5.15,1);
\draw[->] (6.85,1) -- node[above] {$i$} (8.1,1);
\end{tikzpicture}
\end{equation}

\begin{definition}
Call an odd integer $q$ \em $\alpha$-half-full\em, if there exist $a\in H^0(\mathcal{F}_q;\F)$ and $\tilde{a}\in H_{\Kh}^{-n_\alpha,q}(K;\F)$ such that $p(a)=\alpha(\tilde{a})$, and such that $i(a)=\bar{a}\not=0$. That is, there exists a configuration as in the upper two rows of (\ref{eq:fullconfigs}).

Call an odd integer $q$ \em $\alpha$-full\em, if there exist $a,b\in H^0(\mathcal{F}_q;\F)$ and $\tilde{a},\tilde{b}\in H_{\Kh}^{-n_\alpha,q}(K;\F)$ such that $p(a)=\alpha(\tilde{a})$, $p(b)=\alpha(\tilde{b})$, and $i(a),i(b)$ generate $\BN^0(K;\F)$. That is, there exists a configuration as in the lower two rows of (\ref{eq:fullconfigs}).
\end{definition}

Lipshitz and Sarkar \cite{MR3189434} define their refinements of the $s$-invariant as
\begin{definition}
Let $K$ be a knot and $\alpha$ a stable cohomology operation on singular cohomology with coefficients in $\F$, then $r^\alpha_+,r^\alpha_-, s^\alpha_+, s^\alpha_-\in \Z$ are defined as follows.
\begin{align*}
r^\alpha_+(K) &= \max\{q\in 2\Z+1\,|\, q \mbox{ is $\alpha$-half-full}\}+1\\
s^\alpha_+(K) &= \max\{q\in 2\Z+1\,|\, q \mbox{ is $\alpha$-full}\}+3.
\end{align*}
If $\overline{K}$ denotes the mirror of $K$, we also set
\begin{align*}
r^\alpha_-(K) &= -r^\alpha_+(\overline{K})\\
s^\alpha_-(K) &= -s^\alpha_+(\overline{K}).
\end{align*}
\end{definition}

Since \cite{MR3230817} contains a suspension spectrum for reduced Khovanov homology, these definitions carry over to the reduced $s$-invariant. 

\begin{definition}
Let $K$ be a knot. Call an odd integer $q$ {\em reduced $\alpha$-full}, if here exist $a\in H^0(\tilde{\mathcal{F}}_q;\F)$ and $\tilde{a}\in \tilde{H}_{\Kh}^{-n_\alpha,q}(K;\F)$ such that $\tilde{p}(a)=\alpha(\tilde{a})$, and such that $\tilde{\imath}(a)\not=0$, where $\tilde{p}\colon H^0(\tilde{\mathcal{F}}_q;\F)\to \tilde{H}_{\Kh}^{0,q}(K;\F)$ and $\tilde{\imath}\colon H^0(\tilde{\mathcal{F}}_q;\F)\to \tBN^0(K;\F)$.
 
We set
\begin{align*}
\tilde{s}^\alpha_+(K) &= \max\{q\in 2\Z \mid q \mbox{ is reduced $\alpha$-full}\}+2 \\
\tilde{s}^\alpha_-(K) &= -\tilde{s}^\alpha_+(\overline{K}).
\end{align*}
\end{definition}

It is shown in \cite{MR3189434} that $s^\alpha_+(K), r^\alpha_+(K)\in \{s^\F(K), s^\F(K)+2\}$, and the proof carries over to show that $\tilde{s}^\alpha_+(K)\in \{s^\F(K), s^\F(K)+2\}$. The interest in these refinements comes from the fact that they give lower bounds on the slice genus of a knot, so we introduce the following notation.

\begin{definition}
Let $K$ be a knot. We say that $s^\alpha_+$, resp.\ $r^\alpha_+$, resp.\ $\tilde{s}^\alpha_+$, is {\em non-trivial for $K$}, if $s^\alpha_+(K)\not= s^\F(K)$, resp.\ $r^\alpha_+(K)\not= s^\F(K)$, resp.\ $\tilde{s}^\alpha_+(K)\not= s^\F(K)$.
\end{definition}

We note that non-triviality of any of these invariants need not improve the lower bound on the slice genus, but triviality will definitely not improve this lower bound from $s^\F$.

\begin{proposition}\label{prp:nontrvop}
Let $K$ be a knot, and $\alpha$ a stable cohomology operation.
\begin{enumerate}
\item If $r^\alpha_+$ is non-trivial for $K$, then $\tilde{s}^\alpha_+$ is non-trivial for $K$.
\item If $\tilde{s}^\alpha_+$ is non-trivial for $K$, then $s^\alpha_+$ is non-trivial for $K$.
\end{enumerate}
\end{proposition}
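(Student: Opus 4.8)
The plan is to unwind the definitions of the three non-triviality conditions and relate the unreduced zig-zag configurations to the reduced one via the splitting from Section 3. Recall that all three of $r^\alpha_+$, $\tilde{s}^\alpha_+$, $s^\alpha_+$ live in $\{s^\F(K), s^\F(K)+2\}$, so non-triviality of any one of them means exactly that the relevant ``witness configuration'' exists at the quantum degree $q = s^\F(K)+1$ (for $r^\alpha_+$ and $\tilde{s}^\alpha_+$, which are shifted by $1$ resp.\ $2$ from the top odd degree, this forces the configuration to exist at the odd degree $q = s^\F(K)-1$; for $s^\alpha_+$, shifted by $3$, at $q = s^\F(K)-1$ as well). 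So in both implications I am comparing the existence of a reduced $\alpha$-full configuration, an $\alpha$-half-full configuration, and an $\alpha$-full configuration, all at the \emph{same} odd degree $q$.

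For part (1), suppose $q$ is $\alpha$-half-full, witnessed by $a\in H^0(\mathcal{F}_q;\F)$ and $\tilde a\in H^{-n_\alpha,q}_{\Kh}(K;\F)$ with $p(a)=\alpha(\tilde a)$ and $i(a)\neq 0$ in $\BN^0(K;\F)$. I would apply the multiplication-by-$x$ map to pass to the reduced complex: $x$ sends $\mathcal{F}_q$ into $\tilde{\mathcal{F}}_{q-1}$ (as in the proof of Proposition \ref{prp:reduceds}), but more to the point, after possibly replacing $a$ by $a \pm I(a)$ using Lemma \ref{lm:pushqup} — which keeps us in $\mathcal{F}_q$ up to one filtration step — the element $xa$ (or its image) lives in $\tilde{\mathcal{F}}_q$, and $i(a)\neq 0$ translates into $\tilde\imath(xa)\neq 0$ in $\tBN^0(K;\F)$ by the compatibility of the splitting with the filtration, and $p(a)=\alpha(\tilde a)$ pushes forward to $\tilde p(xa)=\alpha(x\tilde a)=\alpha(\tilde{\tilde a})$ for the corresponding reduced Khovanov class, using that $\alpha$ is natural and the reduced suspension spectrum of \cite{MR3230817} is compatible with the unreduced one under the basepoint action. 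This produces a reduced $\alpha$-full configuration at $q$, so $\tilde{s}^\alpha_+$ is non-trivial.

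For part (2), suppose $q$ is reduced $\alpha$-full, witnessed by $a\in H^0(\tilde{\mathcal{F}}_q;\F)$ and $\tilde a$ with $\tilde p(a)=\alpha(\tilde a)$, $\tilde\imath(a)\neq 0$. I would use the splitting $S\colon \tCBN^\ast(K;\F)\to\CBN^\ast(K;\F)$, $S(xu)=xu+I(xu)$, which by Lemma \ref{lm:pushqup} restricts to $S\colon\tilde{\mathcal{F}}_{q-1}\to\mathcal{F}_q$ (more precisely to the filtration levels lining up as in the short exact sequence $0\to\tilde{\mathcal{F}}_{q+1}\to\mathcal{F}_q\to\tilde{\mathcal{F}}_{q-1}\to 0$). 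Set $b := S(a) \in H^0(\mathcal{F}_?;\F)$ and also use $i(b)$ together with $i(S(a)\text{-type partner})$: the point is that the reduced class $a$ together with its image under $T$ (which recovers the other orientation class $s_{-\mathcal O}$) gives two classes in $\BN^0(K;\F)$ that generate it, each lifted to the right filtration level, and each hit by $\alpha$ applied to the corresponding lift of $\tilde a$ (and its $I$-image). Thus I get a full configuration in the sense of the lower two rows of (\ref{eq:fullconfigs}) at the same $q$, giving non-triviality of $s^\alpha_+$.

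The main obstacle I anticipate is bookkeeping the filtration shifts precisely: multiplication by $x$ and the maps $i$, $T$, $S$ each move quantum degree by $\pm 1$ or $\pm 2$, and one must check that the witness configuration lands in exactly the filtration level needed so that $r^\alpha_+$, $\tilde{s}^\alpha_+$, $s^\alpha_+$ all detect it at the \emph{same} shifted degree — this is where the constants $+1$, $+2$, $+3$ in the definitions have to be reconciled, and where Lemma \ref{lm:pushqup} (including the sign $\varepsilon_j$ and the relation $\varepsilon_j = -\varepsilon_{j+2}$) does the real work. A secondary point needing care is the naturality of $\alpha$ under the basepoint ($x$-) action on the Khovanov spectra from \cite{MR3230817}: I would cite the relevant compatibility there rather than reprove it, but it must be invoked correctly so that $\alpha$ commutes with the reduced-to-unreduced comparison maps.
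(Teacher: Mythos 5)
Your overall plan—push witness configurations between the unreduced and reduced complexes via $x$-multiplication, $I$, and the splitting from Section~3—is the right family of ideas, and for part~(1) you are essentially doing what the paper does (the paper uses the third column of diagram~(\ref{eq:bigdiag}), which is the map $T(u) = xI(u)$, at $q = s+1$). But there are two concrete problems.

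First, the bookkeeping of degrees is wrong. The three witnesses are \emph{not} at the same $q$: non-triviality of $r^\alpha_+$ is $\alpha$-half-fullness at $q = s+1$ (odd), non-triviality of $\tilde{s}^\alpha_+$ is reduced $\alpha$-fullness at $q = s$ (even), and non-triviality of $s^\alpha_+$ is $\alpha$-fullness at $q = s-1$ (odd), where $s = s^\F(K)$. The proof of~(1) must go from $s+1$ down to $s$ (which the map $T\colon \mathcal{F}_{s+1}\to \tilde{\mathcal{F}}_s$ does) and the proof of~(2) must go from $s$ down to $s-1$. Your assertion that ``$xa$ lives in $\tilde{\mathcal{F}}_q$'' is also off by one: $x\mathcal{F}_q = \tilde{\mathcal{F}}_{q-1}$, not $\tilde{\mathcal{F}}_q$.

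Second, and more seriously, your part~(2) relies on the splitting $S(xu)=xu+I(xu)$ restricting to $S\colon\tilde{\mathcal{F}}_{q-1}\to\mathcal{F}_q$. As stated in the paper just before Theorem~\ref{thm:splitspec}, this filtration-preserving property of $S$ holds \emph{only in characteristic $2$}: over general $\K$, Lemma~\ref{lm:pushqup} gives $u+\varepsilon_j I(u)\in\mathcal{F}_{j+2}$ with a sign $\varepsilon_j$ depending on $j\bmod 4$, and $S$ uses a fixed sign $+1$, so it does not raise the filtration unless $+1=-1$. Since Proposition~\ref{prp:nontrvop} is for an arbitrary $\F$, your argument breaks down outside $\F_2$. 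The paper avoids $S$ entirely: for~(2) it takes $a$ to be the image of the reduced witness under the \emph{inclusion} $\tilde{\mathcal{F}}_{s}\hookrightarrow\mathcal{F}_{s-1}$ (which works over any ring and gives $i(a)=[s_{\mathcal O}]$), and then takes $b\in H^0(\mathcal{F}_{s-1};\F)$ to be the image of a class from $H^0(\mathcal{F}_{s+1};\F)$, so that $p(b)=0\in H^{0,s-1}_{\Kh}(K;\F)$ is trivially in the image of $\alpha$ while $i(b)=[s_{\mathcal O}\pm s_{-\mathcal O}]$. This ``take $b$ from one filtration step up so that $p(b)$ vanishes automatically'' trick is the key missing idea in your write-up.
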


\begin{proof}
We have the following diagram:
\begin{equation}\label{eq:bigdiag}
\begin{tikzpicture}[baseline={([yshift=-.5ex]current bounding box.center)}]
\node at (0,0) {$\tilde{H}_{\Kh}^{-n_\alpha,q+1}(K;\F)$};
\node at (3,0) {$\tilde{H}_{\Kh}^{0,q+1}(K;\F)$};
\node at (6,0) {$H^0(\tilde{\mathcal{F}}_{q+1};\F)$};
\node at (9,0) {$\tBN^0(K;\F)$};
\node at (0,1.5) {$H_{\Kh}^{-n_\alpha,q}(K;\F)$};
\node at (3,1.5) {$H_{\Kh}^{0,q}(K;\F)$};
\node at (6,1.5) {$H^0(\mathcal{F}_q;\F)$};
\node at (9,1.5) {$\BN^0(K;\F)$};
\node at (0,3) {$\tilde{H}_{\Kh}^{-n_\alpha,q-1}(K;\F)$};
\node at (3,3) {$\tilde{H}_{\Kh}^{0,q-1}(K;\F)$};
\node at (6,3) {$H^0(\tilde{\mathcal{F}}_{q-1};\F)$};
\node at (9,3) {$\tBN^0(K;\F)$};
\draw[->] (1.3,0) -- node[above] {$\alpha$} (1.95,0);
\draw[<-] (4.05,0) -- node[above] {$\tilde{p}$} (5,0);
\draw[->] (7,0) -- node[above] {$\tilde{\imath}$} (8.1,0);
\draw[->] (1.1,1.5) -- node[above] {$\alpha$} (2.1,1.5);
\draw[<-] (3.95,1.5) -- node[above] {$p$} (5.15,1.5);
\draw[->] (6.85,1.5) -- node[above] {$i$} (8.1,1.5);
\draw[->] (1.3,3) -- node[above] {$\alpha$} (1.95,3);
\draw[<-] (4.05,3) -- node[above] {$\tilde{p}$} (5,3);
\draw[->] (7,3) -- node[above] {$\tilde{\imath}$} (8.1,3);
\draw[->] (0,0.3) -- (0, 1.2);
\draw[->] (3,0.3) -- (3, 1.2);
\draw[->] (6,0.3) -- (6, 1.2);
\draw[->] (9,0.3) -- (9, 1.2);
\draw[->] (0,1.8) -- (0, 2.7);
\draw[->] (3,1.8) -- (3, 2.7);
\draw[->] (6,1.8) -- (6, 2.7);
\draw[->] (9,1.8) -- (9, 2.7);
\end{tikzpicture}
\end{equation}
The third column is from the long exact sequence arising from Proposition \ref{prp:sesred}. In particular, the map $H^0(\mathcal{F}_q;\F)\to H^0(\tilde{\mathcal{F}}_{q-1};\F)$ is induced by $u\mapsto xI(u)$. But in $H^0(\tilde{\mathcal{F}}_{q-1}/\tilde{\mathcal{F}}_{q+1};\F)$ we have $[xI(u)] = \pm [xu]$ by Lemma \ref{lm:pushqup}. So up to sign, this agrees with the usual long exact sequence of reduced and unreduced Khovanov homology, and we can treat the whole diagram as commutative by \cite[\S 9]{MR3230817} (which shows that the stable cohomology operation part commutes).

Now let $s = s^\F(K)$, and assume that $r^\alpha_+$ is non-trivial for $K$. This means that $s+1$ is $\alpha$-half-full, so there exists $a\in H^0(\mathcal{F}_{s+1};\F)$ with $p(a)$ in the image of $\alpha$, and $i(a)$ non-zero in $\BN^0(K;\F)$. In particular, $i(a)$ is a non-zero multiple of $[s_\mathcal{O}\pm s_{-\mathcal{O}}]$, which remains non-zero in $\tBN^0(K;\F)$.

Using (\ref{eq:bigdiag}) with $q = s+1$, we see that $s$ is reduced $\alpha$-full, and therefore $\tilde{s}^\alpha_+$ is non-trivial for $K$.

If we assume that $\tilde{s}^\alpha_+$ is non-trivial for $K$, consider (\ref{eq:bigdiag}) with $q = s-1$. Since $s$ is reduced $\alpha$-full, we can find $a\in H^0(\mathcal{F}_{s-1};\F)$ such that $i(a)\in \BN^0(K;\F)$ is represented by $s_\mathcal{O}$, and $p(a)$ is in the image of $\alpha$. Now choose $b\in H^0(\mathcal{F}_{s-1};\F)$ to be in the image of $H^0(\mathcal{F}_{s+1};\F)$, and being represented by $s_\mathcal{O}\pm s_{-\mathcal{O}}$ in $\BN^0(K;\F)$. Then $i(a),i(b)$ generate $\BN^0(K;\F)$, and $p(b)=0\in H_{\Kh}^{0,s-1}(K;\F)$, which means that $s-1$ is $\alpha$-full. Hence $s^\alpha_+$ is non-trivial for $K$. 
\end{proof}

Because of Proposition \ref{prp:nontrvop} we can think of $s^\alpha_+$ as the strongest of these invariants. Calculations in \cite{MR3189434} show that $s^{\Sq^2}_+$ can be non-trivial while $\tilde{s}^{\Sq^2}_+$ is trivial for some knots. The reason is that the reduced Khovanov homology is not big enough to support a non-trivial second Steenrod square for these knots.

For $\Sq^1$ we do not know any example where $s^{\Sq^1}_+$ is non-trivial, while $\tilde{s}^{\Sq^1}_+$ is trivial. This is a bit surprising, since unreduced Khovanov homology contains much more $2$-torsion than reduced Khovanov homology. But as we will see, this torsion tends to not survive during the stages of the spectral sequence.

From the short exact sequence of reduced and unreduced Khovanov complexes
\[
0\longrightarrow \tilde{\mathcal{F}}_{q+1}/\tilde{\mathcal{F}}_{q+3} \longrightarrow \mathcal{F}_q/\mathcal{F}_{q+2}\longrightarrow \tilde{\mathcal{F}}_{q-1}/\tilde{\mathcal{F}}_{q+1} \longrightarrow 0
\]
we get a connecting homomorphism $\beta\colon \tilde{H}_{\Kh}^{n,q-1}(K;\Z)\to \tilde{H}_{\Kh}^{n+1, q+1}(K;\Z)$. We have another short exact sequence given by
\[
0\longrightarrow \tilde{\mathcal{F}}_{q+1}/\tilde{\mathcal{F}}_{q+3} \longrightarrow \tilde{\mathcal{F}}_{q-1}/\tilde{\mathcal{F}}_{q+3} \longrightarrow \tilde{\mathcal{F}}_{q-1}/\tilde{\mathcal{F}}_{q+1} \longrightarrow 0
\]
whose connecting homomorphism is $d_1\colon E_{1,\Z}^{n,q-1}\to E_{1,\Z}^{n+1,q+1}$, the first differential in the reduced spectral sequence with $\Z$-coefficients.

\begin{lemma} \label{lm:firstdiff}
We have
\[
\beta = \pm 2\,d_1\colon \tilde{H}_{\Kh}^{n,q-1}(K;\Z)\to \tilde{H}_{\Kh}^{n+1, q+1}(K;\Z).
\]
\end{lemma}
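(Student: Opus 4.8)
The plan is to realise the identity $\beta=\pm2\,d_1$ as an instance of the naturality of the connecting homomorphism: I will build a morphism from the first of the two displayed short exact sequences to the second whose map on the common kernel term $\tilde{\mathcal{F}}_{q+1}/\tilde{\mathcal{F}}_{q+3}$ is the identity and whose map on the common quotient term $\tilde{\mathcal{F}}_{q-1}/\tilde{\mathcal{F}}_{q+1}$ is multiplication by $\pm2$. Write $K=\tilde{\mathcal{F}}_{q+1}/\tilde{\mathcal{F}}_{q+3}$, $M=\mathcal{F}_q/\mathcal{F}_{q+2}$, $N=\tilde{\mathcal{F}}_{q-1}/\tilde{\mathcal{F}}_{q+3}$, $Q=\tilde{\mathcal{F}}_{q-1}/\tilde{\mathcal{F}}_{q+1}$, so the first sequence is $0\to K\xrightarrow{i}M\xrightarrow{T}Q\to0$ with $i$ induced by inclusion and $T$ induced by $u\mapsto xI(u)$ (this is the restriction of the sequence of Proposition~\ref{prp:sesred} to the filtration, cf.\ the discussion before Theorem~\ref{thm:splitspec}), and the second is $0\to K\xrightarrow{i'}N\xrightarrow{\pi}Q\to0$ with $i'$, $\pi$ the evident inclusion and quotient.

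First I would introduce the $\Z$-linear cochain map
\[
\Phi\colon \CBN^\ast(D;\Z)\to\CBN^\ast(D;\Z),\qquad \Phi(u)=x\bigl(u+\varepsilon_{q+2}I(u)\bigr),
\]
which is a cochain map because the basepoint action of $x$ commutes with the Bar-Natan differential (this is exactly what makes $\tCBN^\ast$ a subcomplex) and $I$ is a cochain map. Since $I$ preserves the filtration, $\Phi$ maps $\mathcal{F}_q$ into $x\mathcal{F}_q=\tilde{\mathcal{F}}_{q-1}$, and by Lemma~\ref{lm:pushqup} applied with $j=q+2$ it maps $\mathcal{F}_{q+2}$ into $x\mathcal{F}_{q+4}=\tilde{\mathcal{F}}_{q+3}$; hence $\Phi$ descends to a cochain map $\bar\Phi\colon M\to N$. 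Next I would check the two compatibility squares. For the left square, note that on $\tilde{\mathcal{F}}_{q+1}=x\mathcal{F}_{q+2}$ one has $xI(xv)=0$, because $I$ carries $x\CBN^\ast$ into $(1-x)\CBN^\ast$ (the map $I'$ interchanges $x_-=x$ and $x_\shortmid=1-x$) and $x(1-x)=0$ in $A$; thus $\Phi$ restricted to $\tilde{\mathcal{F}}_{q+1}$ is just multiplication by $x$, which is the identity there, so $\bar\Phi\circ i=i'$. For the right square, Lemma~\ref{lm:pushqup} gives, for $u\in\mathcal{F}_q$, the congruence $xI(u)\equiv-\varepsilon_q\,xu\pmod{\tilde{\mathcal{F}}_{q+1}}$, and using $\varepsilon_{q+2}=-\varepsilon_q$ we get
\[
\pi(\bar\Phi(u))\equiv xu+\varepsilon_{q+2}\bigl(-\varepsilon_q\,xu\bigr)=(1-\varepsilon_q\varepsilon_{q+2})\,xu=2\,xu\pmod{\tilde{\mathcal{F}}_{q+1}},
\]
whereas $T(u)=xI(u)\equiv-\varepsilon_q\,xu$, so that $\pi\circ\bar\Phi=-2\varepsilon_q\cdot T$ as maps $M\to Q$.

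Consequently $(\id_K,\ \bar\Phi,\ -2\varepsilon_q\cdot\id_Q)$ is a morphism of short exact sequences of cochain complexes from the first to the second, and naturality of the connecting homomorphism yields $\beta=-2\varepsilon_q\,d_1$ on $\tilde H_{\Kh}^{n,q-1}(K;\Z)$, i.e.\ $\beta=\pm2\,d_1$, which is the claim.

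The routine ingredients are that $x$ commutes with the differential and that the various filtration indices line up when $\Phi$ is passed to the quotients. The one step that needs genuine care is the sign bookkeeping built on Lemma~\ref{lm:pushqup}: verifying the congruence $xI(u)\equiv-\varepsilon_q\,xu$ modulo $\tilde{\mathcal{F}}_{q+1}$ together with $\varepsilon_{q+2}=-\varepsilon_q$, which is precisely what produces the crucial factor $1-\varepsilon_q\varepsilon_{q+2}=2$ (over $\F_2$ this collapses, consistently with Theorem~\ref{thm:splitspec}). I expect this sign analysis, and checking that $\bar\Phi$ genuinely descends to a cochain map between the correct subquotients, to be the main obstacle; once the morphism of short exact sequences is in place, the conclusion is formal.
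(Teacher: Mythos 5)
Your proof is correct and follows essentially the same route as the paper: you build the cochain map $\Phi(u)=x(u+\varepsilon_{q+2}I(u))$, show it yields a morphism from the short exact sequence with connecting map $\beta$ to the one with connecting map $d_1$ that is the identity on the kernel and multiplication by $-2\varepsilon_q=2\varepsilon_{q+2}$ on the quotient, and conclude by naturality. The paper's map $J$ and sign bookkeeping are literally the same as your $\Phi$ and $\tilde J$, just phrased with $\varepsilon_{q+2}$ in place of $-\varepsilon_q$.
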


\begin{proof}
Consider the following ladder.
\[
\begin{tikzpicture}
\node at (0,0) {$0$};
\draw[->] (0.3, 0) -- (1, 0);
\node at (2,0) {$\tilde{\mathcal{F}}_{q+1}/\tilde{\mathcal{F}}_{q+3}$};
\draw[->] (3,0) -- (4,0);
\node at (5,0) {$\tilde{\mathcal{F}}_{q-1}/\tilde{\mathcal{F}}_{q+3} $};
\draw[->] (6,0) -- (7,0);
\node at (8,0) {$\tilde{\mathcal{F}}_{q-1}/\tilde{\mathcal{F}}_{q+1} $};
\draw[->] (9,0) -- (9.7,0);
\node at (10,0) {$0$};
\node at (0,2) {$0$};
\draw[->] (0.3, 2) -- (1, 2);
\node at (2,2) {$\tilde{\mathcal{F}}_{q+1}/\tilde{\mathcal{F}}_{q+3}$};
\draw[->] (3,2) -- (4.2,2);
\node at (5,2) {$\mathcal{F}_q/\mathcal{F}_{q+2}$};
\draw[->] (5.8,2) -- (7,2);
\node at (8,2) {$\tilde{\mathcal{F}}_{q-1}/\tilde{\mathcal{F}}_{q+1} $};
\draw[->] (9,2) -- (9.7,2);
\node at (10,2) {$0$};
\draw[->] (2,1.7) -- node [right] {$=$} (2, 0.3);
\draw[->] (5, 1.7) -- node [right] {$J$} (5, 0.3);
\draw[->] (8, 1.7) -- node [right] {$\tilde{J}$} (8, 0.3);
\end{tikzpicture}
\]
where $J$ is given by $J(u) = x(u+\varepsilon_{q+2}I(u))$, with $\varepsilon_{q+2}\in \{\pm 1\}$ as in Lemma \ref{lm:pushqup}. In particular, for $u\in \mathcal{F}_{q+2}$ we have $J(u) \in \tilde{\mathcal{F}}_{q+3}$, so that $J$ is well defined.

If $xu\in \tilde{\mathcal{F}}_{q+1}$, then $J(xu) = xu+\varepsilon_{q+2}xI(xu) = xu$, and the left square commutes. The map $\mathcal{F}_q/\mathcal{F}_{q+2}\to \tilde{\mathcal{F}}_{q-1}/\tilde{\mathcal{F}}_{q+1}$ is induced by $T\colon \mathcal{F}_q\to\tilde{\mathcal{F}}_{q-1}$ so sends $u$ to $xI(u)$. As $u-\varepsilon_{q+2} I(u)\in \mathcal{F}_{q+2}$ for $u\in \mathcal{F}_q$, we get $J(u)$ represents $2xu\in \tilde{\mathcal{F}}_{q-1}/\tilde{\mathcal{F}}_{q+1}$, and $T(u)$ represents $\varepsilon_{q+2} xu\in \tilde{\mathcal{F}}_{q-1}/\tilde{\mathcal{F}}_{q+1}$.
So we make the right square commute by setting $\tilde{J}(xu) = 2\,\varepsilon_{q+2} xu$. The result now follows from the naturality of the connecting homomorphisms.
\end{proof}

\begin{remark}
Lemma \ref{lm:firstdiff} gives a reason why 2-torsion is so prevalent in unreduced Khovanov homology. It can also be used to show that $1$-thin links do not contain torsion of order $2^k$ with $k>1$, recovering a theorem of Shumakovitch \cite{shumakovitch2018torsion}.
\end{remark}

\begin{theorem}
Let $K$ be a knot such that $\tilde{H}_{\Kh}^{0,q}(K;\Z)$ does not contain $2$-torsion for $q=s, s-2$, where $s=s^{\F_2}(K)$. Then $s^{\Sq^1}_+(K) = s$.
\end{theorem}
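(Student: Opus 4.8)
The goal is to show that, under the stated torsion hypotheses, the $\Sq^1$-refinement $s^{\Sq^1}_+(K)$ cannot be non-trivial, i.e.\ it equals $s=s^{\F_2}(K)$. By Proposition \ref{prp:nontrvop}, non-triviality of $s^{\Sq^1}_+$ is implied by, but in fact for the purposes of an upper bound I instead want to argue directly: since $s^{\Sq^1}_+(K)\in\{s,s+2\}$, it suffices to rule out the value $s+2$, which by definition means ruling out that $q=s-1$ is $\Sq^1$-full. So the plan is: assume $s-1$ is $\Sq^1$-full and derive a contradiction with the hypothesis that $\tilde H_{\Kh}^{0,q}(K;\Z)$ has no $2$-torsion for $q=s$ and $q=s-2$.

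First I would translate $\Sq^1$-fullness into the reduced picture. $\Sq^1$ is the Bockstein associated to $0\to\F_2\to\Z/4\to\F_2\to0$; over $\F_2$ it factors through the integral Bockstein $\tilde H_{\Kh}^{-1,q}(K;\F_2)\to\tilde H_{\Kh}^{0,q}(K;\Z)$ reduced mod $2$, and dually detects $2$-torsion. The key structural input is Lemma \ref{lm:firstdiff}: the connecting homomorphism $\beta$ relating reduced and unreduced Khovanov homology equals $\pm 2\,d_1$. I would combine this with the long exact sequence of Proposition \ref{prp:sesred} and the commuting ladder (\ref{eq:bigdiag}) to express the relevant $\Sq^1$-classes in terms of the reduced spectral sequence differentials $d_1$ on $E_{1,\Z}^{\ast,\ast}=\tilde H_{\Kh}^{\ast,\ast}(K;\Z)$. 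The upshot I expect is: a configuration witnessing that $q=s-1$ is $\Sq^1$-full forces a class in $\tilde H_{\Kh}^{0,s}(K;\F_2)$ or $\tilde H_{\Kh}^{0,s-2}(K;\F_2)$ that is in the image of $\Sq^1$ — equivalently, a nonzero $2$-torsion element in the integral group $\tilde H_{\Kh}^{0,s}(K;\Z)$ or $\tilde H_{\Kh}^{0,s-2}(K;\Z)$ — via the Bockstein/$\beta=\pm2d_1$ identification. That contradicts the hypothesis.

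More concretely, the argument should go through the comparison in the proof of Proposition \ref{prp:reduceds} and \ref{prp:nontrvop}: an $\Sq^1$-full configuration at $q=s-1$ gives $a,b\in H^0(\mathcal F_{s-1};\F_2)$ with $i(a),i(b)$ generating $\BN^0(K;\F_2)$ and $p(a),p(b)\in\im\Sq^1\subset H_{\Kh}^{0,s-1}(K;\F_2)$. Pushing down via the ladder (\ref{eq:bigdiag}), one of these (after adjusting by a class coming from $\mathcal F_{s+1}$, which maps to $0$ in $H_{\Kh}^{0,s-1}$) descends to a reduced class $\tilde a\in H^0(\tilde{\mathcal F}_{s-2};\F_2)$ with $\tilde\imath(\tilde a)$ generating $\tBN^0(K;\F_2)$ and $\tilde p(\tilde a)\in\im\Sq^1\subset\tilde H_{\Kh}^{0,s-2}(K;\F_2)$. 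Thus $s-2$ would be reduced $\Sq^1$-full, i.e.\ $\tilde s^{\Sq^1}_+(K)=s$ while $s^{\F_2}(K)=s$ forces $\tilde s^{\Sq^1}_+$ to be non-trivial only if we can also realize the generator at the higher filtration — but the real point is that $\tilde p(\tilde a)\neq 0$ lies in $\im\Sq^1$, hence is the mod-$2$ reduction of a $2$-torsion element (via the integral Bockstein and $\beta=\pm 2 d_1$), contradicting that $\tilde H_{\Kh}^{0,s-2}(K;\Z)$ is $2$-torsion free. The parallel case using the $q=s$ strand of the ladder handles the half-full-type contribution and invokes the $q=s$ torsion hypothesis.

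The main obstacle I anticipate is bookkeeping the filtration shifts precisely: $\Sq^1$-fullness is stated at $q=s-1$ but preserves the $q$-grading on Khovanov homology, so tracking which integral group ($s$ versus $s-2$) actually receives the forced torsion requires care with the two short exact sequences feeding (\ref{eq:bigdiag}) and with Lemma \ref{lm:pushqup}'s sign $\varepsilon_j$. The other delicate point is making rigorous the step "$\tilde p(\tilde a)\in\im\Sq^1$ implies a genuine nonzero integral $2$-torsion class in degree $(0,s-2)$": this needs that $\Sq^1$ on the Khovanov spectrum agrees with the classical Bockstein (which is \cite[\S9]{MR3230817}, already cited in the proof of Proposition \ref{prp:nontrvop}) together with the fact that the image of $\Sq^1$ in degree $0$ is nonzero precisely when the relevant integral group has $2$-torsion. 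Once those two technical points are in place, the contradiction with the hypothesis is immediate and gives $s^{\Sq^1}_+(K)=s$.
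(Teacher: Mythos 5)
Your proposal correctly identifies the key ingredients: the Bockstein characterization of $\Sq^1$, the long exact sequence relating reduced and unreduced Khovanov homology (Proposition \ref{prp:sesred}), Lemma \ref{lm:firstdiff} giving $\beta = \pm 2 d_1$, and the splitting in Theorem \ref{thm:splitspec}. However, the logical architecture you build on top of them has a genuine gap, and in fact the specific assertion on which your contradiction rests is false under the theorem's hypotheses.

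The crux of your argument is the claim that the class $\tilde a \in H^0(\tilde{\mathcal{F}}_{s-2};\F_2)$ obtained by pushing $a$ through the ladder (\ref{eq:bigdiag}) has $\tilde p(\tilde a)\neq 0$; you then conclude that a nonzero element of $\im\Sq^1 \subset \tilde H_{\Kh}^{0,s-2}(K;\F_2)$ forces $2$-torsion in $\tilde H_{\Kh}^{0,s-2}(K;\Z)$, contradicting the hypothesis. But nothing in the definition of an $\alpha$-full configuration, nor in the ladder, gives $\tilde p(\tilde a)\neq 0$. The map $H_{\Kh}^{0,s-1}(K;\F_2)\to\tilde H_{\Kh}^{0,s-2}(K;\F_2)$ can — and under the very hypotheses you assume, must — kill $p(a)$: if $\tilde H_{\Kh}^{0,s-2}(K;\Z)$ has no $2$-torsion then $\im\Sq^1$ is zero there, so $\tilde p(\tilde a)\in\im\Sq^1$ automatically implies $\tilde p(\tilde a)=0$. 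No contradiction ensues; you have only produced a reduced $\alpha$-full configuration at $q=s-2$, which is the trivial case and carries no information (it is exactly what $\tilde s^{\Sq^1}_+\geq s^{\F_2}(K)$ always allows). Relatedly, the sentence ``thus $s-2$ would be reduced $\Sq^1$-full'' suggests you are looking for non-triviality at the wrong quantum degree: non-triviality of $\tilde s^{\Sq^1}_+$ requires a configuration at $q=s$, not $q=s-2$.

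The paper's proof runs the other way and is direct rather than by contradiction. It fixes $B=\im\Sq^1\subset H_{\Kh}^{0,s-1}(K;\F_2)$, identifies $B$ with the mod-$2$ reduction of the $\Z/2\Z$-summands $\overline B$ of $H_{\Kh}^{0,s-1}(K;\Z)$, and then uses the long exact sequence, the absence of $2$-torsion in $\tilde H_{\Kh}^{0,s-2}(K;\Z)$ and in $\tilde H_{\Kh}^{0,s}(K;\Z)$, and $\beta=\pm2 d_1$ to show that every $b\in\overline B$ has mod-$2$ reduction in the image of the reduced $d_{1,\F_2}$; the splitting of Theorem \ref{thm:splitspec} then places $b_2$ in the image of the unreduced first differential $\bar d_{1,\F_2}$, so $b_2$ dies in $H^0(\CBN(K)/\mathcal{F}_{s+1};\F_2)$, and \cite[Prop.6.3(2)]{MR4244204} gives triviality of $s^{\Sq^1}_+$. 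In other words: the actual content of the theorem is precisely that $\im\Sq^1$ in bidegree $(0,s-1)$ is killed by the spectral sequence — which is the opposite of what your contradiction requires. To repair your argument you would need a different terminal step: rather than asserting $\tilde p(\tilde a)\neq 0$, you should deduce $\tilde p(\tilde a)=0$ (using both torsion hypotheses, via $\beta=\pm2d_1$) and then convert that into a statement that the unreduced configuration cannot exist; but this is essentially redoing the paper's direct argument, and you would still need a characterization of triviality analogous to \cite[Prop.6.3(2)]{MR4244204}, which is not derived in your sketch.
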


\begin{proof}
Let $B = \im(\Sq^1\colon H_{\Kh}^{-1,s-1}(K;\F_2) \to H_{\Kh}^{0,s-1}(K;\F_2))$. Since $\Sq^1$ is simply a Bockstein homomorphism, naturality of Bockstein homomorphisms implies that $B=p(\overline{B})$, where $p\colon H_{\Kh}^{0,s-1}(K;\Z)\to H_{\Kh}^{0,s-1}(K;\F_2)$ is change of coefficients, and $\overline{B}$ is the subgroup of $H_{\Kh}^{0,s-1}(K;\Z)$ generated by all $\Z/2\Z$ direct summands in $H^{0,s-1}_{\Kh}(K;\Z)$.

Consider the long exact sequence
\[
\tilde{H}_{\Kh}^{-1,s-2}(K;\Z) \stackrel{\beta}{\longrightarrow} \tilde{H}_{\Kh}^{0,s}(K;\Z) \stackrel{i}{\longrightarrow}H_{\Kh}^{0,s-1}(K;\Z)\longrightarrow \tilde{H}_{\Kh}^{0,s-2}(K;\Z)
\]
and some $b\in \overline{B}$. Since $\tilde{H}_{\Kh}^{0,s-2}(K;\Z)$ does not contain elements of order $2$, we get $b = i(\tilde{b})$ for some $\tilde{b}\in \tilde{H}_{\Kh}^{0,s}(K;\Z)$. Then $2\tilde{b}\in \ker i$, and there is an $\tilde{a}\in \tilde{H}_{\Kh}^{-1,s-2}(K;\Z)$ with $\beta(\tilde{a}) = 2\tilde{b}$. By Lemma \ref{lm:firstdiff} $d_1(\tilde{a}) \pm \tilde{b}$ is an element of order $2$. As we assume $\tilde{H}_{\Kh}^{0,s}(K;\Z)$ to not have $2$-torsion, we have $d_1(\tilde{a})=\pm \tilde{b}$.

If we denote $\tilde{b}_2=p(\tilde{b})$, $b_2\in p(b)$, where $p$ is still change of coefficients from $\Z$ to $\F_2$, and we further write $d_{1,\F_2}\colon E^{-1,s-2}_{1,\F_2} \to E^{0,s}_{1,\F_2}$ for the differential in the spectral sequence, we get that $\tilde{b}_2$ is in the image of $d_{1,\F_2}$.

Since the unreduced spectral sequence with coefficients in $\F_2$ splits into two copies of the reduced version by Theorem \ref{thm:splitspec}, we also get that $b_2$ is in the image of $\bar{d}_{1,\F_2}\colon H^{-1,s-3}_{\Kh}(K;\F_2)\to H^{0,s-1}_{\Kh}(K;\F_2)$.

Then $b_2$ is send to $0$ in $H^0(\mathcal{F}_{s-3}/\mathcal{F}_{s+1};\F_2)$, the next term in the long exact sequence having $\bar{d}_{1,\F_2}$ as connecting homomorphism, and therefore it is also send to $0$ in $H^0(\CBN(K)/\mathcal{F}_{s+1};\F_2)$. Since every element of $B$ can be written as a $b_2$, we get that $s^{\Sq^1}_+$ is trivial for $K$ by \cite[Prop.6.3(2)]{MR4244204}.
\end{proof}

\begin{corollary}
Let $K$ be a $1$-thin knot. Then $s^{\Sq^1}_+$ is trivial for $K$.\hfill\qed
\end{corollary}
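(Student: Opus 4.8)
The plan is to obtain this as an immediate consequence of the preceding Theorem, whose hypothesis is that $\tilde{H}_{\Kh}^{0,q}(K;\Z)$ is free of $2$-torsion for $q=s$ and $q=s-2$, where $s=s^{\F_2}(K)$; under that hypothesis one concludes $s^{\Sq^1}_+(K)=s$, i.e.\ $s^{\Sq^1}_+$ is trivial for $K$. So the task reduces to verifying that a $1$-thin knot satisfies this hypothesis, and in fact it does so in a much stronger form.

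First I would observe that for a knot $K$ of crossing number $c$ the reduced Khovanov homology $\tilde{H}_{\Kh}^{i,q}(K;\Z)$ vanishes outside the homological range $-c\le i\le c$ (for a diagram with $n_-$ negative and $n_+$ positive crossings the support lies in $-n_-\le i\le n_+$, and $n_-+n_+=c$). Consequently the condition that $K$ be $1$-thin --- all non-trivial $\tilde{H}_{\Kh}^{i,q}(K;\Z)$ with $-c\le i\le c$ lying on a single diagonal and being torsion free --- already pins down every bidegree, so the whole group $\tilde{H}_{\Kh}^{\ast,\ast}(K;\Z)$ is torsion free.

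Then I would feed this into the Theorem: in particular $\tilde{H}_{\Kh}^{0,q}(K;\Z)$ contains no element of order $2$ for any $q$, so the hypothesis holds for $q=s,s-2$, and the Theorem yields $s^{\Sq^1}_+(K)=s^{\F_2}(K)$. By the definition of non-triviality this is exactly the assertion that $s^{\Sq^1}_+$ is trivial for $K$.

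I do not expect any real obstacle. The only step that warrants a moment of care is the homological-degree bound on the support of reduced Khovanov homology, which guarantees that $1$-thinness --- stated a priori only on the window $[-c,c]$ of homological degrees --- in fact forces torsion-freeness in all degrees; everything else is formal.
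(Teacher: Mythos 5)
Your proposal is correct and is essentially the paper's own intended argument: the corollary is stated with an immediate \qed, meaning it is simply the preceding Theorem applied to the observation that $1$-thinness forces $\tilde{H}_{\Kh}^{\ast,\ast}(K;\Z)$ to be entirely torsion free (since the support lies within homological degrees $-c$ to $c$), hence in particular $\tilde{H}_{\Kh}^{0,q}(K;\Z)$ has no $2$-torsion for $q=s,s-2$. Your careful remark about the homological-degree bound of the support is the right point to check, and it goes through.
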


\section{Computations}

\subsection{Prime knots with up to 18 crossings}

We use the knot tables from \cite{MR4117738} to calculate $s^\Z(K)$ for every prime knot with up to $18$ crossings. The computations were done with the latest version of \verb+knotjob+, which is available from the author's website.

For all of these knots $K$ we get $\gl(K)\leq 1$, and there are only around 18,500 knots with $\gl(K) = 1$. Furthermore, up to $17$ crossings we get $\gl(K) = 1$ is equivalent to $s^\Z(K) = (s^\Q(K),2)$ and $s^{\F_2}(K) \not= s^\Q(K)$.

For knots with $18$ crossings there are a few exceptions. Firstly, there are two knots, $18^{nh}_{5566876}$ and the mirror of $18^{nh}_{37144251}$, for which $s^\Z(K) = (s^\Q(K),3)$. In both of these cases $s^{\F_3}(K) = 0$, while $s^\Q(K)\not=0$. See Figure \ref{fig:knotf3} for a diagram of $18^{nh}_{5566876}$.

\begin{figure}[ht]
\includegraphics[width = 5cm]{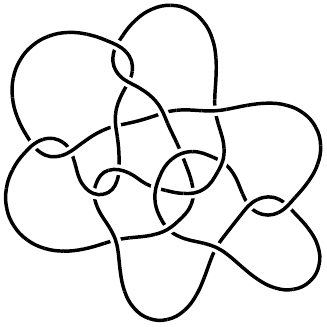}
\caption{\label{fig:knotf3}The knot $K=18^{nh}_{5566876}$, one among only two knots with at most $18$ crossings and $s^{\F_3}(K)\not=s^\Q(K)$.} 
\end{figure}

Secondly, there are $25$ knots $K$ for which $s^{\Z}(K) = (s^\Q(K), 2)$, yet $s^{\F_2}(K) = s^{\Q}(K)$. The knot $18^{nh}_{9772775}$ is the one with the smallest hyperbolic volume among them. Interestingly, in all of these cases we get $s^\Q(K) = 2$, so that we do not get an improvement on the slice genus from Theorem \ref{thm:genusbound}. On the other hand, connected sum of such a knot with an appropriate trefoil knot gives an example of a knot with $\gl(K) = 1$ and $s^\F(K)=0$ for every field $\F$.

\begin{proof}[Proof of Theorem \ref{thm:firstmain}]
The knot $K=18^{nh}_{23345595}$ is another one of the $25$ knots with $s^\Q(K)=s^{\F_p}(K)$ for all $p$, and $s^\Z(\overline{K}) = (2,2)$. Hence $s^\Q(K\# T(2,3)) = s^{\F_p}(K\# T(2,3)) = 0$ for all $p$, and a computation shows that $s^\Z(\overline{K\# T(2,3)}) = (0,2)$. Another calculation shows that the signature of this knot is also $0$, and the determinant is equal to $45^2$.
\end{proof}

Connected sums give convenient ways to produce knots with $\gl(K) > 1$. For example, $s^\Z(\overline{14^{ns}_1} \# \overline{14^{ns}_1}) = (0, 2, 2)$. Here we use the notation of \cite{MR4117738}, and $14^{ns}_1$ is a Whitehead double of the trefoil. In \cite[\S 4]{iltgen2021khovanov} it is shown that $14^{ns}_1$ contains a {\em staircase complex} $S_1$. This complex is in fact the dual of the complex in Example \ref{ex:staircase}. There exist staircase complexes $S_n$ for $n\geq 1$ and it is shown in \cite[Lm.4.3]{iltgen2021khovanov} that $S_{n+1}$ contains $S_1\otimes S_n$ as a direct summand. From the particular form of the staircase complex $S_n$ it now follows easily that
\begin{align*}
\gl\left((\overline{14^{ns}_1})^{\# n}\right) &= n \\
s^\Z\left((\overline{14^{ns}_1})^{\# n}\right) &=  (0,2,\ldots,2)
\end{align*}
for $n\geq 2$.

From the K\"unneth formula we get that
\[
\gl(K_1\# K_2) \leq \gl(K_1)+\gl(K_2)
\]
for any knots $K_1, K_2$. Since $\gl(K \# \overline{K}) = 0$ by Theorem \ref{thm:sliceinv} we cannot get equality here. For a more unusual example we calculated that $\gl(\overline{14^{ns}_1}\# \overline{18^{nh}_{9772775}}) = 1$, and, in fact, $s^\Z(\overline{14^{ns}_1}\# \overline{18^{nh}_{9772775}}) = (2, 4)$.
\subsection{Whitehead doubles}

Knots for which $\gl(K)>0$, and hence where the $s$-invariant can differ over different fields, appear to be rather rare from the above calculations. On the other hand, among the $380$ satellite knots with up to $19$ crossings listed in \cite{MR4117738} there are $85$ which have $\gl(K) = 1$ (and all of these satisfy $s^{\F_2}(K) \not=s^{\Q}(K)$). 

The one with the least number of crossings is a Whitehead double of the trefoil, also known as $14^n_{22180}$ in knotscape notation. Furthermore, the first discovered example of a knot with $s^{\F_3}(K) \not=s^{\Q}(K)$ is also a Whitehead double (of the $(3,4)$-torus knot), see \cite{LewarkZib}.

It seems therefore natural to investigate Whitehead doubles in general. Calculation times can vary greatly, already for companion knots with small number of crossings, so our investigation is not as thoroughly as we would have liked. However, we get a few more cases where $s^\Z(K)$ differs from $s^\Q(K)$ and the cyclic group has more than $3$ elements. We list these examples in the following table.
\begin{table}[h]
\begin{tabular}{c|c|c}
Knot $K$ & $s^{\Z}(K)$  & $s^{\Z}(\overline{K})$ \\
\hline
\hline
$W_+(5_1, 6)$ & $0$ & $(0,4)$ \\
\hline
$W_+(7_3, 6)$ & $0$ & $(0,4)$ \\
\hline
$W_+(7_5, 6)$ & $0$ & $(0,4)$ \\
\hline
$W_+(9_3, 9)$ & $0$ & $(0,8)$ \\
\hline
$W_+(10_{124}, 11)$ & $0$ & $(0,6)$ \\
\hline
$W_+(T(2,7), 9)$ & $0$ & $(0,8)$ \\
\hline
$W_+(T(2,9), 12)$ & $0$ & $(0,16)$ \\
\hline
$W_+(T(2,47), 69)$ & $0$ & $(0, 2^{23})$ 
\end{tabular}
\caption{\label{tab:one}The integral $s$-invariant of Whitehead doubles of some knots from Rolfsen's table.}
\end{table}
We should point out that we only sporadically checked knots with at least $8$ crossings, so there may be more interesting examples coming from the Rolfsen table. Notice that $10_{124}$ is the torus knot $T(3,5)$, so Whitehead doubles of torus knots appear to be particularly interesting. They also appear computationally more feasible.

The last three examples suggest the following conjecture.

\begin{conjecture}\label{con:whitetorus}
Let $K$ be the mirror of $W_+(T(2,2n+1), 3n)$ with $n$ a positive integer. Then
\[
s^\Z(K) = (0, 2^n).
\]
\end{conjecture}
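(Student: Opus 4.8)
\textbf{Proof proposal for Conjecture \ref{con:whitetorus}.}

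The plan is to identify the reduced Bar-Natan complex of $K=\overline{W_+(T(2,2n+1),3n)}$ (up to filtered chain homotopy equivalence) with a direct summand that is a suitable staircase-type complex whose $E_\infty$-page produces exactly one surviving finite cyclic group, of order $2^n$, in homological degree $0$. Concretely, I would first treat the base case: from Table \ref{tab:one} and the computations of \cite{iltgen2021khovanov}, the knot $W_+(T(2,3),3)=14^{ns}_1$ has reduced Bar-Natan complex containing the staircase complex $S_1$, which is the dual of Example \ref{ex:staircase} with $p=2$; hence $s^\Z(\overline{W_+(T(2,3),3)}) = (0,2)$, establishing $n=1$. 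The heart of the argument is then an inductive/structural statement: the reduced Khovanov chain complex of $W_+(T(2,2n+1),3n)$ contains, as a filtered direct summand, a staircase complex $S_n$ in the sense of \cite[\S 4]{iltgen2021khovanov}, with the remaining summands contributing nothing to $\tBN^0(K;\Z)^{(q)}$ for $q<s^\Q=0$ (and the whole thing being graded-$p$-torsion free for odd $p$, consistent with the observed $s^{\F_p}=s^\Q=0$).

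The key steps, in order, would be: (1) Compute or recall $s^\Q(W_+(T(2,2n+1),3n))$; the twisting parameter $3n$ is chosen (as in the $n=1,\dots$ examples of the table and the discussion preceding Conjecture \ref{con:whitetorus}) precisely so that $s^\Q=0$ and hence $s^\Q(K)=0$ for the mirror, fixing the first entry of $s^\Z(K)$. (2) Use the behaviour of Khovanov homology under the Whitehead-doubling satellite operation together with the known reduced Khovanov homology of $T(2,2n+1)$ to show that a staircase complex $S_n$ appears as a direct summand of $\tCBN^\ast(W_+(T(2,2n+1),3n);\Z)$; here I would lean on the Bar-Natan/tangle-cobordism formalism and on \cite[Lm.4.3]{iltgen2021khovanov}, which gives $S_{n+1}\supseteq S_1\otimes S_n$, combined with the connected-sum-type decomposition the doubling induces as $n$ grows. (3) Analyze the spectral sequence of $S_n$: by the explicit form of the staircase complex, all but one differential cancels the free part, leaving $E_\infty^{0,0}=\Z$ and a single surviving $E_\infty^{0,-2l}$ which is cyclic; tracking the coefficients (each staircase step contributing a factor of $2$, exactly as in Example \ref{ex:staircase} iterated $n$ times) gives order $2^n$ and $\gl(K)=n$. (4) Pass to the mirror: since $\tCBN^\ast(\overline D;\Z)$ is the filtered dual of $\tCBN^\ast(D;\Z)$, and dualizing a staircase complex yields a staircase complex with the same torsion orders, the associated graded groups are unchanged, so $s^\Z(K)=(0,2^n)$.

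The main obstacle is step (2): rigorously exhibiting the staircase summand $S_n$ for \emph{all} $n$ rather than for the finitely many verified cases. Establishing that a single staircase complex $S_n$ (and not, say, $S_{n-1}\oplus(\text{small})$ or a more complicated staircase) splits off requires understanding how the Bar-Natan complex of the pattern tangle interacts with the reduced Khovanov homology of the companion $T(2,2n+1)$ across all twisting numbers, and controlling the ``error terms'' so they neither contribute extra $E_\infty^{0,q}$ for $q<0$ nor introduce odd torsion. One plausible route is to set up an induction on $n$: assume the statement for $T(2,2n-1)$ and pass to $T(2,2n+1)$ by adding a full twist to the companion, tracking the effect on the doubled knot's complex via a long exact sequence or an unoriented skein exact triangle, and use \cite[Lm.4.3]{iltgen2021khovanov} to feed the extra staircase factor $S_1$ into the induction — but making the filtration bookkeeping precise enough to conclude the coefficient is exactly $2^n$ (rather than $2^n$ times a possible extra unit or a divisor) is where the real work lies.
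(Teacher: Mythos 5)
This statement is a \emph{conjecture} in the paper, not a theorem: the paper offers no proof. It is presented as open, supported only by computations for small $n$ and by heuristic arguments tying it to \cite[Conj.3.14]{MR3894728}; the paper explicitly flags that even if the expected $2^n$-torsion is present in the reduced Khovanov homology of the right bi-degree, ``one still needs to check this torsion survives to the $E_\infty$ page of the spectral sequence.'' So there is no paper proof to compare against.

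Your proposal is a sensible \emph{strategy}, and it aligns with the evidence the paper gathers (the staircase complexes of \cite{iltgen2021khovanov}, the link to \cite{MR3894728}, the behaviour of twisting). But it is not a proof, and you honestly identify the critical gap yourself: step (2), establishing for all $n$ that a staircase summand $S_n$ splits off the filtered reduced Bar-Natan complex and that the rest contributes nothing in homological degree $0$. That is precisely the open part the paper leaves unresolved. Also note the paper's own discussion of this conjecture warns that the passage from ``torsion in reduced Khovanov homology'' to ``torsion surviving on the $E_\infty$ page'' is nontrivial, which affects your step (3) as well.

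There is, moreover, an internal inconsistency in your step (3). You conclude ``order $2^n$ and $\gl(K)=n$,'' but the conjecture asserts $s^\Z(K) = (0, 2^n)$, which by the paper's definition of $s^\Z$ means $\gl(K) = 1$ and $\tBN^0(K;\Z)^{(-2)} \cong \Z/2^n\Z$. If $\gl(K)=n>1$ the tuple would have $n+1$ entries, e.g.\ $(0,1,\dots,1,2^n)$ in your scenario. The staircase complex you need must therefore concentrate a single cyclic group of order $2^n$ at $q$-degree $s^\Q-2 = -2$, not stretch a $\Z/2$ across $q$-degrees $-2,\dots,-2n$. The latter is exactly what the connected sums $(\overline{14^{ns}_1})^{\# n}$ do (the paper computes $\gl=n$ and $s^\Z=(0,2,\dots,2)$ for them), and one of the points of $s^\Z$ is that it distinguishes these two behaviours. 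As written, your argument conflates them, so even modulo the acknowledged gap in step (2), step (3) would not yield the stated conjecture.
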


Conjecture \ref{con:whitetorus} appears to be closely related to \cite[Conj.3.14]{MR3894728}, where it is conjectured that the flat $2$-cabling of $T(2,2n+1)$ contains torsion of order $2^n$ in its Khovanov homology. While this conjecture is formulated for unreduced Khovanov homology, computations show one can extend it to reduced Khovanov homology.

The reduced Khovanov homology of the Whitehead double fits in a long exact sequence with two of the links from \cite{MR3894728}, so it is unsurprising that it also contains torsion of large powers of $2$ (although this is not automatic). Adding twists in the Whitehead double is going to preserve this torsion, but shifts it into different bi-degrees, compare \cite[Thm.3.1]{MR3894728}. Computations for low values of $n$ then show that the Whitehead doubles described in Conjecture \ref{con:whitetorus} have this torsion in homological degree $0$. One still needs to check this torsion survives to the $E_\infty$ page of the spectral sequence.

Returning to the observation of \cite{LewarkZib} that $K=W_+(T(3,4),8)$ satisfies $s^\Q(K)\not=s^{\F_3}(K)$, together with the analogous statement holding for $W_+(T(2,3),3)$, we now arrive at the following conjecture, which has also been conjectured by Lewark and Zibrowius.

\begin{conjecture}
Let $p$ be a prime. For $K = W_+(T(p,p+1), p^2-1)$ and $q$ a prime different from $p$ we have
\[
s^\Q(K) = s^{\F_q}(K) \not= s^{\F_p}(K).
\]
\end{conjecture}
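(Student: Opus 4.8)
The plan is to split the conjecture into the two assertions $s^\Q(K)=s^{\F_q}(K)$ for every prime $q\ne p$ and $s^{\F_p}(K)\ne s^\Q(K)$, and to deduce both from a single statement about the reduced Bar-Natan complex of $K=W_+(T(p,p+1),p^2-1)$: that every torsion element in the associated graded groups $\tBN^0(K;\Z)^{(j)}$ and $\tBN^0(\overline{K};\Z)^{(j)}$ has $p$-power order, and that at least one of these groups is a non-trivial $p$-group. Granting this, Lemma \ref{lm:sinequ} applied to $K$ and to $\overline{K}$, together with $s^\F(\overline{K})=-s^\F(K)$, yields $s^\Q=s^{\F_q}$ for $q\ne p$, while a surviving copy of $\Z/p\Z$ in some $\tBN^0(\overline{K};\Z)^{(s^\Q(\overline{K})-2l)}$ forces $\gl(\overline{K})>0$ and hence, via the mirror relation, $s^{\F_p}(K)\ne s^\Q(K)$. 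Thus the whole conjecture reduces to understanding the torsion of the reduced Bar-Natan complex of this particular twisted Whitehead double, in exactly the way the $p=2$ case is governed by Conjecture \ref{con:whitetorus}.

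The first step is to pin down $s^\Q(K)$. Since the Whitehead pattern has Seifert genus $1$ and the companion is non-trivial, $g_3(K)=1$, so $g_4(K)\le 1$ and $|s^\Q(K)|\le 2$; for $p=2,3$ the computations above give $s^\Q(K)=0$, and in general I would prove $s^\Q(K)=0$ by resolving the clasp of $W_+(T(p,p+1),p^2-1)$ in the unoriented skein triangle, which relates $K$ to the unknot and to the flat $2$-cable of $T(p,p+1)$, the twist parameter $p^2-1$ being calibrated precisely so that the positive and negative contributions balance. This part is of a classical flavour and I do not expect it to be the obstruction.

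The heart of the matter is the torsion claim, and the strategy is to mimic the sketch given after Conjecture \ref{con:whitetorus}. The reduced Khovanov homology of $K$ fits into an unoriented skein long exact sequence with the reduced Khovanov homologies of the flat $2$-cable of $T(p,p+1)$ and a related link, and what is needed is a $p$-analogue of \cite[Conj.3.14]{MR3894728}: that this cable carries $p$-power torsion of a specific (large) order. Granting such a statement, the long exact sequence forces $p$-power torsion in $\tilde{H}_{\Kh}^{0,\ast}(K;\Z)$, the choice of twist $p^2-1$ being exactly what puts the relevant generator in homological degree $0$ (compare the degree shift in \cite[Thm.3.1]{MR3894728}). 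One then has to run the reduced integral spectral sequence and verify (i) that this $p$-torsion class survives to the $E_\infty$-page, for which Lemma \ref{lm:firstdiff} and an analysis of the higher differentials $d_r$ around homological degree $0$ would be used, and (ii) that no $q$-torsion with $q\ne p$ survives in $\tBN^0$, which should follow from a sufficiently explicit description of the Khovanov homology of the cable (or from a thinness estimate away from homological degree $0$). Over $\F_p$ the surviving class then shifts $s^{\F_p}_{\max}$, so with $s^\Q(K)=0$ from the previous step one gets $s^{\F_p}(K)\ne 0$; when $p=2$ this uses the splitting of Theorem \ref{thm:splitspec} and Remark \ref{rem:gradedreduced}. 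A more structural route to the same end would be to show that $\overline{K}$ contains, up to filtered chain homotopy equivalence, a ``$p$-staircase'' summand -- the evident generalisation, with $p$ in place of $2$, of the complex of Example \ref{ex:staircase} and of the staircase complexes $S_n$ of \cite{iltgen2021khovanov} -- which would simultaneously compute $\gl$ and rule out other torsion in $\tBN^0$.

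The main obstacle is clearly the third step. The required torsion statement for the flat $2$-cable of $T(p,p+1)$ already generalises an open conjecture of \cite{MR3894728}, and even granting it one must control the entire spectral sequence -- keeping the $p$-torsion alive in homological degree $0$ while killing all spurious $q$-torsion there -- which for $p=2$ (Conjecture \ref{con:whitetorus}) and $p=3$ (\cite{LewarkZib}) has so far only been checked computationally. Absent a structural handle on the Bar-Natan complexes of these cables, such as an explicit identification with shifted staircase complexes, this remains a program rather than a proof, much as with Conjecture \ref{con:bigconj}.
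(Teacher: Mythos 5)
This statement is a conjecture in the paper, not a theorem: the paper offers no proof, only the computational evidence for $p=2,3$ (the Whitehead doubles of $T(2,3)$ and $T(3,4)$) and the heuristics sketched around Conjecture~\ref{con:whitetorus}. Your proposal is, as you yourself say, a program in the same spirit rather than a proof, and most of it is a fair account of the obstacles the paper also identifies. But there is one step in your reduction that is actually false, and the paper itself contains the counterexample.

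You claim that a surviving copy of $\Z/p\Z$ in some $\tBN^0(\overline{K};\Z)^{(s^\Q(\overline{K})-2l)}$ ``forces $\gl(\overline{K})>0$ and hence, via the mirror relation, $s^{\F_p}(K)\ne s^\Q(K)$.'' Only the first half is right. Lemma~\ref{lm:sinequ} gives one implication (graded $p$-torsion free $\Rightarrow s^{\F_p}\geq s^\Q$), but its converse fails: graded $p$-torsion in $\tBN^0(\cdot;\Z)$ does not force $s^{\F_p}$ to drop. Theorem~\ref{thm:firstmain} exhibits a knot with $\gl=1$ and $s^\F=0$ for every field $\F$, and Section~6 lists $25$ eighteen-crossing knots with $s^\Z(K)=(s^\Q(K),2)$ yet $s^{\F_2}(K)=s^\Q(K)$. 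The point is that over $\F_p$ the Universal Coefficient Theorem contributes $\Tor$ terms to $H^0(\tilde{\mathcal{F}}_q;\F_p)$, and these can supply a class at filtration level $s^\Q$ hitting $\tBN^0(K;\F_p)$ even when no integral class at that level does; the $\Z$-torsion alone does not see this. So your reduction of the conjecture to ``$p$-power torsion only, with a nontrivial $p$-group somewhere'' does not suffice. What \emph{would} suffice is your alternative route: exhibiting a filtered direct summand of $\tCBN^\ast(\overline{K};\Z)$ equivalent to the $p$-staircase of Example~\ref{ex:staircase} and carrying all of $\tBN^0$, since for that specific local model the $\F_p$-class at the top filtration level is genuinely killed by a $d_1$ while the $\Q$-class is not. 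You should make that the primary claim, dropping the weaker torsion statement, and note explicitly that mere graded $p$-torsion is not enough. Beyond this, the remaining obstacles you list (the $p$-analogue of \cite[Conj.3.14]{MR3894728}, running the spectral sequence, pinning down $s^\Q(K)=0$) match the paper's own assessment of why this remains open.
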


This conjecture would prove Conjecture \ref{con:bigconj}.

In terms of $s^\Z(K)$ we can also look at non-prime integers, and state the following conjecture.

\begin{conjecture}
Let $n\geq 2$. For $K = W_+(T(n,n+1), n^2-1)$ we have
\[
s^\Z(\overline{K}) = (0,n).
\]
\end{conjecture}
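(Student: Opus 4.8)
The plan is to establish the three numbers encoded in $s^\Z(\overline K)=(0,n)$ in turn: that $s^\Q(\overline K)=0$, that $\gl(\overline K)=1$, and that the unique finite cyclic group that appears is $\Z/n\Z$. For the first, one checks $s^\Q(K)=0$ (equivalently $s^\Q(\overline K)=0$): a crossing change inside the clasp of the Whitehead pattern turns $K$ into the unknot, so $g_4(K)\le 1$ and $s^\Q(K)\in\{-2,0,2\}$, while the framing $n^2-1=(n-1)(n+1)$ is at least $2g(T(n,n+1))=n(n-1)=2\tau(T(n,n+1))$; feeding this into the computation of $\tau$ for twisted Whitehead doubles together with the standard cobordism inequalities for $s$ forces $s^\Q(K)=0$. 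By Remark~\ref{rem:gradedreduced} this already gives $\tBN^0(\overline K;\Z)^{(0)}=\Z$ and $\tBN^0(\overline K;\Z)^{(q)}=0$ for $q>0$, so the whole problem now lives in filtration degrees $q\le-2$.

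The core of the argument is to identify $\tBN^\ast(\overline K;\Z)$ in those degrees via reduced Khovanov homology and its spectral sequence. As noted in the excerpt, the reduced Khovanov homology of $\overline K$ sits in a skein long exact sequence with two links of the type studied in \cite{MR3894728}; in the spirit of \cite[Conj.~3.14]{MR3894728} --- which treats the flat $2$-cabling of $T(2,2n+1)$ and has been checked computationally in low cases here --- these links should carry torsion of order exactly $n$, and inserting twists into the double shifts that torsion into controlled bidegrees as in \cite[Thm.~3.1]{MR3894728}. Chasing bidegrees through the exact sequence and the twist shift, the goal is to show that near homological degree $0$ the reduced Khovanov complex of $\overline K$ contains, as a filtered direct summand, a copy of the dual of the complex in Example~\ref{ex:staircase} with $p$ replaced by $n$, while the rest of the relevant strip consists of free generators that cancel in pairs under the spectral-sequence differentials (as around Lemma~\ref{lm:easytorfree}). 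This would produce $\gl(\overline K)=1$, $\tBN^0(\overline K;\Z)^{(-2)}=\Z/n\Z$, and $\tBN^0(\overline K;\Z)^{(q)}=0$ for $q<-2$ simultaneously. Note this is a single long staircase: the connected-sum examples $(\overline{14^{ns}_1})^{\# n}$ above instead give $n$ separate copies of $\Z/2\Z$, so the phenomenon here is not a tensor power.

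The main obstacle is precisely this torsion step. It depends on the torsion conjecture of \cite{MR3894728}, suitably generalized (to reduced Khovanov homology, to general $n$, and to the twisted cables that actually occur), which is itself open; and even granting that torsion of order exactly $n$ is present in the cable, one must show it passes through the skein triangle into homological degree $0$ of $\overline K$ and then persists to the $E_\infty$-page of the reduced Bar-Natan spectral sequence rather than being cancelled by a differential. A plausible route is a direct filtered tangle computation: glue the Bar-Natan complex of the Whitehead-double $(1,1)$-tangle to that of the $(1,1)$-tangle of $T(n,n+1)$, simplify by filtration-preserving Gauss elimination, and prove by induction on $n$ that the reduction always leaves the parameter-$n$ dual staircase as a direct summand --- an argument in the spirit of \cite[Lm.~4.3]{iltgen2021khovanov}, but which must now be made uniform in $n$ rather than verified case by case. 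Controlling that induction, together with the bidegree bookkeeping inherited from \cite{MR3894728}, is where the real work lies.
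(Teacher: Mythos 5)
This statement is a \emph{conjecture} in the paper, not a theorem: the paper offers no proof, only direct computation of $s^\Z$ for $n<4$, partial evidence (from $s^\F$ in small characteristic, $s^{\Sq^1}$, and $\tilde H_{\Kh}$) suggesting the case $n=4$, and a heuristic discussion in the surrounding text tying the conjecture to \cite[Conj.~3.14]{MR3894728} via a skein long exact sequence and the twist-shift of \cite[Thm.~3.1]{MR3894728}. Your proposal tracks that heuristic accurately --- the skein triangle with the flat $2$-cable, the conjectural order-$n$ torsion, the shift into homological degree $0$, the survival to $E_\infty$, and the expected dual staircase summand --- and you are candid that the whole chain rests on open conjectures. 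So what you have written is a reasonable research plan in the same spirit as the paper's discussion, but it is not a proof, and the paper does not contain one either.

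Two further gaps deserve to be named. First, your derivation of $s^\Q(K)=0$ is not as tight as stated. A single crossing change in the positive clasp of $W_+$ gives the unknot, which (accounting for signs) only pins $s^\Q(K)$ to $\{0,2\}$, and excluding the value $2$ requires a vanishing result for $s$ itself, not for $\tau$. Hedden's theorem that $\tau(W_+(J,t))=0$ once $t\ge 2\tau(J)$ does not transfer to $s$ by ``standard cobordism inequalities''; indeed the delicate behaviour of $s$ on exactly these Whitehead doubles is the subject of the paper. Second, even granting order-$n$ torsion in $\tilde H_{\Kh}^{0,\ast}(\overline K;\Z)$, concluding $s^\Z(\overline K)=(0,n)$ with $\gl(\overline K)=1$ requires identifying a parameter-$n$ dual staircase as a \emph{filtered direct summand} and showing nothing else survives to $E_\infty$ in lower filtration; the paper explicitly notes this is ``not automatic'' and is out of computational reach for $n>4$. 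Your proposed induction via filtered tangle gluing is a sensible idea not present in the paper, but it would itself need a uniform-in-$n$ strengthening of \cite[Lm.~4.3]{iltgen2021khovanov}, which has only been verified case by case for small $n$ and low crossing number.
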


Computationally this has been checked for $n< 4$, for $n=4$ calculations of $s^\F$ where $\F$ has small characteristic, $s^{\Sq^1}$ and $\tilde{H}_{\Kh}$ suggest $(0,4)$. Ffor $n>4$ this seems out of reach for computations.

\subsection{The $\Sq^1$-refinement of $s^{\F_2}$}

In a previous paper \cite{MR4244204} we calculated the $\Sq^1$-refinements for all knots up to $16$ crossings, and also a few larger knots.  For all of our calculations the following two statements about a knot $K$ were equivalent.
\renewcommand{\theenumi}{S.\arabic{enumi}}
\begin{enumerate}
\item \label{enu:one}We have $s^{\F_2}(K) \not= s^\Q(K)$.
\item We have $r^{\Sq^1}_+(K)$ or $r^{\Sq^1}_-(K)$ non-trivial.\label{enu:two}
\end{enumerate}
Furthermore, we also always observed
\begin{enumerate}
\setcounter{enumi}{2}
\item We have $r^{\Sq^1}_+(K)$ non-trivial if and only if $s^{\Sq^1}_+(K)$ non-trivial.\label{enu:three}
\item If $r^{\Sq^1}_+(K)$ is non-trivial, then $r^{\Sq^1}_-(K)$ is trivial.\label{enu:four}
\end{enumerate}

There do not seem to be algebraic reasons why this should be true, and, in fact, we now have examples showing that (\ref{enu:one}) and (\ref{enu:two}) are independent of each other.

For any of the $25$ knots with $s^{\Z}(K) = (s^\Q(K), 2)$, yet $s^{\F_2}(K) = s^{\Q}(K)$ we have (\ref{enu:two}) is true. Also, for all the knots in Table \ref{tab:one} with last column $(0,4)$ we have (\ref{enu:two}) is false, while (\ref{enu:one}) is true.

There seems to be a relation between having torsion of order $2^k$ with $k>1$ in $s^\Z$, and $r^{\Sq^1}$ being trivial. After all, the first Steenrod square does not detect such torsion. On the other hand, the $\Sq^1$-refinements look at the $E_1$ page of the spectral sequence, while $s^\Z$ looks at the $E_\infty$ page, and algebraically there does not seem to be a direct link.

We still do not have counterexamples for (\ref{enu:three}) and (\ref{enu:four}) after checking knots with up to $17$ crossings. 

\bibliography{KnotHomology}
\bibliographystyle{amsalpha}

\end{document}